\newtheorem{thm}{Theorem}[section]
\newtheorem{lemma}[thm]{Lemma}
\newtheorem{ques}[thm]{Question}
\newtheorem{defn}[thm]{Definition}
\theoremstyle{remark}
\newtheorem{rem}[thm]{Remark}
\newcommand{\norm}[1]{\left\Vert #1\right\Vert}
\def \N {\mathbb N}
\def \C {\mathbb C}
\def \Z {\mathbb Z}
\def \R {\mathbb R}
\def \Q {\mathbb Q}
\def \E {\mathbb E}
\def\B {\mathcal B}
\def \X {\mathcal{X}}
\def \Y {\mathcal{Y}}
\def \a {\alpha }
\def \ep {\epsilon}
\def \d {\delta}
\numberwithin{equation}{section}
\begin{document}

\title[]{pointwise convergence of some continuous-time polynomial ergodic averages}
	
	\author[]{Wen Huang, Song Shao, and Rongzhong Xiao}
	
	\address[Wen Huang]{School of Mathematical Sciences, University of Science and Technology of China, Hefei, Anhui, 230026, PR China}
	\email{wenh@mail.ustc.edu.cn}
	
	\address[Song Shao]{School of Mathematical Sciences, University of Science and Technology of China, Hefei, Anhui, 230026, PR China}
	\email{songshao@ustc.edu.cn}
	
	\address[Rongzhong Xiao]{School of Mathematical Sciences, University of Science and Technology of China, Hefei, Anhui, 230026, PR China}
	\email{xiaorz@mail.ustc.edu.cn}

	\subjclass[2020]{Primary: 37A30; Secondary: 37A10.}
	\keywords{Ergodic theorems, Maximal inequalities, Measurable flows,  Polynomial ergodic averages, Topological models.}

\begin{abstract}
In this paper, we study the pointwise convergence of centain continuous-time polynomial ergodic averages. Our approach is based on the topological models of measurable flows. One of the main results of this paper is as follows:
Let $a\in \R$, $Q\in \R[t]$ with $\deg Q\ge 2$. Let $(X,\X,\mu, (T^{t})_{t\in \R})$ and $(X,\X,\mu, (S^{t})_{t\in \R})$ be two measurable flows. Then for any $f_1, f_2, g\in L^{\infty}(\mu)$, the limit
\begin{equation*}	\lim\limits_{M\to\infty}\frac{1}{M}\int_{0}^{M}f_1(T^{t}x)f_2(T^{at}x)g(S^{Q(t)}x)dt
\end{equation*}
exists for $\mu$-a.e. $x\in X$. In particular, we are able to build a pointwise ergodic theorem involving geodesic flow and horocycle flow.

%we use topological models of measurable flows and some pointwise ergodic theorems from discrete-time measure preserving systems to prove two pointwise polynomial ergodic theorems of measurable flows.
\end{abstract}

\maketitle

\section{Introduction}

In this paper, a {\bf measure preserving system} refers to a quadruple $(X,\X, \mu, T)$, where $(X,\X,\mu )$ is a Lebesgue probability space and $T: X \rightarrow X$ is an invertible measure preserving transformation. Let $D\in \N$. A tuple $(X,\X,\mu, (T^{{\bf t}})_{{\bf t}\in \R^{D}})$ is called a \textbf{measurable flow} if $(X,\X,\mu )$ is a Lebesgue probability space, $(T^{\bf t})_{{\bf t}\in \R^{D}}$ is a $D$-parameter group of invertible measure preserving transformations acting on $X$ and the mapping $\R^D\times X\rightarrow X, \ ({\bf t}, x)\mapsto T^{{\bf t}}x$ is measurable. Throughout the paper, for any non-empty finite set $A$ and any function $f$ defined on $A$, we use the notation $$\E_{x\in A}f(x)=\frac{1}{|A|}\sum_{x\in A} f(x)$$ to denote the average of $f$ over $A$, where $|A|$ denotes the cardinality of $A$. For each $N\in \N$, we define $I_{N}=\{0,\ldots, N-1\}$.

\subsection{Furstenberg-Bergelson-Leibman conjecture}

Following Furstenberg's work  \cite{F77} on the dynamical proof of Szemer\'{e}di's theorem, problems concerning the norm convergence and pointwise convergence of
polynomial ergodic averages have garnered significant attention.
A fundamental question in this area is as follows: Given $d,l\in \N$,
let $T_1,\ldots, T_d$ be invertible measure preserving transformations acting on Lebesgue probability space $(X,\X,\mu)$. % generating a nilpotent group (of transformations).
Is it true that for any $f_1,  \ldots,f_l \in L^\infty(\mu)$ and any
$P_{i,j}\in\Z[n],1\le i\le d, 1\le j\le l$, the averages
\begin{equation}\label{MEA-pol-general}
  \E_{n\in I_{N}}\prod_{j=1}^lf_j(T_1^{P_{1,j}(n)}\cdots T_d^{P_{d,j}(n)}x)
\end{equation}
converge in $L^2(\mu)$ and almost everywhere as $N\to\infty$?

%Is it true that for any integer valued polynomials $p_i(n)$ and $f_i\in L^\infty(X,\X,\mu)$, $i=1,2,\ldots, d$,
%\begin{equation}\label{MEA-pol-general}
%  \lim_{N\to\infty}\frac{1}{N}\sum_{n=0}^{N-1} f_1(T_1^{p_1(n)}x)f_2(T_2^{p_2(n)}x)\ldots f_d(T_2^{p_d(n)}x)
%\end{equation}
%exists in the $L^2(\mu)$ norm? Almost everywhere?

%Let $(X,\X,\mu, T)$ be a m.p.s. and $d\in \N$. For $a_1,a_2,\ldots, a_d\in \Z$ and $f_1,f_2, \ldots, f_d\in L^\infty(X, \mu)$, the convergence of the averages
%\begin{equation}\label{multiple1}
%    \frac 1 N\sum_{n=0}^{N-1}f_1(T^{a_1n}x)f_2(T^{a_2n}x)\ldots f_d(T^{a_dn}x)
%\end{equation}
%in $L^2$ norm was established by Host and Kra \cite{HK05} (see also Ziegler \cite{Z}).
It was shown by Bergelson and Leibman in \cite{BVLA} that the polynomial ergodic averages \eqref{MEA-pol-general} may fail to converge in $L^2(\mu)$ when the group generated by $T_1,\ldots, T_d$ is merely assumed to be solvable. In \cite[Page 468]{BVLA}, they conjectured that when $T_1,\ldots, T_d$ generate a nilpotent group,  the polynomial ergodic averages \eqref{MEA-pol-general} converge both in $L^2$-norm and almost everywhere. This conjecture is often referred to as the \textbf{ Furstenberg-Bergelson-Leibman conjecture} in the literature \cite{BMSW,IAMMS, KLMP, BMT}.
Walsh \cite{W} resolved this conjecture for the $L^2$-norm, proving the $L^2$-norm convergence of the averages \eqref{MEA-pol-general} when $T_1,\ldots, T_d$ generate a nilpotent group. Before Walsh's result, there was an
extensive body of research aimed at establishing $L^2$-norm convergence,
including groundbreaking works by Host-Kra \cite{HK05}, Ziegler \cite{Z07}, Tao\cite{T08}, and others. For more details and references, see the survey articles \cite{BV06, FN16, Kra06}.

The problem of pointwise convergence for the expression in \eqref{MEA-pol-general} is significantly more challenging.
The first major breakthrough in this area
was made by Bourgain, who showed that for any $Q\in \Z[n]$ and any $f\in L^\infty(\mu)$, the averages $$\E_{n\in I_{N}}f(T_{1}^{Q(n)}x)$$ converge almost everywhere as $N\to\infty$ \cite{B88}; and for any $a_1, a_2\in \Z$ and any $f_1, f_2 \in  L^\infty(\mu)$,
the averages $$\E_{n\in I_{N}} f_1(T_{1}^{a_1n}x)f_2(T_{1}^{a_2n}x)$$  converge almost everywhere as $N\to\infty$ \cite{B4}. For more results on bilinear ergodic averages, one can see \cite{ELH,X,krause2025}.

In 2022, Krause, Mirek and Tao proved that for any $Q\in \Z[n]$ with $\deg Q\ge 2$ and any $f_1, f_2 \in  L^\infty(\mu)$, the averages $$\E_{n\in I_{N}}f_1(T_{1}^nx)f_2(T_{1}^{Q(n)}x)$$ converge almost everywhere as $N\to\infty$ \cite{BMT}. In 2023, Ionescu, Magyar,  Mirek and Szarek \cite{IAMMS} proved that if $T_1, \ldots, T_d$ generate a nilpotent group of step two, then for any $Q_1,\ldots,Q_d\in \Z[n]$ and any $f \in  L^\infty(\mu)$, the averages $$\E_{n\in I_{N}}f(T_1^{Q_{1}(n)}\cdots T_d^{Q_{d}(n)}x)$$ converge almost everywhere as $N\to\infty$. In 2024, Kosz, Mirek, Peluse and Wright \cite{KMPW24} extended the result of Krause, Mirek and Tao by proving that if $T_1, \ldots, T_d$ are commuting and $Q_1,\ldots,Q_d\in \Z[n]$ with distinct degrees, then for any $f_1,\ldots f_d\in L^{\infty}(\mu)$, the averages $$\E_{n\in I_{N}}f_1(T_{1}^{Q_{1}(n)}x)\cdots f_d(T_{d}^{Q_{d}(n)}x)$$ converge almost everywhere as $N\to\infty$. 

When $l>2$, the pointwise convergence of the expression in \eqref{MEA-pol-general} can only be considered for certain special classes of measure preserving systems at this stage. For instance, when $(X,\X,\mu,T_1)$ is a $K$-system, Derrien and Lesigne proved that for any $Q_1,\ldots, Q_d\in \Z[n]$ and any $f_1,\ldots, f_d\in L^\infty(\mu)$, the averages $$\E_{n\in I_{N}} f_1(T_{1}^{Q_{1}(n)}x)\cdots f_d(T_{1}^{Q_{d}(n)}x)$$ converge almost everywhere as $N\to\infty$ \cite{DL96}. In 2024, the third author \cite{Xiao2024} extended the result of Derrien and Lesigne to $\Z^d$-actions ($d\ge 1$). When $(X,\X,\mu,T_1)$ is an ergodic distal system, Huang, Shao and Ye proved that for any $f_1,\ldots, f_d\in L^\infty(\mu)$, the averages $$\E_{n\in I_{N}} f_1(T_{1}^{n}x)\cdots f_d(T_{1}^{dn}x)$$ converge almost everywhere as $N\to\infty$ \cite{HSY19}. When $(X,\X,\mu,T_1,\ldots,T_d)$ is an ergodic distal system and $T_1,\ldots, T_d$ are commuting, Donoso and Sun extended the result of Huang, Shao and Ye by proving that for any $f_1,\ldots, f_d\in L^\infty(\mu)$, the averages $$\E_{n\in I_{N}} f_1(T_{1}^{n}x)\cdots f_d(T_{d}^{n}x)$$ converge almost everywhere as $N\to\infty$ \cite{DS}.

\subsection{Continuous-time analogues of Furstenberg-Bergelson-Leibman conjecture}

In \cite{BLM}, Bergelson, Leibman and Moreira introduced the method, which allows ones to derive continuous-time versions of various discrete-time ergodic theorems. For example, combining the mothod with Bourgain's double recurrence theorem and polynomial ergodic theorem, it was shown that for a measurable flow $(X,\X,\mu, (T^{t})_{t\in \R})$, any $f, g\in L^{\infty}(\mu)$, any $a_1,a_2\in \Z$ and any $P\in \R[t]$, the limits $$\lim\limits_{M\to\infty}\frac{1}{M}\int_{0}^{M}f(T^{a_1t}x)g(T^{a_2t}x)dt\ \text{and}\ \lim\limits_{M\to\infty}\frac{1}{M}\int_{0}^{M}f(T^{P(t)}x)dt$$ exist for $\mu$-a.e. $x\in X$ \cite[Theorem 8.30, 8.31]{BLM}.

%\medskip

%\subsection{}

In 2022, Bourgain, Mirek, Stein and Wright proved the pointwise convergence for certain multi-parameter polynomial ergodic averages \cite{BMSW}. They showed that for a measure preserving system $(X,\X,\mu,T)$, any $k\in\N$, any $P\in \Z[n_1,\ldots, n_k]$ and any $f\in L^\infty(\mu)$, the limit
\begin{equation*}
  \lim_{\min\{N_1,\ldots, N_k\}\to\infty} \E_{(n_1,\ldots, n_k)\in \prod_{j=1}^k I_{N_j}} f(T^{P(n_1,\ldots, n_k)}x)
\end{equation*}
exists for $\mu$-a.e. $x\in X$. Let $T_1,\ldots, T_d:X\rightarrow X$ be invertible, commuting measure preserving transformations acting on Lebesgue probability space $(X,\X,\mu)$. The {\bf Furstenberg-Bellow problem} conjectures that for any $k\in\N$, any $P_1,\ldots, P_d\in \Z[n_1,\ldots, n_k]$ and any $f\in L^\infty(\mu)$, the limit
\begin{equation*}
  \lim_{\min\{N_1,\ldots, N_k\}\to\infty} \E_{{\bf n}\in \prod_{j=1}^k I_{N_j}} f(T_1^{P_1({\bf n})}\cdots T_d^{P_d({\bf n})}x)
\end{equation*}
exists for $\mu$-a.e. $x\in X$, where ${\bf n}=(n_1,\ldots, n_k)$
\cite[Conjecture 1.22]{BMSW}. This conjecture remains open.

Recently, Kosz, Langowski, Mirek and Plewa provided an affirmative answer to its continuous-time analogues \cite{KLMP}. The specific statement is as follows: Given $D,k\in\N$, let $(X,\X,\mu, (T^{\bf t})_{{\bf t}\in \R^{D}})$ be a measurable flow. Then for any $P_1,\ldots,P_D\in  \R[t_1,\ldots,t_k]$ and any $f\in L^{\infty}(\mu)$, the limit
\begin{equation*}
  \lim_{\min\{M_1,\ldots, M_k\}\to\infty} \frac{1}{M_{1}\cdots M_{k}}\int_{\prod_{j=1}^{k}[0,M_j]}f(T^{P_{1}({\bf t})\vec{e}_1}\cdots T^{P_{D}({\bf t})\vec{e}_D}x)d{\bf t}
\end{equation*}
exists for $\mu$-a.e. $x\in X$, where for each $j\in\{1,\ldots,D\}$, $\vec{e}_j$ denotes the $j$-th basis vector of the standard basis in $\R^{D}$. Similar to the \textbf{Furstenberg-Bergelson-Leibman conjecture}, for measurable flows, we have the corresponding question:

\begin{ques}
Given $D\in\N$, let $(X,\X,\mu, (T^{\bf t})_{{\bf t}\in \R^{D}})$ be a measurable flow. Let $k,l\in \N$. Then for any $P_{i,j}\in  \R[t_1,\ldots,t_k], 1\le i\le D, 1\le j\le l$ and any $f_1,\ldots, f_l\in L^{\infty}(\mu)$, does the limit
\begin{equation}\label{aa3}
  \lim_{\min\{M_1,\ldots, M_k\}\to\infty} \frac{1}{M_{1}\cdots M_{k}}\int_{ \prod_{i=1}^{k}[0,M_i]}\prod_{j=1}^lf_j(T^{P_{1,j}({\bf t})\vec{e}_1}\cdots T^{P_{D,j}({\bf t})\vec{e}_D}x)d{\bf t}
\end{equation}
exist for $\mu$-a.e. $x\in X$?
\end{ques}
In this paper, we will consider some special cases of \eqref{aa3}.
\begin{rem}
When $k=1$, Austin \cite{A} proved that the limit in \eqref{aa3} exists in $L^{2}(\mu)$. When $D=1$, Potts \cite{P} showed that the limit in \eqref{aa3} exists in $L^{2}(\mu)$. For the general case, by combining \cite[Theorem 6.14]{BLM} and \cite[Theorem 5.1]{W}, one can show that the limit in \eqref{aa3} exists in $L^{2}(\mu)$.
\end{rem}
\subsection{Ergodic theorems without commutativity} In 2023, Frantzikinakis and Host presented the following mean ergodic theorem, which deos not require any commutativity.

\medskip

\begin{thm}[{\cite[Theorem 1.1]{HN}}]\label{thm-FH}
Let $T,S$ be invertible measure preserving transformations acting on Lebesgue probability
space $(X,\X,\mu)$ such that $(X,\X,\mu,T)$ has zero entropy. Let $P\in \Z[n]$ with $\deg P \ge  2$. Then for any $f,g\in L^\infty(\mu)$, the limit
$$
  \lim_{N\to\infty}\E_{n\in I_{N}} f(T^nx)g(S^{P(n)}x)
$$
exists in $L^2(\mu)$.
\end{thm}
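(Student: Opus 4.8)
The plan is to prove the mean convergence by a ``structure versus randomness'' argument carried out in the variable $g$, using the Gowers--Host--Kra seminorms of the (arbitrary) system $(X,\X,\mu,S)$, and then to exploit the zero entropy of $(X,\X,\mu,T)$ to handle the resulting weights. Write $A_{N}(x)=\E_{n\in I_{N}}f(T^{n}x)g(S^{P(n)}x)$. Since the Host--Kra factors $Z_{s}(S)$ form an increasing sequence of factors whose union is dense in $L^{2}(\mu)$, I would decompose $g=g_{\mathrm{str}}+g_{\mathrm{unif}}$ and reduce to two extreme cases: the \emph{structured} case, where $g$ is measurable with respect to some $Z_{s}(S)$ (so that $m\mapsto g(S^{m}y)$ is a nilsequence), and the \emph{uniform} case, where $\norm{g}_{U^{s}(S)}$ is small for the relevant $s$. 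One then shows that $A_{N}$ converges in the structured case and that $\norm{A_{N}}_{L^{2}(\mu)}\to 0$ in the uniform case.

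For the structured case, fix $x$ outside a null set; then $n\mapsto g(S^{P(n)}x)$ is a polynomial nilsequence $\psi(n)$, being the evaluation of a polynomial orbit on a nilmanifold. The required convergence of $\E_{n\in I_{N}}f(T^{n}x)\psi(n)$ is then a polynomial generalized Wiener--Wintner theorem for nilsequence weights, valid for an arbitrary measure preserving transformation $T$; here the zero entropy hypothesis is \emph{not} needed. The real content lies in the uniform case, which I would approach by van der Corput's inequality in the variable $n$, iterated in the style of Bergelson's PET induction so as to lower the degree of $P$. Each application replaces $f$ by a difference function $F_{h}=\overline{f}\cdot(f\circ T^{h})$ along $T$ and replaces $P(n)$ by $P(n+h)-P(n)$, a polynomial of strictly smaller degree; because $\deg P\ge 2$, after finitely many steps the $S$-polynomial becomes linear and the accumulated estimate is governed by a seminorm $\norm{g}_{U^{s}(S)}$, which is small by assumption.

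The main obstacle --- and the precise place where zero entropy is indispensable --- is that $T$ and $S$ do not commute, so the weights $f(T^{n}x)$ cannot simply be absorbed as bounded factors when one passes to the $S$-orbit of $x$. Making the substitution $y=S^{P(n)}x$ turns such a weight into $f\bigl(T^{n}S^{-P(n)}y\bigr)$, a genuinely non-commutative cocycle in $n$ which, for a positive-entropy $T$, could resonate with the polynomial $S$-orbit and destroy the decay predicted by the seminorm bound. The key is that zero entropy of $(X,\X,\mu,T)$ forces the sequence $n\mapsto f(T^{n}x)$ to be \emph{deterministic} for $\mu$-a.e.\ $x$, hence a ``good weight'': its correlations with the uniform part of the degree-$(\ge 2)$ polynomial $S$-orbit average to zero. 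Quantitatively, I would encode this through the vanishing of the averaged weight--correlations $\E_{h}\norm{\E_{n}F_{h}(T^{n}x)\,G_{h,n}(S^{P(n)}x)}_{L^{2}(\mu)}$ produced by the induction, where $G_{h,n}$ is the corresponding difference function of $g$; the fact that $\deg P\ge 2$ guarantees (roughly) that the polynomial iterates equidistribute transversally to the linear $T$-orbit, which is what lets the deterministic weight decouple. Establishing this decoupling rigorously --- reconciling the PET induction on the $S$-side with the determinism of the $T$-side --- is where I expect the bulk of the technical work, and it is exactly the step that fails without the zero entropy assumption.
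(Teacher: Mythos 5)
First, a point of order: the paper does not prove Theorem \ref{thm-FH} at all. It is quoted verbatim from Frantzikinakis--Host \cite[Theorem 1.1]{HN} and used only as a known input motivating Theorems A--D, so there is no internal proof to compare your argument against. Judged on its own terms, your text is a programme rather than a proof, and the step you yourself defer at the end (``establishing this decoupling rigorously \ldots is where I expect the bulk of the technical work'') is not a technical remainder: it is essentially the theorem.

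Concretely, three things do not go through as described. (i) The PET/van der Corput step does not close up. Differencing in $n$ produces terms $f(T^{n+h}x)\overline{f(T^{n}x)}\,g(S^{P(n+h)}x)\overline{g(S^{P(n)}x)}$; the two $g$-factors can indeed be rewritten as a difference function of $g$ evaluated at $S^{P(n)}x$, but the two $f$-factors stay anchored at the base point $x$, and since $T$ and $S$ do not commute there is no single system in which the iteration can be continued. Hence the asserted conclusion of the uniform case --- that $\norm{A_N}_{L^2(\mu)}$ is controlled by $\norm{g}_{U^s(S)}$ --- is never actually derived; this is precisely the obstruction you name, and naming it does not remove it. (ii) The choice of dichotomy is off-target: since only a single polynomial iterate of $S$ appears, the natural structured/uniform decomposition of $g$ for averages $\E_{n\in I_N}c_n\,g(S^{P(n)}x)$ with bounded scalar weights is spectral (eigenfunctions, i.e.\ the Kronecker-type factor of $S$), not a higher-order Host--Kra factor $Z_s(S)$; invoking $U^s(S)$ for large $s$ both overshoots and leaves you without a mechanism to exploit it. (iii) The point where zero entropy must enter --- that $n\mapsto f(T^nx)$ is a.e.\ a deterministic sequence and that deterministic sequences are good weights for the polynomial mean ergodic theorem when $\deg P\ge 2$ --- is exactly the substance of the Frantzikinakis--Host argument. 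Your proposal asserts this ``decoupling'' (``its correlations with the uniform part \ldots average to zero'') rather than proving it, so the net effect is to reduce the theorem to an unproved statement equivalent to it. The structured half (a Wiener--Wintner theorem for nilsequence weights, valid for arbitrary $T$) is plausible, but on its own it carries none of the load.
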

The zero entropy assumption on
$T$ is necessary in the above result. If both $T$ and $S$ have positive entropy, the above result may fail (See \cite[Proposition 1.4]{HN} for details.).

Up to the present, it remains unknown whether the limit in Theorem \ref{thm-FH} exists almost everywhere.
But in the continuous-time setting, Frantzikinakis showed the following result:
Let $(X,\X,\mu, (T^{t})_{t\in \R})$ and $(X,\X,\mu, (S^{t})_{t\in \R})$ be two measurable flows, with no commuting restrictions imposed on the $\R$-actions $(T^{t})_{t\in \R}$ and $(S^{t})_{t\in \R}$. Then  for any $f, g\in L^{\infty}(\mu)$, the limit $$\lim\limits_{M\to\infty}\frac{1}{M}\int_{0}^{M}f(T^{t}x)g(S^{t^{2}}x)dt$$ exists for $\mu$-a.e. $x\in X$ (a special case of \cite[Theorem 1.11]{FN}).
When $(T^{t})_{t\in \R}$ and $(S^{t})_{t\in \R}$ are commuting $\R$-actions, this result was  previously established by Christ, Durcik, Kova{\v{c}}, and Roos in \cite{CDKR}.

In \cite{HSY23,X}, Theorem \ref{thm-FH} was extended to the following form:
Let $T,S$ be invertible measure preserving transformations acting on Lebesgue probability
space $(X,\X,\mu)$ such that $(X,\X,\mu,T)$ has zero entropy. Then for any $a_1,a_2\in\Z, P\in \Z[n]$ with $\deg P\ge 2$ and any $f,g\in L^\infty(\mu)$, the limit
	\begin{equation}\label{aa1}
	 \lim_{N\to\infty}\E_{n\in I_{N}}f_{1}(T^{a_1n}x)f_{2}(T^{a_2n}x)g(S^{P(n)}x)
	\end{equation}
exists in $L^2(\mu)$. We conjecture that the limit in \eqref{aa1} exists almost everywhere.
In this paper, we will consider some continuous-time analogues of the limit in \eqref{aa1}.

\subsection{Main results}
%\medskip

Before presenting the main results of the paper, we introduce two notations. Let $D\in\N$. For a measurable flow $(X,\X,\mu, (T^{{\bf t}})_{{\bf t}\in \R^{D}})$, we define $\mathcal{I}((T^{\bf t})_{{\bf t}\in\R^{D}})$ as the sub-$\sigma$-algebra of $\X$ generated by $\{A\in\X:\text{for}\ \text{each}\  {\bf t}\in \R^{D},T^{{\bf t}}A=A\ (\text{mod}\ \mu)\}$. For a Lebesgue probability space $(Y,\Y,\nu)$, a sub-$\sigma$-algebra $\mathcal{F}$ of $\Y$ and any $f\in L^{1}(Y,\Y,\nu)$, $\E_{\nu}(f|\mathcal{F})$ denotes the conditional expectation of $f$ with respect to $\mathcal{F}$ (For the definition, see Subsection \ref{CE}.) .

The following result shows some continuous-time analogues of the limit in \eqref{aa1} exist almost everywhere.

\medskip
\noindent{\bf Theorem A.}\ {\em
Let $(X,\X,\mu, (T^{t})_{t\in \R})$ and $(X,\X,\mu, (S^{t})_{t\in \R})$ be two measurable flows, $a\in \R$, and $Q\in \R[t]$ with $\deg Q\ge 2$. Then for any $f_1, f_2, g\in L^{\infty}( \mu)$, the limit
\begin{equation}\label{aa2}
	\lim\limits_{M\to\infty}\frac{1}{M}\int_{0}^{M}f_1(T^{t}x)f_2(T^{at}x)g(S^{Q(t)}x)dt
\end{equation}
exists for $\mu$-a.e. $x\in X$.

} 	
\medskip

%	  In \cite[Theorem 1.11]{FN}, Frantzikinakis built the following pointwise ergodic theorem of measurable flows without any commutativity via joint ergodicity for flows.
%	  \begin{thm}\label{thm4}
%	  	Let $k\in \N$. Let $(X,\X,\mu, (T_{i}^{t})_{t\in \R}),1\le i\le k$ be measurable flows. Then for any $f_1,\cdots,f_k\in L^{\infty}(X,\mu)$, the limit
%	  	\begin{equation}\label{eq3}
%	  		\lim\limits_{N\to\infty}\frac{1}{N}\int_{0}^{N}f_{1}(T_{1}^{t^{c_1}}x)\cdots f_{k}(T_{k}^{t^{c_k}}x)dt
%	  	\end{equation}
%exists almost everywhere where $c_1,\cdots,c_k$ are distinct positive real numbers.
%	  \end{thm}
%	  The above theorem reflects that for measurable flows, the independence of polynomials plays a decisive role for pointwise existence of (\ref{eq3}). But the phenomenon does not come up when we consider (\ref{eq2}). This is an essential difference between discrete- and continuous-time ergodic averages.
%	   Motivated by the results of Frantzikinakis and Xiao, we build the following result.

\medskip

In fact, we can show more.

\medskip
\noindent{\bf Theorem B.}\ {\em
			Let $(X,\X,\mu, (T^{t})_{t\in \R})$ and $(X,\X,\mu, (S^{t})_{t\in \R})$ be two measurable flows. Let $ a\in \mathbb{R}$, $Q\in \R[t]$ with $\deg Q\ge 2$, and $0<\alpha\le \beta$. Then for any $f_1,f_2, g\in L^{\infty}(\mu)$, the limit
\begin{equation}\label{TA1}					\lim\limits_{M\to\infty}\frac{1}{M}\int_{0}^{M}f_{1}(T^{t^\alpha}x)f_{2}(T^{at^\alpha}x)g(S^{Q(t^{\beta})}x)dt
\end{equation}
exists for $\mu$-a.e. $x\in X$. We denote the limit function by $L(f_1,f_2,g)$. Then for $\mu$-a.e. $x\in X$,
\begin{equation}\label{zz1}
L(f_1,f_2,g)=\E_{\mu}(g|\mathcal{I}((S^t)_{t\in \R}))(x)\lim\limits_{M\to\infty}\frac{1}{M}\int_{0}^{M}f_{1}(T^{t}x)f_{2}(T^{at}x)dt.
\end{equation}
}

%\medskip

\begin{rem}\label{rem1}
\begin{enumerate}
	\item The limit $\displaystyle \lim\limits_{M\to\infty}\frac{1}{M}\int_{0}^{M}f_{1}(T^{t}x)f_{2}(T^{at}x)dt$ appearing on the right-hand side of \eqref{zz1} is guaranteed to exist almost everywhere by Theorem \ref{thm6}. For the case where $a\in \Q$, further details regarding the properties of the associated limit function can be found in \cite{BLM,P}.

	\item When $\deg Q=1$, Theorem B may fail. In \cite[Subsection 4.1]{BVLA}, Bergelson and Leibman constructed a Lebesgue probability space $(Y,\mathcal{D},\nu)$, two invertible measure preserving transformations $S,R:Y\rightarrow Y$ and a positive measure subset $A$ such that the limit $$\lim_{N\to\infty}\E_{n\in I_N}\nu(S^{-n}A\cap R^{-n}A)$$ does not exists. Let $Z=Y\times [0,1)$. For any $t\in \R$, we define $\tilde{S}^{t}:Z\rightarrow Z,(y,s)\mapsto (S^{\lfloor t+s\rfloor}x,(t+s)\ \text{mod}\ 1)$. Similarly, we can define $\tilde{R}^{t}$ for any $t\in \R$. Clearly, $(Z,\mathcal{D}\otimes \B([0,1)),\nu\times m, (\tilde{S}^{t})_{t\in \R})$ and $(Z,\mathcal{D}\otimes \B([0,1)),\nu\times m, (\tilde{R}^{t})_{t\in \R})$ are two measurable flows, where $m$ denotes the Lebesgue measure on $[0,1)$ and $\B([0,1))$ is the Borel $\sigma$-algebra of $[0,1)$. Let $B=A\times [0,1)$. Then the limit
	$$\lim\limits_{M\to\infty}\frac{1}{M}\int_{0}^{M}1_{B}(\tilde{S}^{t}x)1_{B}(\tilde{R}^{t}x)dt$$ does not exist in $L^{2}(\nu\times m)$.
%	\item Let $d>2$ and $E$ be an at most countable subset of $\R$. Assume that for any $t\notin E$, any $a_1,\ldots,a_d\in \Z$ and any $f_1,\ldots,f_d\in L^{\infty}(\mu)$, the limit $$\lim\limits_{N\to\infty}\E_{n\in I_N}\prod_{j=1}^{d}f_{j}((T^t)^{a_{j}n}x)$$ exists for $\mu$-a.e. $x\in X$. Then, by repeating the argument of the proof of  \cite[Theorem 8.30]{BLM}, we know that the limit $$\lim\limits_{M\to\infty}\frac{1}{M}\int_{0}^{M}\prod_{j=1}^{d}f_{j}(T^{a_{j}t}x)dt$$ exists for $\mu$-a.e. $x\in X$. At this ponit, by repeating the argument of the proof of Theorem B, we can strengthen Theorem B as follows: For any $c_1,\ldots,c_d\in \Q$ and any $h_1,\ldots,h_d,g\in L^{\infty}(\mu)$, one has
%	\begin{align*}
%		&\lim\limits_{M\to\infty}\frac{1}{M}\int_{0}^{M}g(S^{Q(t^{\beta})}x)\prod_{j=1}^{d}h_{j}(T^{c_{j}t^{\alpha}}x)dt\\ & \hspace{3cm}=\E_{\mu}(g|\mathcal{I}((S^t)_{t\in \R}))(x)\lim\limits_{M\to\infty}\frac{1}{M}\int_{0}^{M}\prod_{j=1}^{d}h_{j}(T^{c_{j}t}x)dt
%	\end{align*}
%	for $\mu$-a.e. $x\in X$.
\end{enumerate}		
\end{rem}

With the same method, we can establish the following results:

 \medskip
\noindent{\bf Theorem C.}\ {\em
Let $(X,\X,\mu, (T^{t})_{t\in \R})$ and $(X,\X,\mu, (S^{t})_{t\in \R})$ be two measurable flows. Let $c\in \R$. For any $k\in \N$ , any $P\in \Z[t_1,\ldots,t_k]$ with $\deg P=1$, and any $f, g\in L^{\infty}(\mu)$, the limit
\begin{equation}\label{TA2}
\lim\limits_{\min\{M_1,\ldots,M_k\}\to\infty}\frac{1}{M_{1}\cdots M_{k}}\int_{\prod_{j=1}^{k}[0,M_j]}f(T^{{|{\bf t}|}}x)g(S^{{|{\bf t}|}^{2}+cP({\bf t})}x)d{\bf t}
\end{equation}
exists for $\mu$-a.e. $x\in X$, where ${|{\bf t}|}=|t_1|+\cdots +|t_k|$. We denote the limit function by $L(f,g)$. Then for $\mu$-a.e. $x\in X$, $$L(f,g)=\E_{\mu}(f|\mathcal{I}((T^t)_{t\in \R}))(x)\E_{\mu}(g|\mathcal{I}((S^t)_{t\in \R}))(x).$$
		
}

\medskip

\begin{rem}\label{rem-1}
When $f$ or $g$ is constant, the fact that the limit in \eqref{TA2} exists almost everywhere can be deduced from \cite[Theorem 1.5.(ii)]{KLMP} directly.
\end{rem}

%Let $k,m,d\in \N$. Let $(X,\X,\mu, (T_{i}^{t})_{t\in \R}),1\le i\le d$ be measurable flows. The ergodic averages considered in Theorem B only are special cases of the following general form:	\begin{equation}\label{eq4}		\frac{1}{M_{1}\cdots M_{k}}\int_{t\in \prod_{j=1}^{k}[0,M_j]}\prod_{i=1}^{m}g_{i}(\prod_{r=1}^{d}T_{r}^{P_{i,r}(t)}x)dt	\end{equation}	where $g_{i}\in L^{\infty}(X,\mu),P_{i,r}(t_1,\cdots,t_k)=\sum_{n=1}^{L(i,r)}a_{n,i,r}\prod_{j=1}^{k}t_{j}^{\alpha_{n,j,i,r}},\alpha_{n,j,i,r}\in \R_{+},a_{n,i,r}\in \R,1\le n\le L(i,r),1\le i\le m,1\le r\le d$.	For some cases of (\ref{eq4}), our method can obtain related pointwise ergodic theorem after adding needed commutativity. That is,

\medskip

\noindent{\bf Theorem D.} \ {\em
Let $(X,\X,\mu, (S^{\bf t})_{{\bf t}\in \R^{2}})$ be a measurable flow. Let $(X,\X,\mu, (T_{i}^{t})_{t\in \R}),1\le i\le d$ be measurable flows, where $d\in \N$. Let $\vec{e}_1=(1,0)$ and $\vec{e}_2=(0,1)$. Let $0<\alpha_1<\cdots<\alpha_d<\beta$,  $c\in \mathbb{R}$, and $Q\in \R[t]$ with $\deg Q\ge 2$. Then the following hold:
\begin{itemize}
\item[(1)] For any $f_1,\ldots,f_d,g\in L^{\infty}(\mu)$, the limit
\begin{equation}\label{TB1}
 \lim\limits_{M\to\infty}\frac{1}{M}\int_{0}^{M}f_{1}(T_{1}^{t^{\alpha_1}}x)\cdots f_{d}(T_{d}^{t^{\alpha_d}}x)g(S^{Q(t^{\beta})\vec{e}_1}S^{t^{\beta}\vec{e}_2}x)dt
\end{equation} exists for $\mu$-a.e. $x\in X$. We denote the limit function by $L(f_1,\dots,f_d,g)$. Then for $\mu$-a.e. $x\in X$, $$L(f_1,\dots,f_d,g)=\E_{\mu}(g|\mathcal{I}((S^{\bf t})_{{\bf t}\in \R^{2}}))(x)\prod_{j=1}^{d}\E_{\mu}(f_{j}|\mathcal{I}((T_{j}^t)_{t\in \R}))(x).$$

\item[(2)] For any $f,g\in L^{\infty}(\mu)$, the limit
\begin{equation}\label{TB2}
\lim\limits_{M\to\infty}\frac{1}{M}\int_{0}^{M}f(S^{ct^{\beta}\vec{e}_2}x)g(S^{Q(t^{\beta})\vec{e}_1}S^{t^{\beta}\vec{e}_2}x)dt
\end{equation}
exists for $\mu$-a.e. $x\in X$. We denote the limit function by $L(f,g)$. Then for $\mu$-a.e. $x\in X$, $$L(f,g)=\lim\limits_{M\to\infty}\frac{1}{M}\int_{0}^{M}f(S^{ct\vec{e}_2}x)\E_{\mu}(g|\mathcal{I}((S^{t\vec{e}_1})_{{t}\in \R}))(S^{t\vec{e}_2}x)dt.$$
\end{itemize}
}

To obtain a concrete application, let us focus on the geodesic flow and horocycle flow. After combining the method used in the proof of Theorem B, \cite[Main theorem]{GHSY} and \cite[Section 6 of Chapter 4]{G}, we are able to establish the following pointwise ergodic theorem:

\medskip

\noindent{\bf Corollary E.}\ {\em
	Let $X=SL_{2}(\R)/SL_{2}(\Z)$ and $m$ be the Harr measure on $X$. Given $d\in \N$, let $ c_1,\ldots,c_d$ be distinct non-zero rational numbers. Let $Q\in \R[t]$ with $\deg Q\ge 2$. Then for any $f_1,\ldots,f_d,g\in L^{\infty}(m)$ and $m$-a.e. $x\in X$,
	$$\lim\limits_{M\to\infty}\frac{1}{M}\int_{0}^{M}g(a(Q(t))x)\prod_{j=1}^{d}f_{j}(u(c_{j}t)x)dt=\int_{X}gdm\prod_{j=1}^{d}\int_{X}f_jdm,$$ where for each $t\in \R$,
	$$
	a(t)=\begin{pmatrix}
	e^t & 0 \\
	0 & e^{-t}
	\end{pmatrix}, \quad
	u(t)=\begin{pmatrix}
	1& t\\
	0 & 1
	\end{pmatrix}.
	$$
}

\medskip

\begin{rem}
The absence of a general multi-linear version of Theorem \ref{thm-K} necessitates the restriction that $ c_1,\ldots,c_d\in \Q$ in the above corollary. However, it is worth noting that when $d=2$, the corollary remains valid for all distinct $c_1,c_2\in \R$.
\end{rem}
%\medskip

\subsection{A brief overview of the proofs}

We now provide a concise outline of the proof structure. Initially, we simplify the problem by reducing bounded measurable functions to a class of typical functions. These typical functions can be regarded as continuous functions defined on specific compact metric spaces. This reduction is facilitated through the application of topological models of measurable flows and relevant maximal inequalities. Subsequently, we transform the continuous-time ergodic averages under consideration into discrete-time ergodic averages, leveraging the continuity properties of the typical functions. Finally, we establish the pointwise convergence of these discrete-time ergodic averages by established ergodic theorems.

To help the reader's comprehension, we present a detailed sketch of the proof for a special case of Theorem A in Subsection \ref{sample}.

%Our method provides a new angle to observe the pointwise limit performance of a class of continuous-time ergodic averages. Moreover, it can indicate the intermediate process of whole convergent course in some sense.

\subsection*{Organization of the paper}

The paper is organized as follows. In Section \ref{Section-pre}, we introduce some necessary notions and review some known results used in the paper. In Section \ref{section-BC}, we prove Theorem B and Theorem C, and in Section \ref{section-D}, we show Theorem D. In Section \ref{section-ques}, we ask a question. Finally, Appendix \ref{Ap1} includes a new ergodic theorem.

\subsection*{Acknowledgement}
The first author and second author are supported by National Key R$\&$D Program of China (2024YFA1013601, 2024YFA1013602, 2024YFA1013600), and National Natural Science Foundation of China (12426201, 12371196, 12201599). The third author is supported by National Natural Science Foundation of China (123B2007, 12371196).

%The first author is supported by National Natural Science Foundation of China (12090012, 12031019, 12090010). The second author is supported by National Natural Science Foundation of China (12371196). The third author is supported by National Natural Science Foundation of China (12371196, 123B2007).
	
\section{Preliminaries}\label{Section-pre}
	
In this section, we give some basic notations used throughout the paper, and we also introduce some results that will be employed in subsequent discussions.

\subsection{Notations}

\begin{itemize}
  \item The set of integers (resp. natural numbers $\{1,2,\ldots\}$, rational numbers, real numbers, complex numbers) is denoted by $\Z$ (resp. $\N$, $\Q$, $\R$, $\C$).
Let $\R_{+}=\{x\in \R:x>0\}$.
  \item Let $k\in\N$. In this paper, we use ${\bf n}$ and ${\bf t}$ to denote the element of $\Z^k$ and $\R^k$, respectively. For any ${\bf t}\in \R^{k}$, let $|{\bf t}|=|t_1|+\cdots + |t_k|$. Let ${\bf 0}$ be the original point of $\R^{k}$.
  \item For any $x\in \R$, $\lfloor x \rfloor=n$, where $n\le x < n+1, n\in\Z$.
\item For every $N\in \N$, let $I_N=\{0,\ldots,N-1\}$.
%Let $N_1,N_2\in\N$. By $N_1\gg N_2$, we mean that $N_1/N_2$ is sufficiently large.

%For any $\eta>0$, $B^{k}_{\eta}(0)$ denotes set $\{{\bf t}\in \R^k:\norm{\bf t}<\eta\}$.
\item For any non-empty finite set $A$ and any function $f$ on $A$, we write $$\E_{x\in A}f(x)=\frac{1}{|A|}\sum_{x\in A} f(x)$$ for the average of $f$ over $A$, where $|A|$ denotes the cardinality of $A$.

\item For a measure preserving system $(X,\X,\mu,T)$, $\mathcal{I}(T)$ denotes the sub-$\sigma$-algebra of $\X$ generated by $\{A\in\X:T^{-1}A=A\ (\text{mod}\ \mu)\}$.

\item Let $D\in\N$. For a measurable flow $\left(X,\X,\mu, (T^{{\bf t}})_{{\bf t}\in \R^{D}}\right)$, $\mathcal{I}\big((T^{\bf t})_{{\bf t}\in\R^{D}}\big)$ denotes the sub-$\sigma$-algebra of $\X$ generated by $\{A\in\X:\text{for}\ \text{each}\  {\bf t}\in \R^{D},T^{{\bf t}}A=A\ (\text{mod}\ \mu)\}$.
\end{itemize}

\subsection{Conditional expectation and disintegration of a measure}\label{CE}
For Lebesgue probability space $(Y,\Y,\nu)$, any sub-$\sigma$-algebra $\mathcal{F}$ of $\Y$ and any $f\in L^{1}(Y,\Y,\nu)$, the \textbf{conditional expectation} of $f$ with respect to ${\bf \mathcal{F}}$ is denoted by  $\E_{\nu}(f|\mathcal{F}) $, and it is defined in $L^{1}(Y,\mathcal{F},\nu)$ such that for any $A\in \mathcal{F}$, $$\int_{A}fd\nu=\int_{A}\E_{\nu}(f|\mathcal{F})d\nu.$$ Then there exists a unique $\Y$-measurable map $Y\to \mathcal{M}(Y,\Y),y\mapsto \nu_y$, called the {\bf disintegration} of $\nu$ with respect to $\mathcal{F}$, under neglecting $\nu$-null sets such that for any $f\in L^{\infty}(Y,\Y,\nu)$, $$\E_{\nu}(f|\mathcal{F})(y)=\int_{Y}fd\nu_y$$ for $\nu$-a.e. $y\in Y$, where $\mathcal{M}(Y,\Y)$ is the collection of probability measures on $(Y,\Y)$, endowed with standard Borel structure.

\subsection{Topological models of measurable flows}

Let $(X,\X,\mu, (T^{{\bf t}})_{{\bf t}\in \R^{D}})$ be a measurable flow, where $D\in \N$ . A set $X_0\subset X$ is $(T^{\bf t})_{{\bf t}\in \R^{D}}$-invariant if $T^{\bf t}x\in X_0$ for every $x\in X_0, {\bf t}\in\R^{D}$.
	
	Two measurable flows $(X,\X,\mu, (T^{\bf t})_{{\bf t}\in \R^{D}})$ and $(Y,\Y,\nu, (S^{{\bf t}})_{{\bf t}\in \R^{D}})$ are \textbf{isomorphic} if there is an invertible measure preserving map $\phi$ between $(T^{\bf t})_{{\bf t}\in \R^{D}}$-invariant measurable subset $X_0\subset X$ and  $(S^{\bf t})_{{\bf t}\in \R^{D}}$-invariant measurable subset $Y_0\subset Y$, both of measure $1$, such that $$\phi\circ T^{\bf t}(x)=S^{\bf t}\circ \phi(x)\ \text{for}\ \text{each}\  {\bf t}\in\R^{D},x\in X_0.$$
	This is illustrated by the following commutative diagram:
\[
\begin{CD}
X_0 @>{T^{\bf t}}>> X_0\\
@V{\phi}VV      @VV{{\phi}}V\\
Y_0 @>{S^{\bf t} }>> Y_0.
\end{CD}
\]
	
Let $Y$ be a compact metric space. A pair $(Y,(S^{\bf t})_{{\bf t}\in \R^{D}})$ is called a \textbf{continuous flow} if $(S^{\bf t})_{{\bf t}\in \R^{D}}$ is a $D$-parameter group of homeomorphisms acting on $Y$ and the mapping $\R^D\times Y\rightarrow Y,\ ({\bf t},y)\mapsto S^{\bf t}y$ is continuous. Let $\B(Y)$ be the Borel $\sigma$-algebra of $Y$, and let $\nu$ be an $(S^{\bf t})_{{\bf t}\in \R^{D}}$-invariant Borel probability measure on $Y$. Then $(Y,\B(Y),\nu,(S^{\bf t})_{{\bf t}\in\R^{D}})$ is a measurable flow.
	
\begin{defn}
A measurable flow $(Y,\B(Y),\nu,(S^{\bf t})_{{\bf t}\in\R^{D}})$ is called a \textbf{topological model} of the measurable flow $(X,\X,\mu, (T^{\bf t})_{{\bf t} \in \R^{D}})$, if $(Y,(S^{\bf t})_{{\bf t}\in\R^{D}})$ is a continuous flow, $\nu$ is an $(S^{\bf t})_{{\bf t}\in \R^{D}}$-invariant Borel probability measure on $Y$, and the two measurable flows $(X,\X,\mu, (T^{\bf t})_{{\bf t}\in \R^{D}})$ and $(Y,\B(Y),\nu,(S^{\bf t})_{{\bf t}\in\R^{D}})$ are isomorphic.
\end{defn}
	
\begin{thm} [{\cite[Theorem 3.2]{V}}] \label{thm1}
Every measurable flow has a topological model.
\end{thm}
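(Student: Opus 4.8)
The plan is to realize the model via Gelfand duality. I would produce a unital, separable, $\R^D$-invariant $C^*$-subalgebra $\A$ of $L^\infty(\mu)$ that generates $\X$ modulo $\mu$ and on which the Koopman action is continuous in the sup-norm, and then take $Y$ to be the Gelfand spectrum of $\A$. Write $U^{\bf t}f = f\circ T^{\bf t}$ for the Koopman operators; these form an $\R^D$-action by $\ast$-automorphisms of $L^\infty(\mu)$. The crucial requirement is continuity in the \emph{uniform} (essential sup) norm: I want each $f\in\A$ to lie in $C_u = \{f\in L^\infty(\mu): {\bf t}\mapsto U^{\bf t}f \text{ is } \|\cdot\|_\infty\text{-continuous}\}$. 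One checks directly that $C_u$ is a unital, $\R^D$-invariant, sup-norm-closed $\ast$-subalgebra of $L^\infty(\mu)$, so the only real issue is to find enough elements of $C_u$ to generate $\X$.

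This is where mollification enters, and it is the heart of the argument. For $f\in L^\infty(\mu)$ and a nonnegative $\phi\in C_c^\infty(\R^D)$ with $\int\phi = 1$, set $f_\phi(x) = \int_{\R^D}\phi({\bf s})f(T^{\bf s}x)\,d{\bf s}$, which is well defined for $\mu$-a.e.\ $x$ by joint measurability of the action and bounded by $\|f\|_\infty$. A change of variables gives $U^{\bf t}f_\phi = \int\phi({\bf s}-{\bf t})\,f\circ T^{\bf s}\,d{\bf s}$, whence $\|U^{\bf t}f_\phi - f_\phi\|_\infty \le \|f\|_\infty\,\|\phi(\cdot - {\bf t}) - \phi\|_{L^1}\to 0$ as ${\bf t}\to{\bf 0}$ by continuity of translation in $L^1$; thus $f_\phi\in C_u$. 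On the other hand, using that ${\bf s}\mapsto U^{\bf s}f$ is continuous into $L^2(\mu)$ for a measurable flow (strong continuity of the Koopman representation), one gets $\|f_\phi - f\|_{L^2}\le \int\phi({\bf s})\|U^{\bf s}f - f\|_{L^2}\,d{\bf s}\to 0$ as $\phi$ concentrates at ${\bf 0}$. Hence the mollified functions lie in $C_u$ and are $L^2$-dense in $L^\infty(\mu)$.

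With this in hand I would assemble $\A$ as follows. Fix a countable family $\{f_n\}\subset L^\infty(\mu)$ generating $\X$ modulo $\mu$, replace each $f_n$ by a mollification close to it in $L^2$, and enlarge the resulting countable set by all translates $U^{\bf t}g$ with ${\bf t}\in\Q^D$ (a countable, sup-norm dense subset of each orbit, available because $C_u$-orbits are sup-norm continuous). Let $\A$ be the unital $C^*$-algebra generated by this countable set; it is separable, contained in $C_u$, and $\R^D$-invariant, since sup-norm continuity lets the rational translates control all real translates. As $\A$ still generates $\X$ modulo $\mu$, I would form the Gelfand spectrum $Y$ of $\A$: a compact metrizable space with $\A\cong C(Y)$. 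The operators $U^{\bf t}$ dualize to homeomorphisms $S^{\bf t}$ of $Y$ forming an $\R^D$-group, with joint continuity of $({\bf t},y)\mapsto S^{\bf t}y$ following from sup-norm continuity of ${\bf t}\mapsto U^{\bf t}f$ together with joint continuity of evaluation $C(Y)\times Y\to\C$; this yields a continuous flow $(Y,(S^{\bf t})_{{\bf t}\in\R^D})$. The positive functional $f\mapsto\int f\,d\mu$ on $\A = C(Y)$ gives, via Riesz representation, an $(S^{\bf t})$-invariant Borel probability measure $\nu$, and because $\A$ generates $\X$ modulo $\mu$ the induced factor map is an isomorphism of Lebesgue systems intertwining $(T^{\bf t})$ and $(S^{\bf t})$ on invariant sets of full measure, as required.

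The main obstacle is precisely the uniform-continuity requirement: an arbitrary $f\in L^\infty(\mu)$ need not have sup-norm-continuous orbit, so one cannot simply take the $C^*$-algebra generated by a generating family, as one would for a single transformation. The mollification step is what manufactures a plentiful, invariant supply of uniformly continuous vectors while preserving the ability to generate $\X$; everything after it is the standard Gelfand-duality dictionary, the only extra care being separability and invariance of $\A$, both handled by adjoining the countably many rational translates and invoking the sup-norm continuity.
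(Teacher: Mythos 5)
Your proposal is correct and is essentially the proof of the cited result \cite[Theorem 3.2]{V} (the paper itself gives no proof, only the citation): one mollifies along the flow to manufacture an $L^2$-dense supply of vectors with sup-norm-continuous Koopman orbit, assembles from them a separable, invariant, generating $C^*$-subalgebra of $L^\infty(\mu)$, and passes to its Gelfand spectrum. The only step stated more briefly than it deserves is the last one --- upgrading the measure-algebra isomorphism to a point isomorphism that intertwines the actions for \emph{all} ${\bf t}$ on a single invariant set of full measure, which requires the point-realization theorem for Lebesgue spaces together with a Fubini argument --- but this is standard and does not affect the correctness of the approach.
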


\subsection{Maximal ergodic theorems}
	
Let $(X,\X,\mu, (T^{\bf t})_{{\bf t}\in \R^{D}})$ be a measurable flow, where $D\in \N$. Let $\mathcal{P}=\{P_1,\ldots,P_D\}\subset \R[t_1,\ldots,t_k]$, where $k\in \N$. For any $M_1,\ldots,M_k\in \R_{+}$ and any $f\in L^{2}(\mu)$, let
$$
A(\mathcal{P};M_1,\ldots,M_k;f)(x)=\frac{1}{M_{1}\cdots M_{k}}\int_{{ \prod_{j=1}^{k}[0,M_j]}}f(T^{P_{1}({\bf t})\vec{e}_1}\cdots T^{P_{D}({\bf t})\vec{e}_D}x)d{\bf t},
$$
where for each $j\in\{1,\ldots,D\}$, $\vec{e}_j$ denotes the $j$-th basis vector of the standard basis in $\R^{D}$.
	
\begin{thm}[{\cite[Theorem 1.5.(iii)]{KLMP}}]\label{thm2}
There is a constant $C=C(\deg \mathcal{P})>0$ such that
	 	\begin{equation}\label{eq5}
	 		\norm{\sup_{M_1,\ldots,M_k\in \R^{+}}\big|A(\mathcal{P};M_1,\ldots,M_k;f)(x)\big|}_{2}\le C\norm{f}_{2},
	 	\end{equation}
 where $\deg \mathcal{P}=\max\{\deg P_j:1\le j\le D\}$.
\end{thm}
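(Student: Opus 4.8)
The plan is to remove the abstract flow and reduce Theorem \ref{thm2} to a convolution maximal inequality on the group $\R^{D}$, which can then be attacked by Fourier-analytic means that exploit the continuity of the scale parameters $M_1,\dots,M_k$. First I would rewrite each average as a convolution: since $(T^{\bf t})_{{\bf t}\in\R^{D}}$ is an abelian action, $T^{P_1({\bf t})\vec e_1}\cdots T^{P_D({\bf t})\vec e_D}=T^{(P_1({\bf t}),\dots,P_D({\bf t}))}$, so
\begin{equation*}
A(\mathcal{P};M_1,\dots,M_k;f)(x)=\int_{X}f(T^{\bf s}x)\,d\sigma_{\bf M}({\bf s}),
\end{equation*}
where ${\bf M}=(M_1,\dots,M_k)$ and $\sigma_{\bf M}$ is the pushforward to $\R^{D}$ of the normalized indicator of $\prod_{j=1}^{k}[0,M_j]$ under the polynomial map ${\bf t}\mapsto(P_1({\bf t}),\dots,P_D({\bf t}))$. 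Because $\R^{D}$ is amenable, the Calder\'on transference principle (averaging over large F{\o}lner boxes) lets me replace the arbitrary flow by $\R^{D}$ acting on itself by translation; thus it suffices to prove $\norm{\sup_{{\bf M}\in\R_{+}^{k}}|f*\sigma_{\bf M}|}_{L^{2}(\R^{D})}\le C\norm{f}_{L^{2}(\R^{D})}$.

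On $\R^{D}$ I would pass to the Fourier side, where $\widehat{f*\sigma_{\bf M}}(\xi)=\hat f(\xi)\,\widehat{\sigma_{\bf M}}(\xi)$ with
\begin{equation*}
\widehat{\sigma_{\bf M}}(\xi)=\frac{1}{M_1\cdots M_k}\int_{\prod_{j=1}^{k}[0,M_j]}e^{2\pi i\sum_{\ell=1}^{D}P_\ell({\bf t})\,\xi_\ell}\,d{\bf t}.
\end{equation*}
The phase is a polynomial in ${\bf t}$ whose coefficients are linear in $\xi$, so van der Corput and sublevel-set estimates furnish decay of $\widehat{\sigma_{\bf M}}(\xi)$ once the phase genuinely oscillates on the box, with a gain governed by $\deg\mathcal{P}$. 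In the complementary non-oscillatory regime one has only the trivial bound $|\widehat{\sigma_{\bf M}}(\xi)|\lesssim 1$; there the operator is dominated by a smooth Hardy--Littlewood-type maximal function on $\R^{D}$, which is bounded on $L^{2}$, so this part is split off at the outset.

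The core difficulty is passing from fixed scales to the full supremum over the continuous $k$-parameter family, and here continuity is decisive. Iterating in each variable the one-parameter bound $\sup_{M>0}|F(M)|^{2}\le\int_{0}^{\infty}|F|^{2}\,\frac{dM}{M}+\int_{0}^{\infty}|M\partial_{M}F|^{2}\,\frac{dM}{M}$ (valid for $F$ decaying as $M\to\infty$), I would dominate $\sup_{\bf M}|f*\sigma_{\bf M}|^{2}$ by a sum of $2^{k}$ multi-parameter square functions $S^{(\epsilon)}f$, $\epsilon\in\{0,1\}^{k}$, where $S^{(\epsilon)}f=\big(\int_{\R_{+}^{k}}|f*\sigma^{(\epsilon)}_{\bf M}|^{2}\prod_{j}\frac{dM_j}{M_j}\big)^{1/2}$ and $\sigma^{(\epsilon)}_{\bf M}$ is obtained from $\sigma_{\bf M}$ by applying $M_j\partial_{M_j}$ in each coordinate with $\epsilon_j=1$ (boundary terms at $M_j\to\infty$ being controlled by the decay above). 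By Fubini and Plancherel,
\begin{equation*}
\norm{S^{(\epsilon)}f}_{2}^{2}=\int_{\R^{D}}|\hat f(\xi)|^{2}\Big(\int_{\R_{+}^{k}}\big|\widehat{\sigma^{(\epsilon)}_{\bf M}}(\xi)\big|^{2}\prod_{j}\frac{dM_j}{M_j}\Big)\,d\xi\le\norm{f}_{2}^{2}\,\sup_{\xi}\int_{\R_{+}^{k}}\big|\widehat{\sigma^{(\epsilon)}_{\bf M}}(\xi)\big|^{2}\prod_{j}\frac{dM_j}{M_j},
\end{equation*}
so the entire inequality reduces to the uniform-in-$\xi$ boundedness of these weighted $L^{2}$ integrals of the multiplier and its logarithmic derivatives.

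The main obstacle, where I expect the genuine work to lie, is precisely this last uniform bound: one must show that the oscillatory decay of $\widehat{\sigma_{\bf M}}(\xi)$ and of $(M_j\partial_{M_j})\widehat{\sigma_{\bf M}}(\xi)$ is \emph{jointly} integrable against $\prod_{j}dM_j/M_j$ over the oscillatory region, uniformly over all frequencies $\xi$. This is a multi-parameter stationary-phase problem for a polynomial phase, and it is exactly the polynomial (rather than linear) nature of $P_1,\dots,P_D$, encoded in $\deg\mathcal{P}$, that supplies enough decay; a Littlewood--Paley decomposition in $\xi$ would then glue the oscillatory pieces to the bounded smooth maximal operator handling low frequencies. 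It is worth emphasizing that because the scales $M_1,\dots,M_k$ are continuous real parameters, this argument bypasses the arithmetic major/minor-arc circle-method analysis that obstructs the discrete multi-parameter (Furstenberg--Bellow) problem, which is the structural reason the continuous statement is accessible by these classical oscillatory-integral techniques.
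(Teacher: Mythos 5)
First, a point of reference: the paper does not prove this statement at all --- Theorem \ref{thm2} is quoted verbatim from \cite[Theorem 1.5.(iii)]{KLMP} and used as a black box, so there is no internal proof to compare yours against. Judged on its own terms, your outline follows the route one would expect (and which is broadly the one taken in the cited source): Calder\'on transference from an abstract measure-preserving $\R^{D}$-action to translations on $\R^{D}$, passage to the Fourier multiplier $\widehat{\sigma_{\bf M}}$, a splitting into a low-frequency piece controlled by an ordinary maximal function and an oscillatory piece controlled by square functions obtained from the continuous Sobolev-type bound $\sup_{M}|F(M)|^{2}\le\int_{0}^{\infty}|F|^{2}\,\tfrac{dM}{M}+\int_{0}^{\infty}|M\partial_{M}F|^{2}\,\tfrac{dM}{M}$.

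The genuine gap is that the entire analytic content of the theorem is deferred rather than proved. You reduce everything to the uniform-in-$\xi$ bound $\sup_{\xi}\int_{\R_{+}^{k}}|\widehat{\sigma^{(\epsilon)}_{\bf M}}(\xi)|^{2}\prod_{j}\tfrac{dM_j}{M_j}<\infty$ and then explicitly identify this as ``where the genuine work lies'' without carrying it out; as written this integral is in fact \emph{infinite} (take $\xi$ fixed and all $M_j\to 0$, where $\widehat{\sigma_{\bf M}}(\xi)\to 1$, so the logarithmic measure diverges), so the low-frequency subtraction is not an optional preliminary but must be built into the very definition of the square functions, with the comparison term adapted to the anisotropic dilation structure of the polynomial map $(P_1,\dots,P_D)$ --- and when that map is degenerate the measures $\sigma_{\bf M}$ live on a lower-dimensional variety in $\R^{D}$, so domination by a ``smooth Hardy--Littlewood-type maximal function'' is itself a nontrivial claim, not a triviality to be split off. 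Two further points are not addressed: the multi-parameter version of van der Corput needed to get decay that is \emph{jointly} integrable in all $k$ scales (one-parameter stationary phase in each variable separately does not suffice), and why the resulting constant depends only on $\deg\mathcal{P}$ rather than on the coefficients of the $P_j$, which requires a normalization or rescaling argument you never perform. So the proposal is a plausible strategy map, but it does not constitute a proof of the inequality.
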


%\subsection{Maximal inequality of Birkhoff's ergodic averages}

%\begin{thm}\label{thm5}	$($\cite[Theorem 2]{B2}$)$
%Let $(X,\X,\mu, T)$ be a measure preserving system. Let $P\in \R[n]$. Then there exists a constant $C=C(P)$ such that for any $f\in L^{2}(\mu)$,
%	 	\begin{equation}\label{eq19}
%	 		\norm{\sup_{N\ge 1}\Big|\E_{n\in I_N}f(T^{\lfloor P(n) \rfloor}x)\Big|}_{2}\le C\norm{f}_{2}.
%	 	\end{equation}
%	 \end{thm}

We will need a special case of the following theorem in the proofs.
\begin{thm}[{\cite[Theorem 1.2.-(iii)]{IAMMS}}]\label{thm7}
			Let $d_1\in \N$ be given. Let $T_1,\ldots,T_{d_1}:X\rightarrow X$ be a family of invertible measure preserving transformations acting on Lebesgue probability space $(X,\X,\mu)$, that generates a nilpotent group of step two. Assume that $P_1,\ldots,P_{d_1}\in \Z[n]$ and let $d_2=\max\{\deg P_j(n):1\le j\le d_1\}$. Then there exists a constant $C=C(d_1,d_2)$ such that for any $f\in L^{2}(\mu)$,
			\begin{equation}\label{eq38}
				\norm{\sup_{N\ge 1}\Big|\E_{n\in I_N}f(T_{1}^{P_1(n)}\cdots T_{d_1}^{P_{d_1}(n)}x)\Big|}_{2}\le C\norm{f}_{2}.
			\end{equation}
		\end{thm}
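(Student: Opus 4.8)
The plan is to establish \eqref{eq38} by transferring the problem to the nilpotent group itself and then running the Hardy--Littlewood circle method adapted to the noncommutative, step-two setting. First I would invoke Calder\'on's transference principle: since the group $\Gamma$ generated by $T_1,\dots,T_{d_1}$ is nilpotent, hence amenable, the $L^2(\mu)$ bound for the abstract maximal operator reduces to the corresponding $\ell^2(\Gamma)$ bound for the convolution maximal operator
\[
\sup_{N\ge 1}\Big|\frac1N\sum_{n=0}^{N-1} F\big(\gamma\, g(n)\big)\Big|,\qquad g(n)=T_1^{P_1(n)}\cdots T_{d_1}^{P_{d_1}(n)}\in\Gamma,
\]
provided the bound is uniform in the structure of $\Gamma$. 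Here the crucial simplification is the step-two hypothesis: all commutators $[T_i,T_j]$ are central, so via the Baker--Campbell--Hausdorff/Mal'cev coordinatization the element $g(n)$ has first-layer coordinates $P_1(n),\dots,P_{d_1}(n)$ and central second-layer coordinates that are explicit quadratic combinations of the $P_i(n)$. Thus $g(n)$ traces a polynomial orbit in $\Gamma$ of degree at most $2d_2$, and the whole problem becomes a maximal inequality for polynomial averages on a discrete two-step nilpotent group.

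The heart of the matter is then a Fourier-analytic decomposition of the averaging multiplier. I would pass to the Mal'cev completion $G$ of $\Gamma$ and use the fact that for a two-step nilpotent group the unitary dual splits into a horizontal abelian part $\widehat{\Gamma^{\mathrm{ab}}}\cong \mathbb T^{d_1}$ together with families of Schr\"odinger-type representations indexed by the central characters. On this dual the symbol of the averaging operator $A_N$ is a Weyl sum whose phase couples the linear frequencies of the abelianization to the quadratic central frequencies. Following the circle method, I would split the frequency space into major arcs (frequencies well approximated by rationals of small denominator) and minor arcs, carrying out the decomposition simultaneously in both layers.

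On the minor arcs I would use Weyl differencing to gain a power-saving decay $N^{-\delta}$ in the exponential sums; the step-two structure is exactly what keeps the commutator contributions central, so that after differencing they appear as Gauss sums in the central variable and remain controllable. On the major arcs I would factor the symbol into an arithmetic piece (a Gauss-sum-type factor summable over denominators) times a smooth archimedean piece that matches a continuous-parameter oscillatory-integral operator; the maximal function of the latter is handled by standard square-function and $r$-variation estimates, while the arithmetic scales are summed in $\ell^2$ without logarithmic loss by an Ionescu--Wainger-type multiplier theorem. A final lacunary-plus-interpolation (numerical-lemma/bootstrapping) argument upgrades the bound from a lacunary set of scales $N$ to all $N\ge 1$, yielding \eqref{eq38} with $C=C(d_1,d_2)$.

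The main obstacle, as I see it, is the major-arc multiplier summation in the noncommutative setting: producing an Ionescu--Wainger multiplier theorem compatible with the Schr\"odinger-representation (central) frequencies that controls all arithmetic scales simultaneously, while keeping the implied constant depending only on $d_1$ and $d_2$, and not on the particular structure constants of $\Gamma$ nor on $f$. Equally delicate is ensuring the minor-arc Weyl estimates are uniform across the two layers when the degrees $\deg P_i$ differ, which forces a careful bookkeeping of the effective degrees produced by the quadratic central coordinates.
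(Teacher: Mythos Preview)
The paper does not prove this statement at all: it is quoted verbatim as \cite[Theorem 1.2.(iii)]{IAMMS} and used as a black box in the subsequent arguments, with no proof or sketch offered. There is therefore nothing in the paper to compare your proposal against.

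For what it is worth, your outline is a reasonable high-level description of the Ionescu--Magyar--Mirek--Szarek machinery itself---Calder\'on transference to $\ell^2(\Gamma)$, Mal'cev coordinates turning the step-two orbit into a polynomial sequence, circle-method major/minor arc decomposition, Ionescu--Wainger multipliers on the major arcs, Weyl bounds on the minor arcs---and that is indeed roughly the architecture of the cited reference. But this is a substantial paper in its own right, and the obstacles you flag (a noncommutative Ionescu--Wainger theory uniform in the group's structure constants, uniform minor-arc bounds across layers of differing degree) are genuine and not resolved by the sketch; they are exactly what \cite{IAMMS} spends most of its length on. If your goal is to supply a self-contained proof rather than cite the result, you would effectively be reproducing large portions of that paper.
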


\subsection{Pointwise ergodic theorems for discrete-time actions}
First, we introduce Bourgain's polynomial ergodic theorem and a double recurrence theorem due to Krause.
\begin{thm}[{\cite[Theorem 2]{B2}}]\label{BPET}
	Let $(X,\X,\mu, T)$ be a measure preserving system. For any $P\in \R[n]$ and any $f\in L^{\infty}(\mu)$, the limit
	\begin{equation}
	\lim\limits_{N\to\infty}\E_{n\in I_N}f(T^{\lfloor P(n)\rfloor}x)
	\end{equation}
exists almost everywhere.
\end{thm}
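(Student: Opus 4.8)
\medskip

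The plan is to follow Bourgain's circle-method approach: transfer the question from the abstract system to the integer shift, and there analyze the relevant Fourier multipliers by separating their arithmetic and analytic parts. Throughout write $e(x)=e^{2\pi i x}$ and $A_N f(x)=\E_{n\in I_N}f(T^{P(n)}x)$; the substantial case is $\deg P\ge 2$, the case $\deg P\le 1$ being the classical Birkhoff theorem. First I would apply the Calder\'on transference principle to reduce everything to the model operators $\mathcal{A}_N g=g*\sigma_N$ on $\ell^2(\Z)$, where $\sigma_N=\frac{1}{N}\sum_{n\in I_N}\delta_{P(n)}$. It then suffices to prove, for these convolution operators, (a) the maximal inequality $\norm{\sup_N|\mathcal{A}_N g|}_{\ell^2}\le C\norm{g}_{\ell^2}$ and (b) a uniform oscillation inequality for the family $\{\mathcal{A}_N\}_N$; both transfer back to $(X,\X,\mu)$.

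The crucial observation is that the maximal inequality alone does \emph{not} yield pointwise convergence here: unlike the Birkhoff averages, the orbit is sampled along the sparse set $\{P(n)\}$ and there is no convenient dense subclass of $L^2(\mu)$ on which convergence of $A_Nf$ is transparent. It is the oscillation inequality --- a quantitative Cauchy criterion --- that actually delivers almost everywhere convergence. The heart of the matter is thus the analysis of the multiplier $\widehat{\sigma_N}(\theta)=\frac{1}{N}\sum_{n\in I_N}e(P(n)\theta)$, $\theta\in\mathbb{T}$, via a major/minor arc decomposition. On the minor arcs --- frequencies poorly approximable by rationals of small denominator --- Weyl's inequality gives a power saving $|\widehat{\sigma_N}(\theta)|\lesssim N^{-\delta}$, so that after the dyadic restriction $N=2^k$ the minor-arc contribution is summable and harmless for both (a) and (b).

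On a major arc about $a/q$ one approximates, by splitting $n$ into residues mod $q$,
\[
\widehat{\sigma_N}(a/q+\beta)\approx q^{-1}S(a,q)\,\Phi_N(\beta),
\]
where $S(a,q)=\sum_{r\bmod q}e(P(r)a/q)$ is a complete (Gauss-type) exponential sum and $\Phi_N(\beta)=\int_0^1 e(P(Nu)\beta)\,du$ is the multiplier of the continuous-parameter average. This factorizes the problem into a continuous part governed by $\Phi_N$, controlled by classical real-variable variation estimates for Euclidean averaging operators, and an arithmetic part, namely the sum over the resonant rationals $a/q$.

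The hard part will be to control the arithmetic part uniformly in $q$. I would organize the rationals by dyadic blocks $q\sim 2^s$, use the Gauss-sum bound $|S(a,q)|\lesssim q^{1-1/d}$ (with $d=\deg P$) to extract decay in $s$, and establish the required $\ell^2$ multiplier estimates with at most a logarithmic loss in $s$ --- precisely the step handled in the modern treatment by the Ionescu--Wainger multiplier theorem, and which in Bourgain's original argument is the most delicate point. Assembling the minor-arc smallness, the continuous-part variation bounds, and the arithmetic-part multiplier estimates yields the oscillation inequality (b); together with (a) and transference this gives the almost everywhere convergence of $\E_{n\in I_N}f(T^{P(n)}x)$ for every $f\in L^\infty(\mu)$, and in fact for every $f\in L^p(\mu)$ with $p>1$.

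\medskip
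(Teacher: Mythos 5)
The paper does not prove this statement at all: it is quoted as Theorem \ref{BPET} with a citation to Bourgain's work and is used as a black box (e.g.\ in the sketch of the special case in Subsection \ref{sample}), so there is no internal proof to compare against. Judged on its own terms, your proposal is a faithful and essentially correct outline of Bourgain's actual argument: the reduction of $\deg P\le 1$ to Birkhoff, Calder\'on transference to the convolution operators $g*\sigma_N$ on $\ell^2(\Z)$, the correct insistence that a maximal inequality alone is insufficient because no dense subclass of $L^2$ with transparent convergence is available (this is precisely why oscillation/variation inequalities are needed), the major/minor arc decomposition with Weyl's inequality on minor arcs, the factorization $\widehat{\sigma_N}(a/q+\beta)\approx q^{-1}S(a,q)\Phi_N(\beta)$ on major arcs, and the Gauss-sum decay $|S(a,q)|\lesssim q^{1-1/d}$ organizing the arithmetic part. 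Be aware, though, that what you have written is a road map rather than a proof: the two genuinely hard steps --- the uniform-in-$q$ $\ell^2$ multiplier estimates for the arithmetic part (Bourgain's original argument here, or Ionescu--Wainger in modern treatments) and the passage from dyadic $N$ to the full oscillation inequality along arbitrary increasing sequences --- are named but not executed, and they carry essentially all of the difficulty. Also, the $L^p$, $p>1$, extension you mention at the end goes beyond what \cite{B88} itself contains (the $L^2$ case suffices for the $L^\infty$ statement quoted here); if you intend to claim it, you should make clear it requires the later, more elaborate interpolation machinery.
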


%\begin{thm}\label{thm3-1}$($\cite[Main Theorem]{B4}$)$
%Let $(X,\X,\mu, T)$ be a measure preserving system. Then for any $a,b\in \Z$ and any $f_1,f_2\in L^{\infty}(\mu)$, the limit
%\begin{equation}\label{eq7}
%\lim\limits_{N\to\infty}\E_{n\in I_N}f_{1}(T^{an}x)f_{2}(T^{bn}x)
%\end{equation}
%exists almost everywhere.
%\end{thm}

In 2025, Krause gave an extension of Bourgain's double recurrence theorem \cite[Main Theorem]{B4}.

\begin{thm}[{\cite[Theorem 1.1]{krause2025}}] \label{thm-K}
Let $(X,\X,\mu,T)$ be a measure preserving system. Then for all $\alpha\in \R$, all $\gamma\in \Q$, and all $f_1,f_2\in L^{\infty}(\mu)$,
\begin{equation}\label{eqK}
\lim\limits_{N\to\infty}\E_{n\in I_N}f_{1}(T^{\lfloor \alpha n\rfloor}x)f_{2}(T^{\lfloor \gamma n\rfloor}x)
\end{equation}
exists almost everywhere.
\end{thm}

The following theorem can be viewed as a multi-parameter extension of pointwise ergodic theorem along polynomials with integer coefficients \cite[Theorem 1]{B2}. %Theorem \ref{BPET}.

\begin{thm}[{\cite[Theorem 1.5]{BMSW}, \cite{D51, Z51}}]\label{thm3-2}
Let $(X,\X,\mu,T)$ be a measure preserving system. Then for any $k\in \N$, any $P\in \Z[n_1,\ldots,n_k]$ and any $f\in L^{\infty}(\mu)$, the limit
\begin{equation}\label{eq8}
 \lim_{\min\{N_1,\ldots, N_k\}\to\infty} \E_{(n_1,\ldots, n_k)\in \prod_{j=1}^k  I_{N_j}} f(T^{P(n_1,\ldots, n_k)}x)
\end{equation}
exists almost everywhere. If $P(n_1,\ldots,n_k)=n_{1}+\cdots+n_{k}$, then the related limit function is $\E_{\mu}(f|\mathcal{I}(T))$.
\end{thm}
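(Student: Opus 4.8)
The plan is to prove this by the Fourier-analytic circle-method strategy underlying \cite{BMSW}, reducing the almost everywhere convergence on an arbitrary system to a maximal inequality together with a uniform oscillation inequality for the shift on $\Z$, and then analyzing the associated exponential-sum multipliers. \textbf{Transference.} By the Calder\'on transference principle it suffices to treat the universal model $X=\Z$ with $T$ the unit shift, where
$$A_{\vec N}f(x)=\E_{\vec n\in \prod_{j=1}^{k}I_{N_j}}f(x-P(\vec n))$$
is convolution with the finitely supported kernel $K_{\vec N}=\E_{\vec n}\delta_{P(\vec n)}$, whose Fourier transform on the torus $\mathbb T$ is the exponential sum $m_{\vec N}(\xi)=\E_{\vec n\in \prod_{j=1}^{k}I_{N_j}}e^{2\pi i P(\vec n)\xi}$. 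Once the shift model satisfies both the maximal bound $\|\sup_{\vec N}|A_{\vec N}f|\|_{\ell^2}\lesssim \|f\|_{\ell^2}$ and the oscillation bound discussed below, a.e. convergence for \eqref{eq8} on a general $(X,\X,\mu,T)$ follows.

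\textbf{Circle-method decomposition.} I would split $\mathbb T$ into major arcs about rationals $a/q$ with small denominator $q$ and the complementary minor arcs. On the minor arcs, multi-dimensional Weyl inequalities yield a power-saving estimate $|m_{\vec N}(\xi)|\lesssim (\min_j N_j)^{-\delta}$ for some $\delta>0$ depending on $\deg P$. On the major arcs, $m_{\vec N}$ factors, up to an acceptable error, as the product of an arithmetic Gauss-sum factor in $q$ and a continuous oscillatory-integral factor coinciding with the multiplier of the real-variable average; the latter is governed by the continuous-time theory, cf. Theorem \ref{thm2}.

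\textbf{Maximal and oscillation inequalities.} I would then assemble these pieces. The minor-arc part is summed by a square-function / Rademacher--Menshov argument exploiting the power decay. The major-arc part is controlled by pairing the arithmetic factors, via a multi-parameter Ionescu--Wainger multiplier theorem that tames the sum over denominators $q$, with the continuous maximal and oscillation bounds. Crucially, passing from the maximal inequality to honest pointwise convergence along the net $\min\{N_1,\ldots,N_k\}\to\infty$ is achieved through a uniform multi-parameter oscillation seminorm estimate: one bounds the relevant $\ell^2$ oscillation norm by $C\|f\|_{\ell^2}$ with $C$ independent of the chosen increasing sequence of lattice points $\vec N^{(1)},\vec N^{(2)},\ldots$, which forces $(A_{\vec N}f)$ to be a.e. Cauchy and hence convergent.

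\textbf{The linear case and the main obstacle.} When $P(n_1,\ldots,n_k)=n_1+\cdots+n_k$ the operators factor as $A_{\vec N}=M_{N_1}\circ\cdots\circ M_{N_k}$ with $M_N g=\E_{n\in I_N}g\circ T^n$, since all factors are powers of the single $T$ and hence commute; iterating Birkhoff's pointwise theorem (the Dunford--Zygmund theorem \cite{D51,Z51}) and using the one-parameter maximal inequality to move limits through the iterated averages identifies the limit as $\E_{\mu}(f|\mathcal{I}(T))$. The main obstacle is the maximal/oscillation step: multi-parameter maximal and oscillation estimates genuinely fail to follow from iterating one-parameter bounds---already two-parameter strong maximal phenomena are delicate---so controlling the oscillation uniformly over all increasing lattice sequences requires honestly multi-parameter harmonic analysis, namely a multi-parameter Ionescu--Wainger theory for the arithmetic factors together with multi-parameter square-function estimates to sum the minor arcs, and this is the technical heart of \cite{BMSW}.
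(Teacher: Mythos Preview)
The paper does not prove this theorem: it is stated in the preliminaries (Section~\ref{Section-pre}) as a known result, cited from \cite{BMSW} for the general polynomial case and from \cite{D51, Z51} for the linear case $P(n_1,\ldots,n_k)=n_1+\cdots+n_k$, and is used as a black box in the proof of Theorem~C. There is therefore no proof in the paper to compare your proposal against.

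Your outline is a faithful high-level sketch of the strategy in \cite{BMSW} itself---transference to the integer shift, circle-method decomposition of the multiplier, multi-parameter Ionescu--Wainger theory on the major arcs, Weyl bounds on the minor arcs, and a uniform oscillation estimate to upgrade the maximal inequality to pointwise convergence---together with the Dunford--Zygmund iteration for the linear case. As a summary of where the result comes from this is accurate, though of course each bullet conceals substantial technical work that would need to be carried out in full.
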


We will need a special case of the following theorem in the proofs.

\begin{thm}[{\cite[Theorem 1.2.(ii)]{IAMMS}}]\label{thm3-3}
Let $(X,\X,\mu)$ be a Lebesgue probability space. Let $T_1,\ldots, T_d:X\rightarrow X$ be a family of invertible measure preserving transformations, that generates a nilpotent group of step two. Then for any $P_1,\ldots, P_d\in \Z[n]$ and any $f\in L^{\infty}(\mu)$, the limit
\begin{equation}\label{eq9} \lim\limits_{N\to\infty}\E_{n\in I_N}f(T_{1}^{P_{1}(n)}\cdots T_{d}^{P_{d}(n)}x)
\end{equation}
exists almost everywhere.
\end{thm}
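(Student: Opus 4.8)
The maximal inequality of Theorem~\ref{thm7} already controls $\sup_{N\ge 1}\big|\E_{n\in I_N}f(T_1^{P_1(n)}\cdots T_d^{P_d(n)}x)\big|$ in $L^2(\mu)$, so the remaining task is to upgrade boundedness of the maximal function to genuine a.e. convergence. The naive route --- proving convergence on a dense subclass of $L^2(\mu)$ and transferring via the maximal bound --- is not available here, since for these polynomial nilpotent averages there is no subclass on which convergence is transparent (constants and coboundaries are nowhere near enough). The plan is therefore to prove a quantitative \emph{oscillation inequality}, which implies a.e. convergence directly and subsumes Theorem~\ref{thm7}. By Calder\'on's transference principle it suffices to establish such an inequality for the canonical action of the step-two nilpotent group $G=\langle g_1,\ldots,g_d\rangle$ on $\ell^2(G)$, where the averages become convolution operators $A_N\colon h\mapsto h*\sigma_N$ with $\sigma_N$ the normalized counting measure on the polynomial orbit $\{g_1^{P_1(n)}\cdots g_d^{P_d(n)}:n\in I_N\}$. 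This separates the measure-theoretic content from the arithmetic/analytic core.

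Next I would analyze the multipliers $\widehat{\sigma_N}$ through the Fourier analysis of $G$. Because $G$ has step two, its unitary dual is described by Kirillov theory: one-dimensional characters together with infinite-dimensional representations parametrized by the symplectic form carried by the commutator subgroup. On the abelian part the symbol is a classical Weyl exponential sum $\E_{n\in I_N}e^{2\pi i\langle\xi,(P_1(n),\ldots,P_d(n))\rangle}$; on the Heisenberg-type part it becomes an \emph{operator-valued} exponential sum, in which the products $P_i(n)P_j(n)$ enter through the commutator relations.

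For each piece I would run the circle method, splitting $A_N$ into major-arc and minor-arc contributions. On the minor arcs, uniform Weyl/van der Corput estimates yield power-saving decay and hence a summable oscillation bound. On the major arcs, a Gauss-sum factorization approximates $A_N$ by a slowly varying ``continuous'' averaging operator whose $r$-variation is controlled by classical L\'epingle-type estimates, after which an Ionescu--Wainger type multiplier theorem glues the arithmetic scales together. Combining the two regimes should produce an oscillation estimate of the form
\begin{equation*}
\Big\|\big(\textstyle\sup_{N_j\le N<N_{j+1}}|A_N h-A_{N_j}h|\big)_{j}\Big\|_{\ell^2_j L^2}\le C\,o(\sqrt{J})\,\|h\|_2
\end{equation*}
for every increasing sequence $N_0<\cdots<N_J$, which forces $\{A_N h\}$ to be a.e. Cauchy and hence convergent; transferring back yields the theorem for all $f\in L^\infty(\mu)$.

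The main obstacle is the infinite-dimensional representation piece created by the step-two noncommutativity. There the scalar Weyl inequality must be replaced by a quantitative bound for operator-valued oscillatory sums built from the symplectic commutator form, and these bounds must be made uniform in the representation parameter and compatible with the oscillation norm. Managing this noncommutative exponential-sum analysis, and then assembling the resulting pieces across all arithmetic and analytic scales, is where essentially all the difficulty of \cite{IAMMS} resides.
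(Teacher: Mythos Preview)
The paper does not prove this statement at all: Theorem~\ref{thm3-3} is quoted verbatim from \cite[Theorem~1.2.(ii)]{IAMMS} and used as a black box in the proofs of Theorems~B, C, and~D. There is therefore no ``paper's own proof'' to compare your proposal against.

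That said, what you have written is a reasonable high-level sketch of the strategy actually carried out in \cite{IAMMS}: transference to the model group, an oscillation/variation inequality (which does strictly more than the maximal inequality of Theorem~\ref{thm7}), circle-method decomposition of the multipliers, Ionescu--Wainger theory on the major arcs, and Weyl-type power saving on the minor arcs, with the genuinely new work being the handling of the noncommutative (step-two) piece of the representation theory. As a description of where the difficulty lies, your last paragraph is accurate. As a \emph{proof}, however, it is only an outline: the substantive content---the operator-valued exponential-sum bounds uniform in the representation parameter, and the assembly of scales---is exactly what takes the bulk of \cite{IAMMS}, and you have (correctly) identified but not supplied it. For the purposes of the present paper, simply citing \cite{IAMMS} is what is done and what suffices.
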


\subsection{Pointwise ergodic theorems for measurable flows}

%First, we introduce the continuous analogues of Krause's double recurrence theorem.
	
First,  we apply Krause's double recurrence theorem (Theorem \ref{thm-K}) to derive the following result.

\begin{thm}\label{thm6}
		Let $(X,\X,\mu, (T^{t})_{t\in \R})$ be a measurable flow. Then for any $a\in \R$ and any $f_1,f_2\in L^{\infty}(\mu)$, the limit
		\begin{equation}\label{eq26}
		\lim\limits_{M\to\infty}\frac{1}{M}\int_{0}^{M}f_{1}(T^{t}x)f_{2}(T^{at}x)dt
		\end{equation} exists almost everywhere.
\end{thm}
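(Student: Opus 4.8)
The final statement to prove is Theorem~\ref{thm6}, the continuous-time analogue of Bourgain's double recurrence theorem: for a measurable flow $(X,\X,\mu,(T^t)_{t\in\R})$, integers $a,b$, and $f_1,f_2\in L^\infty(\mu)$, the average $\frac{1}{M}\int_0^M f_1(T^{at}x)f_2(T^{bt}x)\,dt$ converges $\mu$-a.e.

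\textbf{Approach.} The plan is to transfer the known discrete-time result (Theorem~\ref{thm3-1}, Bourgain's double recurrence theorem) to the continuous setting by slicing the integral over $[0,M]$ into unit intervals and comparing the flow along a continuous orbit with the iterates of the time-one map $T:=T^1$. First I would pass to a topological model (Theorem~\ref{thm1}), so that without loss of generality $X$ is a compact metric space, $(T^t)$ is a continuous flow, and for a dense class of functions we may assume $f_1,f_2$ are continuous. Continuity lets us control the oscillation of $f_i(T^{\cdot}x)$ along the flow direction, which is the mechanism that turns a continuous-time average into a Riemann-sum comparison with a discrete average.

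\textbf{Key steps.} First, reduce to continuous $f_1,f_2$: establish a maximal inequality for the continuous-time sup $\sup_{M>0}\big|\frac{1}{M}\int_0^M f_1(T^{at}x)f_2(T^{bt}x)\,dt\big|$ (controlled in $L^2$ by $\|f_1\|_\infty\|f_2\|_\infty$ via a straightforward estimate, or imported from the flow maximal inequalities), so that the set of $(f_1,f_2)$ for which a.e.\ convergence holds is closed; then it suffices to prove convergence on the dense set of continuous functions. Second, fix continuous $f_1,f_2$ and write $\frac{1}{M}\int_0^M = \frac{1}{M}\sum_{k=0}^{\lfloor M\rfloor -1}\int_0^1 f_1(T^{a(k+s)}x)f_2(T^{b(k+s)}x)\,ds$ plus a negligible tail. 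For fixed $s\in[0,1)$ the inner integrand, as a function of $k$, is $F_s(T^{ak}x_s)G_s(T^{bk}x_s)$-type data, i.e.\ a discrete double average to which Theorem~\ref{thm3-1} applies with the time-one map $T^1$ and functions $f_i(T^{\cdot s}\cdot)$. By Fubini and dominated convergence (justified by the maximal function from the first step being in $L^2$), integrate the a.e.\ limit over $s\in[0,1)$ to obtain convergence of the full average.

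\textbf{Main obstacle.} The delicate point is the interchange of the $s$-integral with the $N\to\infty$ limit in the discrete average: for each $s$ one gets a.e.\ convergence from Theorem~\ref{thm3-1}, but a.e.\ convergence of $\int_0^1(\cdots)\,ds$ requires either a uniform (in $s$) maximal bound or a careful Fubini argument producing a single full-measure set of $x$ valid simultaneously. I expect this to be the crux; it is handled by the dominated convergence theorem once one has the $L^2$ maximal inequality of the first step to dominate, combined with the observation that the map $(s,x)\mapsto$ (discrete limit) is jointly measurable. Continuity of the flow guarantees the Riemann-sum remainder $\frac{1}{M}\int_{\lfloor M\rfloor}^{M}$ and the uniform equicontinuity needed to replace $T^{a(k+s)}$ faithfully are both controlled, so no further hypotheses beyond those in the statement are required.
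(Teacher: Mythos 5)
The paper does not prove Theorem \ref{thm6} at all: it is imported verbatim as \cite[Theorem 8.30]{BLM}, so there is no internal proof to compare against. Your proposal essentially reconstructs the Bergelson--Leibman--Moreira transference argument, and its core is correct: slice $[0,M]$ into unit intervals, observe that for fixed $s\in[0,1)$ one has $f_1(T^{a(n+s)}x)f_2(T^{b(n+s)}x)=(f_1\circ T^{as})\big((T^1)^{an}x\big)\,(f_2\circ T^{bs})\big((T^1)^{bn}x\big)$, apply Theorem \ref{thm3-1} to the time-one map with the shifted functions, and then pass the a.e.\ limit through $\int_0^1(\cdot)\,ds$ by joint measurability, Fubini, and dominated convergence. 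Two remarks on efficiency and precision. First, the reduction to continuous functions via topological models is superfluous here: because the shift $s$ is absorbed \emph{exactly} into the functions $f_i\circ T^{as}$ (no Riemann-sum error to control), the Fubini--DCT argument works directly for arbitrary $f_1,f_2\in L^\infty(\mu)$, with the constant $\norm{f_1}_\infty\norm{f_2}_\infty$ serving as the dominating function on the finite measure space $[0,1)$; equicontinuity is genuinely needed in the paper's Theorems B--D, where one compares $T^{(n+t)^\gamma}$ with $T^{n^\gamma}$, but not here. Second, if you do keep the density step, the inequality you cite --- the trivial pointwise bound of the bilinear maximal function by $\norm{f_1}_\infty\norm{f_2}_\infty$ --- is not the one that makes the set of good pairs closed; you need the \emph{linear} maximal inequality $\norm{\sup_{M}\frac1M\int_0^M|h|(T^{at}x)\,dt}_2\le C\norm{h}_2$ applied to the differences $f_i-h_{i,k}$ while the other factor is held in $L^\infty$ (this is the pattern of the paper's Reduction Calculation I, using Theorem \ref{thm2}). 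With either of these repairs understood, the argument goes through.
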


\begin{proof}
	By Theorem \ref{thm1}, we can assume that $(X,(T^{t})_{t\in \R})$ is a continuous flow, $\X$ is the Borel $\sigma$-algebra of $X$, and $\mu$ is a $(T^{t})_{t\in \R}$-invariant Borel probability measure on $X$. By Theorem
	\ref{thm2}, it suffices to prove that the limit in \eqref{eq26} exists almost everywhere for all $f_1,f_2\in C(X)$.
	
	Let $f_1,f_2\in C(X)$ such that $\norm{f_1}_{\infty}\le 1$ and $\norm{f_2}_{\infty}\le 1$. Then, for any $\ep>0$, there exists $\eta(\ep)\in (0,1)$ such that for any $x\in X$ and any $t\in (-\eta(\ep),\eta(\ep)$, we have  $$|f_{1}(T^{t}x)-f_{1}(x)|<\ep, |f_{2}(T^{t}x)-f_{2}(x)|<\ep.$$
	This property implies that for any $k\in \N$, there exists $\delta_{k}\in (0,1/k)$ such that for any $x\in X$,
	\begin{align}
		& \limsup_{N\to\infty}\left|\frac{1}{N\delta_k}\int_{0}^{N\delta_k}f_{1}(T^{t}x)f_{2}(T^{at}x)dt-\frac{1}{N}\sum_{n=0}^{N-1}f_{1}((T^{\delta_k})^{n}x)f_{2}((T^{\delta_k})^{\lfloor an\rfloor}x)\right|\le \frac{3}{k} \label{eq2-1}
	\end{align}
	By combining Theorem \ref{thm-K}, \eqref{eq2-1} and the fact that $\delta_k\to 0$ as $k\to\infty$, we conclude that the limit $$	\lim\limits_{M\to\infty}\frac{1}{M}\int_{0}^{M}f_{1}(T^{t}x)f_{2}(T^{at}x)dt$$ exists almost everywhere. The proof is complete.
\end{proof}	
In \cite{FN}, Frantzikinakis built a pointwise ergodic theorem of measurable flows without any commutativity.
To prove (1) of Theorem D, we need the following result, which is a special case of \cite[Theorem 1.11]{FN}.

\begin{thm}\label{thm4}
Let $(X,\X,\mu, (T_{i}^{t})_{t\in \R}),1\le i\le k$ be measurable flows, where $k\in \N$. Then for any distinct positive real numbers $c_1,\ldots,c_k$ and any $f_1, \ldots, f_k\in L^{\infty}(\mu)$,
	  	\begin{equation}\label{eq3}
	  		\lim\limits_{M\to\infty}\frac{1}{M}\int_{0}^{M}f_{1}(T_{1}^{t^{c_1}}x)\cdots f_{k}(T_{k}^{t^{c_k}}x)dt=\prod_{i=1}^{k}\E_{\mu}(f_{i}|\mathcal{I}((T_{i}^{t})_{t\in\R}))(x)
	  	\end{equation}
almost everywhere.
	  \end{thm}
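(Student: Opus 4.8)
The plan is to reduce the theorem to a single ``vanishing'' statement, to establish the mean version of that statement by a van der Corput / PET reduction in the time variable, and to upgrade mean convergence to a.e.\ convergence by a multilinear maximal inequality obtained from the single-flow maximal theorem via H\"older's inequality. First I would isolate the main term. Write $\mathcal{F}_i=\mathcal{I}((T_i^t)_{t\in\R})$, put $h_i=\E_\mu(f_i|\mathcal{F}_i)$ and $g_i=f_i-h_i$, so that $\E_\mu(g_i|\mathcal{F}_i)=0$. Since $h_i$ is $(T_i^t)$-invariant, Fubini's theorem gives $h_i(T_i^{t^{c_i}}x)=h_i(x)$ for a.e.\ $(t,x)$, whence
\begin{equation*}
\frac{1}{M}\int_0^M\prod_{i=1}^k h_i(T_i^{t^{c_i}}x)\,dt=\prod_{i=1}^k h_i(x)\qquad\text{for a.e. }x,
\end{equation*}
which is exactly the asserted limit. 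Expanding $\prod_i f_i=\prod_i(h_i+g_i)$ by multilinearity, every term other than $\prod_i h_i$ contains a factor $g_j$ with $\E_\mu(g_j|\mathcal{F}_j)=0$, so it suffices to prove the \emph{Vanishing Lemma}: if $\E_\mu(f_j|\mathcal{F}_j)=0$ for some $j$, then $\frac{1}{M}\int_0^M\prod_i f_i(T_i^{t^{c_i}}x)\,dt\to0$, for arbitrary bounded $f_i$ with $i\ne j$.

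For the mean ($L^2$) version of the Vanishing Lemma I would run a van der Corput reduction in $t$. Writing $u(t)=\prod_i f_i(T_i^{t^{c_i}}\cdot)\in L^2(\mu)$, the continuous van der Corput inequality bounds $\limsup_M\|\frac{1}{M}\int_0^M u(t)\,dt\|_2^2$ by an average over $h$ of the self-correlations $\frac{1}{M}\int_0^M\langle u(t+h),u(t)\rangle\,dt$, which involve the difference exponents $(t+h)^{c_i}-t^{c_i}\sim c_i h\,t^{c_i-1}$. Because the $c_i$ are pairwise distinct, these difference exponents have pairwise distinct growth, so a Hardy--field PET induction strictly lowers the complexity of the exponent family at each step and terminates at an average along a single flow; the controlling quantity is a Host--Kra-type seminorm of $f_j$ relative to $(T_j^t)$, and the distinctness of the exponents collapses the relevant characteristic factor onto $\mathcal{F}_j$, so this seminorm vanishes precisely because $\E_\mu(f_j|\mathcal{F}_j)=0$. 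The base case $k=1$, namely $\frac{1}{M}\int_0^M f(T^{t^c}x)\,dt\to\E_\mu(f|\mathcal{I}((T^t)_{t\in\R}))$, follows from the ergodic theorem for flows: the substitution $s=t^{c}$ turns the average into a weighted Birkhoff average with a regularly varying power weight, which converges to the same limit.

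Finally I would upgrade to a.e.\ convergence. H\"older's inequality in $t$ gives the pointwise domination
\begin{equation*}
\sup_{M}\Big|\frac{1}{M}\int_0^M\prod_{i=1}^k f_i(T_i^{t^{c_i}}x)\,dt\Big|\le\prod_{i=1}^k\Big(\sup_{M}\frac{1}{M}\int_0^M|f_i|^k(T_i^{t^{c_i}}x)\,dt\Big)^{1/k},
\end{equation*}
and each single-flow maximal operator on the right is bounded on $L^2(\mu)$ by the continuous Stein--Wainger / power-average maximal theorem (the $c_i\in\N$ case being Theorem \ref{thm2}); a further application of H\"older across the product yields an $L^2$ bound for the full multilinear maximal function. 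Combining this maximal inequality with the mean convergence just established, a standard density argument --- approximating the mean-zero factor $g_j$ in $L^2$ by functions along $(T_j^t)$ for which a.e.\ convergence to $0$ is already available --- promotes the Vanishing Lemma, and hence the whole theorem, to hold $\mu$-a.e.

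The main obstacle is the mean Vanishing Lemma for \emph{non-commuting} flows: without commutativity one cannot perform the simultaneous change of variables that drives classical PET, so the van der Corput self-correlations must be processed through Frantzikinakis' seminorm-control and joint-ergodicity criteria rather than by a naive substitution. The distinctness of the exponents $c_i$ is exactly the feature that makes the induction terminate and forces the characteristic factors to descend to the individual invariant $\sigma$-algebras $\mathcal{F}_i$, and it is the point where the hypothesis that the $c_i$ be pairwise distinct is genuinely used.
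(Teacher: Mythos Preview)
The paper does not prove Theorem~\ref{thm4}; it is quoted as a special case of \cite[Theorem~1.11]{FN}, so there is no in-paper argument to compare against, only the strategy in \cite{FN}. Your $L^2$ step (multilinear splitting into invariant and mean-zero parts, then van der Corput/PET using that the $c_i$ are distinct so the differenced exponents again have distinct growth, collapsing the characteristic factor onto $\mathcal{F}_j$) is the right shape for the mean statement and is essentially what Frantzikinakis does there.

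The genuine gap is the a.e.\ upgrade. A maximal inequality together with $L^2$ convergence is \emph{not} by itself a proof of pointwise convergence: the Banach principle only tells you that the set of $g$ for which a.e.\ convergence holds is closed, so you still owe a dense class on which a.e.\ convergence is \emph{already known}. Your sentence ``approximating the mean-zero factor $g_j$ in $L^2$ by functions along $(T_j^t)$ for which a.e.\ convergence to $0$ is already available'' never names that class, and this is exactly the hard part. The natural candidate, coboundaries $h-T_j^{s}h$, does not close the argument: after the substitution $u^{c_j}=t^{c_j}+s$ in the shifted piece one is left with pointwise errors of the form $|f_i(T_i^{t^{c_i}}x)-f_i(T_i^{u^{c_i}}x)|$ with $u^{c_i}-t^{c_i}\to 0$, and for merely bounded measurable $f_i$ (with no continuity along $(T_i^t)$) this difference cannot be controlled a.e.

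Frantzikinakis' actual proof in \cite{FN} does not try to pass from mean to pointwise by density. It runs an induction on the number of factors \emph{directly at the pointwise level}: one freezes the slowest exponent on a mesoscopic time interval while the faster exponents equidistribute, combined with the change-of-variables Lemma~\ref{lem1} of this paper, and appeals to the single-flow pointwise theorem at the base. This ``peeling'' mechanism is precisely what the present paper imitates in its own proofs. Either that pointwise induction, or alternatively a passage to topological models so that the $f_i$ become continuous and the coboundary cancellation can be made rigorous, is what your sketch is missing.
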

%\subsection{Several technique lemmas}
	
%\begin{lemma}\label{lem2} Let $D\in \N$. Let $(Y,(R^t)_{t\in\R^D})$ be a continuous flow and $f:Y\rightarrow \C$ be continuous. Then for any $\epsilon>0$, there exists $\eta>0$ such that for any $y\in Y$, any $t\in B^{D}_{\eta}(0)$, one has $$|f(R^{t}y)-f(y)|<\epsilon.$$ \end{lemma}
	
%\begin{lemma}\label{lem3} Let $\alpha\in (0,1)$. Let $(n+t)^{\alpha}=n^{\alpha}+E_{n}(t)$ for any $n\in \N,t\in [0,1]$. Then for any $\epsilon>0$, there exists $L_0\in \N$ such that for any $n>L_0$ and any $t\in [0,1]$, $|E_{n}(t)|<\epsilon$. \end{lemma}
	
%\begin{proof} Clearly, we know that for any $n>1$ and any $t\in [0,1]$, $|E_{n}(t)|<100n^{\alpha-1}$. This finishes the proof.	\end{proof}

\section{Proofs of Theorem B and Theorem C}\label{section-BC}
	
In this section, we present the proofs for Theorem B and Theorem C. Given the complexity of these proofs, we will provide a sketch of the proof for a special case of Theorem A to illustrate how our method works before presenting the detailed proofs of Theorem B and Theorem C.

\subsection{Two lemmas}

The following lemma is a special case of \cite[Lemma 7.2]{FN}.
\begin{lemma}\label{lem1}
	Let $\delta>0$. Let $f\in L^{\infty}(m_{\R})$, where $m_{\R}$ is the Lebesgue measure on $\R$. If the limit $\displaystyle \lim\limits_{M\to\infty}\frac{1}{M}\int_{0}^{M}f(t)dt$ exists,
	then the limit
	$$\lim\limits_{M\to\infty}\frac{1}{M}\int_{0}^{M}f(t^\delta)dt$$ exists and the two limits are equal.
\end{lemma}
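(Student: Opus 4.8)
The plan is to strip the left-hand average of its apparent weight by the change of variables $s=t^{\delta}$, and then transfer the hypothesis through an integration by parts. Write $L=\lim_{M\to\infty}\frac1M\int_0^M f(t)\,dt$ and set $F(s)=\int_0^s f(u)\,du$; the hypothesis says precisely that $F(s)=Ls+R(s)$ with $R(s)=o(s)$ as $s\to\infty$, while the boundedness of $f$ gives $F(s)=O(s)$, hence $R(s)=O(s)$, near $0$. Substituting $s=t^{\delta}$ (so $t=s^{1/\delta}$, $dt=\frac1\delta s^{1/\delta-1}\,ds$) turns the target average into
\[
\frac1M\int_0^M f(t^{\delta})\,dt=\frac{1}{\delta M}\int_0^{M^{\delta}}f(s)\,s^{1/\delta-1}\,ds,
\]
so it suffices to show the right-hand side tends to $L$.

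Next I would integrate by parts, treating $f(s)\,ds=dF(s)$ and differentiating the smooth weight $s^{1/\delta-1}$. On $[\eta,M^{\delta}]$ this gives
\[
\int_\eta^{M^{\delta}}f(s)\,s^{1/\delta-1}\,ds=\Big[s^{1/\delta-1}F(s)\Big]_\eta^{M^{\delta}}-\Big(\tfrac1\delta-1\Big)\int_\eta^{M^{\delta}}F(s)\,s^{1/\delta-2}\,ds .
\]
Letting $\eta\to0^{+}$, the lower boundary term $\eta^{1/\delta-1}F(\eta)=O(\eta^{1/\delta})$ vanishes and the integral converges at $0$, both because $F(s)=O(s)$ and $1/\delta>0$. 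Substituting $F(s)=Ls+R(s)$, writing $N=M^{\delta}$ so that $N^{1/\delta}=M$, and using $\int_0^{N}s^{1/\delta-1}\,ds=\delta N^{1/\delta}$, the two contributions proportional to $L$ (the upper boundary term $LM$ and the weighted integral $L(\delta-1)M$) combine to $L\delta M$. Dividing by $\delta M$, the main contribution is therefore exactly $L$, up to the remainder coming from $R$.

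The only point requiring care, and the \emph{main obstacle}, is to show that the remainder contribution $\frac{1}{M}\int_0^{M^{\delta}}R(s)\,s^{1/\delta-2}\,ds$ tends to $0$; this is the statement that the power weight $s^{1/\delta-2}$ does not amplify the $o(s)$ error beyond what the normalization absorbs. Given $\ep>0$, choose $S_0$ with $|R(s)|\le \ep s$ for $s\ge S_0$. The integral over $[0,S_0]$ is a fixed finite constant, since $R(s)\,s^{1/\delta-2}=O(s^{1/\delta-1})$ is integrable at $0$, so its contribution vanishes after division by $M\to\infty$; the integral over $[S_0,M^{\delta}]$ is bounded by $\ep\int_0^{M^{\delta}}s^{1/\delta-1}\,ds=\ep\delta M$, giving a contribution of at most $\ep\delta$. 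Sending $M\to\infty$ and then $\ep\to0$ forces this limit to be $0$. Combined with the previous paragraph, this yields $\frac1M\int_0^M f(t^{\delta})\,dt\to L$, which gives both the existence of the limit and its equality with $\lim_{M\to\infty}\frac1M\int_0^M f(t)\,dt$.
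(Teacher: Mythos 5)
Your argument is correct. Note first that the paper does not prove this lemma at all: it is stated as a special case of Lemma 7.2 of Frantzikinakis \cite{FN} and simply cited, so there is no in-paper proof to compare against. Your proposal supplies a self-contained elementary argument: the substitution $s=t^{\delta}$ converts the target into the weighted average $\frac{1}{\delta M}\int_0^{M^{\delta}}f(s)s^{1/\delta-1}\,ds$, and integration by parts against $F(s)=\int_0^s f$ (the continuous analogue of Abel summation) transfers the hypothesis $F(s)=Ls+o(s)$ into the conclusion. The bookkeeping is right: the boundary term at $0$ and the integrability of $F(s)s^{1/\delta-2}$ near $0$ are both handled by $F(s)=O(s)$ together with $1/\delta>0$, and the two $L$-contributions $LM$ and $L(\delta-1)M$ do sum to $L\delta M$, which the normalization $\frac{1}{\delta M}$ turns into exactly $L$. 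The $\varepsilon$-splitting of the remainder integral at a threshold $S_0$ is the standard and correct way to control the $o(s)$ error against the power weight. The one step you pass over silently is the remainder in the upper boundary term, $(M^{\delta})^{1/\delta-1}R(M^{\delta})=M\cdot R(M^{\delta})/M^{\delta}=o(M)$; this is immediate from $R(s)=o(s)$, but it should be said. With that one line added, the proof is complete and works for every $\delta>0$ (including $\delta>1$, where the weight $s^{1/\delta-1}$ blows up at the origin but remains integrable).
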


The following lemma, which is a corollary of Theorem \ref{thm2}, will be used in the proofs of Theorem B and $(1)$ of Theorem D.

\begin{lemma}\label{lem4}
Let $(X,\X,\mu, (T^{t})_{{ t}\in \R})$ be a measurable flow and $\alpha\in (0,1]$. Then there exists a constant $C=C(\alpha)>0$ such that for any $f\in L^{\infty}(\mu)$, one has
\begin{equation}\label{eq23}
\norm{\sup_{M\in\R_+}\Big|\frac{1}{M}\int_{0}^{M}f(T^{t^{\alpha}}x)dt\Big|}_2\le C\norm{f}_2.
		\end{equation}
	\end{lemma}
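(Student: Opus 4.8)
The plan is to reduce the non-polynomial weight $t^{\alpha}$ to the ordinary (degree one) flow average, to which Theorem \ref{thm2} applies directly with $k=D=1$ and $\mathcal{P}=\{P_1\}$, $P_1(t)=t$. The point is that since $\alpha\in(0,1]$ need not be an integer, $t^{\alpha}\notin\R[t]$, so one cannot invoke Theorem \ref{thm2} verbatim; the change of variables $s=t^{\alpha}$ will instead turn the average into a \emph{power-weighted} average of the ordinary ergodic averages, and the weight exponent being nonnegative is exactly what lets me control it by the ordinary ergodic maximal function.

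Concretely, set $\gamma:=1/\alpha\ge 1$. Substituting $s=t^{\alpha}$ (so $t=s^{\gamma}$, $dt=\gamma s^{\gamma-1}\,ds$) and writing $N:=M^{\alpha}$ gives
\[
\frac1M\int_0^M f(T^{t^{\alpha}}x)\,dt=\frac{\gamma}{N^{\gamma}}\int_0^N f(T^s x)\,s^{\gamma-1}\,ds .
\]
Introducing $F(s):=\int_0^s f(T^u x)\,du$ and $\mathcal A_s(x):=\tfrac1s F(s)$ for the ordinary average at scale $s$, integration by parts yields
\[
\frac{\gamma}{N^{\gamma}}\int_0^N f(T^s x)\,s^{\gamma-1}\,ds=\gamma\,\mathcal A_N(x)-\frac{\gamma(\gamma-1)}{N^{\gamma}}\int_0^N s^{\gamma-1}\mathcal A_s(x)\,ds ,
\]
the boundary term at $s=0$ vanishing because $|F(s)|\le\norm{f}_{\infty}\,s$ while $\gamma\ge1$.

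Next I would let $\mathcal A^*(x):=\sup_{s>0}|\mathcal A_s(x)|$ denote the ordinary ergodic maximal function of $(T^t)_{t\in\R}$. Since $s^{\gamma-1}\ge0$ and $\frac{1}{N^{\gamma}}\int_0^N s^{\gamma-1}\,ds=\frac1\gamma$, the identity above gives the pointwise bound
\[
\sup_{M>0}\Big|\frac1M\int_0^M f(T^{t^{\alpha}}x)\,dt\Big|\le\big(\gamma+(\gamma-1)\big)\,\mathcal A^*(x)=(2\gamma-1)\,\mathcal A^*(x).
\]
Applying Theorem \ref{thm2} with $k=D=1$ and $P_1(t)=t$ (so that $A(\mathcal P;M;f)(x)=\mathcal A_M(x)$) gives $\norm{\mathcal A^*}_2\le C\norm{f}_2$, whence \eqref{eq23} follows with $C(\alpha)=(2\gamma-1)C=(2/\alpha-1)C$; note the case $\alpha=1$ is consistent, as then the left side is just $\mathcal A^*$.

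There is no deep obstacle here: the whole content is the substitution-plus-integration-by-parts reduction, and the one point that deserves care is the a.e.\ justification of both manipulations. By the measurability of the flow together with Fubini, $s\mapsto f(T^s x)$ is locally integrable for $\mu$-a.e.\ $x$, so $F$ is absolutely continuous with $F'(s)=f(T^s x)$ for a.e.\ $s$; this is all that the change of variables and the integration by parts require.
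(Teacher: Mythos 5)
Your proof is correct, and it follows the same overall strategy as the paper: substitute $s=t^{\alpha}$ and reduce the resulting weighted average to the ordinary continuous ergodic maximal function, which Theorem \ref{thm2} (with $k=D=1$, $P_1(t)=t$) controls in $L^2$. The difference is in how the weight $s^{\gamma-1}$ is handled. The paper normalizes it as $\big(\tfrac{s}{M^{\alpha}}\big)^{1/\alpha-1}$ and simply observes that this is $\le 1$ on $[0,M^{\alpha}]$ because the exponent is nonnegative (this is where $\alpha\le 1$ enters), so the whole expression is dominated by $\tfrac{1}{\alpha}$ times the unweighted average of $|f|$ — a one-line domination, yielding the constant $C/\alpha$. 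You instead integrate by parts to write the weighted average as $\gamma\,\mathcal A_N-\tfrac{\gamma(\gamma-1)}{N^{\gamma}}\int_0^N s^{\gamma-1}\mathcal A_s\,ds$ and bound both terms by the maximal function, getting $(2/\alpha-1)C$. Your Abel-summation route is slightly longer but has the mild advantage of working with the signed averages $\mathcal A_s$ directly (rather than passing to $|f|$) and would extend to more general monotone weights; the paper's argument is the more economical one for this specific power weight. Both constants blow up as $\alpha\to 0^+$, consistent with the hypothesis $\alpha\in(0,1]$.
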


\begin{proof}
For any $f\in L^\infty(\mu)$, we have
\begin{align*}
   & \Big|\frac{1}{M}\int_{0}^{M}f(T^{t^{\alpha}}x)dt\Big|  \overset{s=t^\a}= \Big| \frac{1}{M}\int_{0}^{M^\a}f(T^{s}x)\frac{1}{\a}s^{\frac{1}{\a}-1}ds\Big|
    \\= &   \frac{1}{\a}\frac{1}{M^\a}\Big|\int_{0}^{M^\a}f(T^{s}x)\Big(\frac{s}{M^\a}\Big)^{\frac{1}{\a}-1}ds\Big|
    \le \frac{1}{\a}\frac{1}{M^\a}\int_{0}^{M^\a}|f|(T^{s}x)ds,
\end{align*}
where $\Big(\frac{s}{M^\a}\Big)^{\frac{1}{\a}-1}\le 1$ as $\a\in (0,1]$.
Thus the result follows from Theorem \ref{thm2}.
\end{proof}
%In the proofs of main results, the following lemma, whose proof is trivival, will be used frequently.
%
%\begin{lemma}
%	Let $k\in\N$ and $m_{\R^k}$ be the Lebesgue measure on $\R^k$. For any $\delta\in (0,1]$, any $f\in L^{\infty}(m_{\R^k})$ and any $M_1,\ldots,M_k>1$, we have
%	\begin{align}
%		&\Big|\frac{1}{\prod_{j=1}^{k}M_j}\int_{\prod_{j=1}^{k}[0,M_j]}f({\bf t})d{\bf t}-\frac{1}{\delta^k\prod_{j=1}^{k}\lfloor M_j/\delta\rfloor}\int_{\prod_{j=1}^{k}[0,\lfloor M_1/\delta\rfloor\delta]}f({\bf t})d{\bf t}\Big| \notag
%		\\ \le &
%		2\norm{f}_{\infty}\Big(\frac{1}{M_1}+\cdots+\frac{1}{M_k}\Big). \label{req}
%	\end{align}
%\end{lemma}

\subsection{A sketch of the proof of a special case}\label{sample}
In this subsection, we provide a sketch of the proof for a special case of Theorem A, illustrating the key ideas behind the proofs of the main results of the paper.

Now, we present the following special case.

\medskip
\noindent {\bf Special case.}\ {\em
	Let $(X,(T^t)_{t\in\R})$ and $(X,(S^t)_{t\in\R})$ be two continuous flows, and let $\mu$ be a Borel probability measure on $X$ that is invariant under both $(T^t)_{t\in\R}$ and $(S^t)_{t\in\R}$. Then for any $f,g\in L^{\infty}(\mu)$, the limit
	\begin{equation}\label{SEQ}
		\lim\limits_{M\to\infty}\frac{1}{M}\int_{0}^{M}f(T^{t}x)g(S^{t^2}x)dt
	\end{equation}
	exists almost everywhere.
}
\medskip

For our purpose, we will only provide a sketch the proof for this special case. For a complete proof, the reader is referred to the proof of Theorem B.
\begin{proof}
	By Theorem \ref{thm2}, it suffices to prove that the limit in \eqref{SEQ} exists almost everywhere for all $f,g\in C(X)$. Let $f,g\in C(X)$ such that for any $x\in X$, $|f(x)|\le 1$ and $|g(x)|\le 1$. Choose $\ep>0$ arbitrarily. Then there exists $\delta\in (0,1)$ such that for any $x\in X,t\in (-\delta,\delta)$, we have
	\begin{equation}\label{SEQ1}
		|f(T^{t}x)-f(x)|<\ep,\quad |g(S^{t}x)-g(x)|<\ep.
	\end{equation}
	By \eqref{SEQ1}, we have
	\begin{align}
		& \int_{X}\limsup_{K\to\infty}\sup_{M_1\ge K,\atop M_{2}\ge K}\Big|\frac{1}{M_1}\int_{0}^{M_1}f(T^{t}x)g(S^{t^2}x)dt-\frac{1}{M_2}\int_{0}^{M_2}f(T^{t}x)g(S^{t^2}x)dt\Big|d\mu(x) \notag
		\\ \le &
		 \int_{X}\limsup_{K\to\infty}\sup_{M_1\ge K,\atop M_{2}\ge K}\Big|\frac{1}{\lfloor M_1/\delta\rfloor\delta}\int_{0}^{\lfloor M_1/\delta\rfloor\delta}f(T^{t}x)g(S^{t^2}x)dt\notag\\ & \hspace{5cm}-\frac{1}{\lfloor M_2/\delta\rfloor\delta}\int_{0}^{\lfloor M_2/\delta\rfloor\delta}f(T^{t}x)g(S^{t^2}x)dt\Big|d\mu(x) \notag
		 \\ = &
		  \int_{X}\limsup_{K\to\infty}\sup_{M_1\ge K,\atop M_{2}\ge K}\Big|\E_{n\in I_{\lfloor M_1/\delta\rfloor}}\int_{0}^{1}f((T^{\delta})^{(n+t)}x)g((S^{
		  	\delta^2})^{(n+t)^2}x)dt\notag\\ & \hspace{5cm}-
		  \E_{n\in I_{\lfloor M_2/\delta\rfloor}}\int_{0}^{1}f((T^{\delta})^{(n+t)}x)g((S^{
		  	\delta^2})^{(n+t)^2}x)dt\Big|d\mu(x) \notag
		  \\ \overset{\eqref{SEQ1}}\le &
		   4\ep+\int_{X}\limsup_{K\to\infty}\sup_{M_1\ge K,\atop M_{2}\ge K}\Big|\E_{n\in I_{\lfloor M_1/\delta\rfloor}}\int_{0}^{1}f((T^{\delta})^{n}x)g((S^{
		   	\delta^2})^{n^{2}+2nt}x)dt\notag\\ & \hspace{5cm}-
		   \E_{n\in I_{\lfloor M_2/\delta\rfloor}}\int_{0}^{1}f((T^{\delta})^{n}x)g((S^{
		   	\delta^2})^{n^{2}+2nt}x)dt\Big|d\mu(x).\notag
	\end{align}
	
	Let $T_1=T^{\delta},S_1=S^{\delta^2}$. For any $x\in X$, let $\displaystyle F(x)=\int_{0}^{1}g(S_{1}^{t}x)dt$. Then we can write  $\displaystyle\int_{0}^{1}g(S_{1}^{n^{2}+2nt}x)dt$ as $\E_{i\in I_{2n}}F(S_{1}^{n^2+i}x)$. Therefore, it is left to show that
	\begin{equation}\label{SEQ3}
		\lim_{N\to\infty}\E_{n\in I_N}f(T_{1}^{n}x)\big(\E_{i\in I_{2n}}S_{1}^{i}F\big)(S_{1}^{n^2}x)
	\end{equation}
exists almost everywhere.
	
	For any $r,l\in \N$, let $$E_{r}^{l}=\{x\in X:\sup_{n\ge l}\Big|\E_{i\in I_{n}}F(S_{1}^{i}x)-\E_{\mu}(F|\mathcal{I}(S_1))(x)\Big|<\frac{1}{r}\}.$$
	Then by Birkhoff's ergodic theorem, for any $r\in \N$, there exists $l_r\in \N$ such that for any $l\ge l_r$, $\mu(E_{r}^{l})>1-\frac{1}{r}$.
	Fix $r\in \N$ arbitrarily. Note that for any $N\ge 100l_r$ and $\mu$-a.e. $x\in X$, we have
	\begin{align}
		& \Big|\E_{n\in I_N}f(T_{1}^{n}x)\Big(\E_{i\in I_{2n}}S_{1}^{i}F-\E_{\mu}(F|\mathcal{I}(S_1))\Big)(S_{1}^{n^2}x)\Big| \notag
		\\ \le &
		\frac{1}{N}\sum_{n=0}^{l_{r}} \Big| f(T_{1}^{n}x)\Big(\E_{i\in I_{2n}}S_{1}^{i}F-\E_{\mu}(F|\mathcal{I}(S_1))\Big)(S_{1}^{n^2}x)  \Big| \label{SEQ2}
		\\ & \hspace{2cm} +
		\frac{1}{N}\sum_{l_r<n<N,\atop S_{1}^{n^2}x\in E_{r}^{l_r}} \Big| f(T_{1}^{n}x)\Big(\E_{i\in I_{2n}}S_{1}^{i}F-\E_{\mu}(F|\mathcal{I}(S_1))\Big)(S_{1}^{n^2}x)  \Big| \notag
	\\ & \hspace{4cm} +
			\frac{1}{N}\sum_{l_r<n<N,\atop S_{1}^{n^2}x\notin E_{r}^{l_r}} \Big| f(T_{1}^{n}x)\Big(\E_{i\in I_{2n}}S_{1}^{i}F-\E_{\mu}(F|\mathcal{I}(S_1))\Big)(S_{1}^{n^2}x)  \Big| \notag
	\\ \le &	
	\frac{2(l_r+1)}{N}+ \frac{1}{r}+ 2\E_{n\in I_N}1_{(E_{r}^{l_r})^{c}}(S_{1}^{n^2}x). \notag
	\end{align}
	By \eqref{SEQ2} and Theorem \ref{BPET}, we have
	\begin{align*}
		&\int_{X}\limsup_{N\to\infty}\Big|\E_{n\in I_N}f(T_{1}^{n}x)\Big(\E_{i\in I_{2n}}S_{1}^{i}F-\E_{\mu}(F|\mathcal{I}(S_1))\Big)(S_{1}^{n^2}x)\Big|d\mu(x)
		\\ \le &
		\frac{1}{r}+ 2\int_{X}\limsup_{N\to\infty}\E_{n\in I_N}1_{(E_{r}^{l_r})^{c}}(S_{1}^{n^2}x)d\mu(x)
		\le
		\frac{3}{r}.
	\end{align*}
	Since $r$ is arbitrary, for $\mu$-a.e. $x\in X$,
	$$\lim_{N\to\infty}\Big|\E_{n\in I_N}f(T_{1}^{n}x)\big(\E_{i\in I_{2n}}S_{1}^{i}F\big)(S_{1}^{n^2}x)-\E_{n\in I_N}f(T_{1}^{n}x)\E_{\mu}(F|\mathcal{I}(S_1))(x)\Big|=0.$$
	It follows that the limit in \eqref{SEQ3} exists almost everywhere if and only if the limit $$\lim_{N\to\infty}\E_{n\in I_N}f(T_{1}^{n}x)$$ exists almost everywhere.
	
	 Therefore, Birkhoff's ergodic theorem implies that the limit in \eqref{SEQ3} exists almost everywhere. The proof is complete.
\end{proof}
In the above proof, the property of continuous functions can allow us to reduce the continuous-time ergodic averages to discrete-time ergodic averages, whose pointwise convergence can be deduced from known ergodic theorems.

In the following proofs, we will utilize the topological models of measurable flows and various ergodic theorems to apply the method outlined in the previous proof.

\subsection{Proof of Theorem B}

We rewrite (\ref{TA1}) as
$$\lim_{M\to\infty}\frac{1}{M}\int_{0}^{M}f_{1}(T^{(t^\beta)^{\alpha/\beta}}x)
f_{2}(T^{a(t^\beta)^{\alpha/\beta}}x)g(S^{Q(t^\beta)}x)dt.$$
By Lemma \ref{lem1}, we can assume that $\beta=1$. Moreover, we can suppose that $a\neq 0$, $Q(0)=0$ and the leading coefficient of $Q(t)$ is $1$. At this point, we need to prove the following results:
		
Let $\gamma\in (0,1]$. For any $f_1,f_2, g\in L^{\infty}(\mu)$, the limit
\begin{equation}\label{EB}
  \lim\limits_{M\to\infty}\frac{1}{M}\int_{0}^{M}f_{1}(T^{t^{\gamma}}x)
f_{2}(T^{at^{\gamma}}x)g(S^{Q(t)}x)dt
\end{equation}
exists almost everywhere. And if we denote the limit function by $L(f_1,f_2,g)$, then
\begin{equation}\label{LB}	L(f_1,f_2,g)=\E_{\mu}(g|\mathcal{I}((S^t)_{t\in\R}))(x)\lim\limits_{M\to\infty}\frac{1}{M}\int_{0}^{M}f_{1}(T^{t}x)
	f_{2}(T^{at}x)dt
\end{equation}
almost everywhere.
	
The rest of the proof is divided into three steps.
		
\noindent {\textbf{Step I. Reduction to two dense subsets of $L^{2}(\mu)$.}}
		
By Theorem \ref{thm1}, there are continuous flows $(Y,(R^t)_{t\in\R})$ and $(Z,(H^t)_{t\in\R})$, $(R^t)_{t\in\R}$-invariant Borel probability measure $\nu$ on $Y$, and $(H^t)_{t\in\R}$-invariant Borel probability measure $\tau$ on $Z$, such that $(X,\X,\mu, (T^{t})_{t\in \R})$ and $(Y,\B(Y),\nu,(R^t)_{t\in\R})$ are isomorphic, and $(X,\X,\mu, (S^{t})_{t\in \R})$ and $(Z,\mathcal{B}(Z),\tau,(H^t)_{t\in\R})$ are also isomorphic. Thus there is an invertible measure preserving map $\Phi$ between $(T^t)_{t\in\R}$-invariant measurable subset $X_0\subset X$ and $(R^t)_{t\in\R}$-invariant measurable subset $Y_0\subset Y$, both of measure $1$, such that $$\Phi\circ T^{t}(x)=R^{t}\circ \Phi(x),\ \ \text{for}\ \text{each}\  t\in \R, x\in X_0.$$ Similarly, there is an invertible measure preserving map $\Psi$ between $(S^t)_{t\in\R}$-invariant measurable subsets $X_1\subset X$ and  $(H^t)_{t\in\R}$-invariant measurable subsets $Z_1\subset Z$, both of measure $1$, such that $$\Psi\circ S^{t}(x)=H^{t}\circ \Psi(x),\ \text{for}\ \text{each}\  t\in \R, x\in X_1.$$ The isomorphisms $\Phi$ and $\Psi$ induce isomorphisms of Hilbert spaces:
$$\Phi_*: L^2(Y,\nu)\rightarrow L^2(X,\mu), h\mapsto h\circ \Phi,\quad \Psi_*: L^2(Z,\tau)\rightarrow L^2(X,\mu), h\mapsto h\circ \Psi.$$
Let $$\mathcal{A}((T^t)_{t\in \R})=\Phi_*\big(C(Y)\big)=\{h_1\circ\Phi:h_1\in C(Y)\},$$ and let
$$\mathcal{A}((S^t)_{t\in \R})=\Psi_*\big(C(Z)\big)=\{h_2\circ\Psi:h_2\in C(Z)\}.$$

As $C(Y)$ is dense in $L^2(Y,\nu)$ and $C(Z)$ is dense in $L^2(Z,\tau)$, $\mathcal{A}((T^t)_{t\in \R})$ and $\mathcal{A}((S^t)_{t\in \R})$ are two dense subsets of $L^{2}(X, \mu)$. From the definitions of $\mathcal{A}((T^t)_{t\in \R})$ and $\mathcal{A}((S^t)_{t\in \R})$, we observe the following:

\noindent  (\textbf{S1}) For any $\tilde{f}\in\mathcal{A}((T^t)_{t\in \R})$, and any $\tilde{g}\in\mathcal{A}((S^t)_{t\in \R})$, $$|\tilde{f}(x_0)|\le \norm{\tilde{f}}_{\infty}\ \text{for}\ \text{each}\  x_0\in X_0; \ \text{and}\ |\tilde{g}(x_1)|\le \norm{\tilde{g}}_{\infty}\ \text{for}\ \text{each}\  x_1\in X_1.$$

Moreover, the following statement holds:
		
\noindent (\textbf{S2}) For any $\epsilon>0$, any $\tilde{f}\in\mathcal{A}((T^t)_{t\in \R})$, and any $\tilde{g}\in\mathcal{A}((S^t)_{t\in \R})$, there is $\eta>0$ such that for any $x_0\in X_0$,  $x_1\in X_1$, and any $t\in (-\eta,\eta)$,  $$|\tilde{f}(T^{t}x_0)-\tilde{f}(x_0)|<\epsilon,\ \text{and}\  |\tilde{g}(S^{t}x_1)-\tilde{g}(x_1)|<\epsilon.$$

To see this, let $\tilde{f}=f\circ \Phi$ and $\tilde{g}=g\circ \Psi$ for some $f\in C(Y)$ and $g\in C(Z)$. As $(Y,(R^t)_{t\in\R})$ and $(Z,(H^t)_{t\in\R})$ are continuous flows, there is $\eta>0$ such that for all $|t|<\eta,y\in Y$ and $z\in Z$,
$$|f(R^ty)-f(y)|<\epsilon, \quad|g(H^tz)-g(z)|<\ep.$$
Then for each $x_0\in X_0$ and each $t\in (-\eta, \eta)$, we have $\Phi (x_0)\in Y$ and
$$|\tilde{f}(T^tx_0)-\tilde{f}(x_0)|=|f(\Phi(T^tx_0))-f(\Phi(x_0))|=|f(R^t(\Phi (x_0)))-f(\Phi (x_0))|<\ep.$$
Similarly, for each $x_1\in X_1$ and each $t\in (-\eta, \eta)$, we have $|\tilde{g}(S^{t}x_1)-\tilde{g}(x_1)|<\epsilon.$ Thus we have established (\textbf{S2}).

By Theorem \ref{thm2} and Lemma \ref{lem4}, to verify that the limit in \eqref{EB} exists almost everywhere, it suffices to prove that the limit in \eqref{EB} exists almost everywhere for all $f_1,f_2\in \mathcal{A}((T^t)_{t\in \R})$ and $g\in \mathcal{A}((S^t)_{t\in \R})$.

\medskip	
\noindent 	\textbf{Step II. Verify that the limit in \eqref{EB} exists almost everywhere.}
			
Let $f_1,f_2\in\mathcal{A}((T^t)_{t\in \R})$ and $f_3\in\mathcal{A}((S^t)_{t\in \R})$ such that $\norm{f_1}_{\infty}\le 1$, $\norm{f_2}_{\infty}\le 1$, and $\norm{f_3}_{\infty}\le 1$. Next, we verify that the limit
\begin{equation}\label{EQ}
	  \lim\limits_{M\to\infty}\frac{1}{M}\int_{0}^{M}f_{1}(T^{t^{\gamma}}x)
	  f_{2}(T^{at^{\gamma}}x)f_{3}(S^{Q(t)}x)dt
\end{equation} exists for $\mu$-a.e. $x\in X$.

By (\textbf{S2}), for any $\theta>0$, there exists $\eta(\theta)\in (0,1)$ such that for any $x_0\in X_0$, any $x_1\in X_1$ and any $t\in (-\eta(\theta),\eta(\theta))$,
			\begin{equation}\label{eq6}
				|f_1(T^{t}x_0)-f_1(x_0)|<\theta,|f_2(T^{t}x_0)-f_2(x_0)|<\theta,\ \text{and}\ |f_3(S^{t}x_1)-f_3(x_1)|<\theta.
			\end{equation}

Choose $\ep>0$ arbitrarily. Let $\delta>0$ such that $0<\delta<\eta(\ep)/4, |a\delta|<\eta(\ep)$ and  $Q([0,\delta])\subset (-\eta(\ep),\eta(\ep))$.

Since $Q\in \R[t]$ with $\deg Q=s\ge 2,Q(0)=0$ and the leading coefficient of  $Q(t)$ is $1$, there are $P(n), P_1(n),\ldots, P_{s-1}(n)\in \R[n]$ such that following hold:
\begin{itemize}
	\item $P(0)=0$.
	\item For each $i\in \{1,\ldots,s-1\}$, $\deg P_{i}(n)=s-i$.
	\item The leading coefficient of  $P_{1}(n)$ is $s$.
	\item For any $n\in\Z,t\in\R$,
	\begin{equation}\label{eq51}
	Q(n\delta+t)=P(n)\delta^{s}+Q(t)+\sum_{i=1}^{s-1}P_{i}(n)\delta^{s-i}t^{i}.
	\end{equation}
\end{itemize}
% we have
%
%where $P(n), P_1(n),\ldots, P_{s-1}(n)\in \R[n]$, $P(0)=0$, $\deg P_{i}(n)=s-i$ for each $i\in \{1,\ldots,s-1\}$, and the leading coefficient of  $P_{1}(n)$ is $s$.

Let $T_{1}=T^{\delta^{\gamma}},S_{1}=S^{\delta^s}$. We aim to prove that if the limit
$$\lim\limits_{N\to\infty}\E_{n\in I_N}f_{1}(T_{1}^{n^{\gamma}}x)f_{2}(T_{1}^{an^{\gamma}}x)$$ exists almost everywhere, then the limit in \eqref{EQ} also exists almost everywhere.

Now, we need the following ergodic averages for an approximation argument:
\begin{equation}\label{eq0}
		\E_{n\in I_N}f_{1}(T_{1}^{n^{\gamma}}x)f_{2}(T_{1}^{an^{\gamma}}x)\int_{0}^{1}f_{3}(S_{1}^{P(n)+\sum_{i=1}^{s-1}P_{i}(n)t^{i}}x)dt.
\end{equation}

For any $x\in X_0\cap X_1$ and any $N\in \N$, we have
\begin{align}
		& \frac{1}{N\delta}\int_{0}^{N\delta}f_{1}(T^{t^{\gamma}}x)f_{2}(T^{at^{\gamma}}x)f_{3}(S^{Q(t)}x)dt \notag
%		\\ = &
%		\E_{n\in I_N}\frac{1}{\delta}\int_{0}^{\delta}f_{1}(T^{(n\delta+t)^{\gamma}}x)f_{2}(T^{a(n\delta+t)^{\gamma}}x)f_{3}(S^{Q(n\delta+t)}x)dt \label{eq50}
		\\\overset{\eqref{eq51}}= &
		\E_{n\in I_N}\int_{0}^{1}f_{1}((T^{\delta^{\gamma}})^{(n+t)^{\gamma}}x)f_{2}((T^{\delta^{\gamma}})^{a(n+t)^{\gamma}}x)f_{3}(S^{P(n)\delta^{s}+Q(\delta  t)+\sum_{i=1}^{s-1}P_{i}(n)\delta^{s}t^{i}}x)dt.\label{eq50}
\end{align}

By Mean Value Theorem, it is easy to verify that $(n+t)^\gamma-n^\gamma\le{\gamma}/{n^{1-\gamma}}$ for all $t\in [0,1],n\in\N$.
Thus there exists $N_0\in\N$ such that for any $n\ge N_0$ and any $t\in [0,1]$, $(n+t)^{\gamma}-n^{\gamma}\le\delta^{1-\gamma}$.

By \eqref{eq50}, the defintions of $T_1,S_1$, and the setting of $\delta$, we have that for any $x\in X_0\cap X_1$ and any $N\ge 100N_0$,
\begin{align}
& \Big|\frac{1}{N\delta}\int_{0}^{N\delta}f_{1}(T^{t^{\gamma}}x)f_{2}(T^{at^{\gamma}}x)f_{3}(S^{Q(t)}x)dt \notag
			\\ & \hspace{2cm} - \E_{n\in I_N}f_{1}(T_{1}^{n^{\gamma}}x)f_{2}(T_{1}^{an^{\gamma}}x)\int_{0}^{1}f_{3}(S_{1}^{P(n)+\sum_{i=1}^{s-1}P_{i}(n)t^{i}}x)dt\Big| \notag
%			\\ \overset{\eqref{eq50}}= &
%			\Big|\E_{n\in I_N}\int_{0}^{1}f_{1}((T^{\delta^{\gamma}})^{(n+t)^{\gamma}}x)f_{2}((T^{a\delta^{\gamma}})^{(n+t)^{\gamma}}x)f_{3}(S^{P(n)\delta^{s}+Q(\delta  t)+\sum_{i=1}^{s-1}P_{i}(n)\delta^{s}t^{i}}x)dt \notag
%			\\ & \hspace{2cm} -
%			\E_{n\in I_N}f_{1}((T^{\delta^{\gamma}})^{n^{\gamma}}x)f_{2}((T^{a\delta^{\gamma}})^{n^{\gamma}}x)\int_{0}^{1}f_{3}(S^{P(n)\delta^{s}+\sum_{i=1}^{s-1}P_{i}(n)\delta^{s}t^{i}}x)dt
%			\Big| \notag
%			 \\ \overset{(\textbf{S1})}\le &   \norm{f_1}_{\infty}\norm{f_3}_{\infty}\E_{n\in I_N\backslash I_{N_1}}\int_{0}^{1}|f_{2}((T^{a\delta^{\gamma}})^{(n+t)^{\gamma}}x)-f_{2}((T^{a\delta^{\gamma}})^{n^{\gamma}}x)|dt \notag
%	\\ & \hspace{0.5cm}		  + \norm{f_2}_{\infty}\norm{f_3}_{\infty}\E_{n\in I_N\backslash I_{N_1}}\int_{0}^{1}|f_{1}((T^{\delta^{\gamma}})^{(n+t)^{\gamma}}x)-f_{1}((T^{\delta^{\gamma}})^{n^{\gamma}}x)|dt \notag\\ & \hspace{1.0cm} +
%	\norm{f_1}_{\infty}\norm{f_2}_{\infty}\E_{n\in I_N\backslash I_{N_1}}\int_{0}^{1}|f_{3}(S^{P(n)\delta^{s}+Q(\delta  t)+\sum_{i=1}^{s-1}P_{i}(n)\delta^{s}t^{i}}x) \label{eq11}\\ & \hspace{1.5cm} - f_{3}(S^{P(n)\delta^{s}+\sum_{i=1}^{s-1}P_{i}(n)\delta^{s}t^{i}}x)|dt
%+ \frac{2N_1}{N}\norm{f_1}_{\infty}\norm{f_2}_{\infty}\norm{f_3}_{\infty}
%			\notag \
\\\overset{\eqref{eq50},(\textbf{S1}),(\ref{eq6})}\le &
			 3\epsilon+\frac{2N_0}{N}. \label{eq11}
\end{align}

			Let $G_{n}(t)=\sum_{i=1}^{s-1}P_{i}(n)t^{i}$ for any $n\in \Z$. Since the leading coefficient of  $P_{1}(n)$ is $s$ and $\deg P_{i}(n)=s-i$ for each $i\in \{1,\ldots,s-1\}$, there exists $N_1\in \N$ with $N_1>100N_0$ such that for any $n\ge N_1$ and any $t\in [-1,2]$, we have that $G_{n}'(t)>0,\lfloor G_{n}(1)\rfloor >n$ and $P_{1}(n)>\sum_{i=2}^{s-1}|P_{i}(n)|+2$. (When $s=2$, we view $\sum_{i=2}^{s-1}|P_{i}(n)|$ as $0$.) So, for any $n\ge N_1$ and any $t\in [0,1]$, $G_{n}(t)$ has an inverse function, denoted by $G_{n}^{-1}$. Note that for any $x\in X_0\cap X_1$,
\begin{align}
& \limsup_{N\to\infty}\Big|	\E_{n\in I_N}f_{1}(T_{1}^{n^{\gamma}}x)f_{2}(T_{1}^{an^{\gamma}}x)\int_{0}^{1}f_{3}(S_{1}^{P(n)+G_{n}(t)}x)dt \notag
\\ & \hspace{1.5cm} -	\frac{N-N_1}{N}\E_{n\in I_N\backslash I_{N_1}}f_{1}(T_{1}^{n^{\gamma}}x)f_{2}(T_{1}^{an^{\gamma}}x)\frac{1}{\lfloor G_{n}(1)\rfloor}\int_{0}^{\lfloor G_{n}(1)\rfloor} f_{3}(S_{1}^{P(n)+t}x)dt\Big|  \notag
%				\\ \overset{\eqref{req}, (\textbf{S1})}\le &
%				\norm{f_1}_{\infty}\norm{f_2}_{\infty}\limsup_{N\to\infty}\E_{n\in I_N\backslash I_{N_0}}\Big|\int_{0}^{1}f_{3}(S_{1}^{P(n)+G_{n}(t)}x)dt-\frac{\int_{0}^{G_{n}(1)}f_{3}(S_{1}^{P(n)+t}x)dt}{G_{n}(1)} \Big|  \notag
				\\ \overset{(\textbf{S1})}\le  &
				\limsup_{N\to\infty}\E_{n\in I_N\backslash I_{N_1}}\Big|\int_{0}^{G_{n}(1)}\Big( \frac{1}{G_{n}'(G_{n}^{-1}(t))}-\frac{1}{G_{n}(1)}\Big)f_{3}(S_{1}^{P(n)+t}x)dt \Big|  \notag
				\\ \overset{(\textbf{S1})}\le &
				\limsup_{N\to\infty}\E_{n\in I_N\backslash I_{N_1}}\frac{1+s\sum_{i=2}^{s-1}|P_{i}(n)|}{P_{1}(n)-(s-1)\sum_{i=2}^{s-1}|P_{i}(n)|}  \label{eq24}
%			\\ \le &
%			\norm{f_1}_{\infty}\norm{f_2}_{\infty}\norm{f_3}_{\infty}\limsup_{N\to\infty}\E_{n\in I_N\backslash I_{N_0}}\frac{C}{n}  \notag
%			\\ \le &
%			C\norm{f_1}_{\infty}\norm{f_2}_{\infty}\norm{f_3}_{\infty}\limsup_{N\to\infty}\frac{\log N}{N-N_0}  \notag
			=
			0.
\end{align}
% where $C$ ia an absolute constant, depending on $s,P(n),P_{1}(n),\ldots,P_{s-1}(n)$.

For any $x\in X_0\cap X_1$, let $\displaystyle F(x)=\int_{0}^{1}f_{3}(S_{1}^{t}x)dt$. By the definition of $S_1$ and the choice of $\delta$, we have that for any $x\in X_0\cap X_1$ and any $N\ge 100N_1$,
\begin{equation}\label{eq41}
 \begin{split}
 & \Big|\E_{n\in I_N\backslash I_{N_1}}f_{1}(T_{1}^{n^{\gamma}}x)f_{2}(T_{1}^{an^{\gamma}}x)\frac{1}{\lfloor G_{n}(1)\rfloor}\int_{0}^{\lfloor G_{n}(1)\rfloor}f_{3}(S_{1}^{P(n)+t}x)dt
 \\ & \hspace{1cm} -
 \E_{n\in I_N\backslash I_{N_1}}f_{1}(T_{1}^{n^{\gamma}}x)f_{2}(T_{1}^{an^{\gamma}}x)\Big(\E_{i\in I_{\lfloor G_{n}(1)\rfloor}}S_{1}^{i} F\Big)(S_{1}^{\lfloor P(n)\rfloor}x)\Big|\le \ep
 \end{split}
\end{equation}

For any $r, l\in\N$, let
			$$E_{r}^{l}=\Big\{x\in X:\sup_{n\ge l}\Big|\E_{i\in I_n}F(S_{1}^{i}x)-\E_{\mu}(F|\mathcal{I}(S_1))(x)\Big|<\frac{1}{r}\Big\}.$$
Fix $r\in \N$ arbitrarily. By Birkhoff's ergodic theorem, there is $l_r\in\N$ with $l_r\ge 100N_1$ such that for all $l\ge l_r$, $\mu(E_{r}^{l})>1-\frac{1}{r}$. Note that for $\mu$-a.e. $x\in X_0\cap X_1$ and any $N \ge 100l_r$,
\begin{align}
&\Big|\E_{n\in I_N\backslash I_{N_1}}f_{1}(T_{1}^{n^{\gamma}}x)
f_{2}(T_{1}^{an^{\gamma}}x)\Big(\big(\E_{i\in I_{\lfloor G_{n}(1)\rfloor}}S_{1}^{i}F\big)-
\E_{\mu}(F|\mathcal{I}(S_1))\Big)(S_{1}^{\lfloor P(n)\rfloor}x)\Big| \notag
\\ \le &
				\frac{2}{N}\sum_{n=N_1}^{l_r}\Big| f_{1}(T_{1}^{n^{\gamma}}x)f_{2}(T_{1}^{an^{\gamma}}x)\Big(\big(\E_{i\in I_{\lfloor G_{n}(1)\rfloor}}S_{1}^{i}F\big)-
				\E_{\mu}(F|\mathcal{I}(S_1))\Big)(S_{1}^{\lfloor P(n)\rfloor}x)\Big|  \label{eq52}\\ & +\frac{2}{N}\sum_{l_r< n\le N-1, \atop S_{1}^{\lfloor P(n)\rfloor}x\in E_{r}^{l_r}}\Big| f_{1}(T_{1}^{n^{\gamma}}x)f_{2}(T_{1}^{an^{\gamma}}x)\Big(\big(\E_{i\in I_{\lfloor G_{n}(1)\rfloor}}S_{1}^{i}F\big)-
				\E_{\mu}(F|\mathcal{I}(S_1))\Big)(S_{1}^{\lfloor P(n)\rfloor}x)\Big| \notag\\ & + \frac{2}{N}\sum_{l_r< n\le N-1, \atop S_{1}^{\lfloor P(n)\rfloor}x\notin E_{r}^{l_r}}\Big| f_{1}(T_{1}^{n^{\gamma}}x)f_{2}(T_{1}^{an^{\gamma}}x)\Big(\big(\E_{i\in I_{\lfloor G_{n}(1)\rfloor}}S_{1}^{i}F\big)-
				\E_{\mu}(F|\mathcal{I}(S_1))\Big)(S_{1}^{\lfloor P(n)\rfloor}x)\Big| \notag
				\\ \overset{(\textbf{S1})}\le &
				\frac{4l_r}{N}+\frac{2}{r}+
				4\E_{n\in I_N}1_{(E_{r}^{l_r})^{c}}(S_{1}^{\lfloor P(n)\rfloor}x). \notag
	\end{align}
%Since $P(n)\in \R[n]$, we can write $P(n)$ as $A_{\deg P}n^{\deg P}+\cdots+A_{1}n$, where $A_{\deg P},\ldots,A_1\in\R$. For each $1\le i\le \deg P$, let $\tilde{P}_{i}=n^i,\tilde{S}_{i}=S_{1}^{A_i}$. Then
%for any $n\in \Z$, we have
%\begin{equation}\label{eq36}
%	S_{1}^{P(n)}=\prod_{i=1}^{\deg P}\tilde{S}_{i}^{\tilde{P}_{i}(n)}.
%\end{equation}
By \eqref{eq52} and Theorem \ref{BPET}, we have
\begin{align*}
& \int \limsup_{N\to\infty} \Big|\E_{n\in I_N\backslash I_{N_1}}f_{1}(T_{1}^{n^{\gamma}}x)f_{2}(T_{1}^{an^{\gamma}}x) \\ & \hspace{3cm} \cdot \Big(\big(\E_{i\in I_{\lfloor G_{n}(1)\rfloor}}S_{1}^{i}F\big)-
\E_{\mu}(F|\mathcal{I}(S_1))\Big)(S_{1}^{\lfloor P(n)\rfloor}x)\Big|d\mu(x)
		    	\\ \le &
		    	 \frac{2}{r}+ 4\int \limsup_{N\to\infty}\E_{n\in I_N}1_{(E_{r}^{l_r})^{c}}(S_{1}^{\lfloor P(n)\rfloor}x)d\mu(x)
%		    	\\ \overset{(\ref{eq9})}= &
%		    	\frac{2}{r}\norm{f_1}_{\infty}\norm{f_2}_{\infty} + 4\norm{f_1}_{\infty}\norm{f_2}_{\infty}\norm{f_3}_{\infty}\lim_{N\to\infty}\E_{n\in I_N}\int 1_{(E_{r}^{l_0})^{c}}(S_{1}^{P(n)}x)d\mu(x)
		    	\le
		    	\frac{6}{r}.
		    \end{align*}
Since $r$ is arbitrary, we have
\begin{equation}\label{eq20}
	\lim_{N\to\infty}\Big|\E_{n\in I_N\backslash I_{N_0}}f_{1}(T_{1}^{n^{\gamma}}x)
	f_{2}(T_{1}^{an^{\gamma}}x)\Big(\big(\E_{i\in I_{\lfloor G_{n}(1)\rfloor}}S_{1}^{i}F\big)-
	\E_{\mu}(F|\mathcal{I}(S_1))\Big)(S_{1}^{\lfloor P(n)\rfloor}x)\Big|=0
\end{equation}
for $\mu$-a.e. $x\in X_0\cap X_1$.

For ease of reference, we refer to the process outlined above, which constructs (\ref{eq20}), as the \textbf{Reduction Process}.

By \eqref{eq11}, \eqref{eq24}, \eqref{eq41}, and \eqref{eq20}, we have
\begin{align*}
&\int_X \limsup_{K\to\infty}\sup_{M_1\ge K,\atop M_2\ge K}\Big|\frac{1}{M_1}\int_{0}^{M_1}f_{1}(T^{t^{\gamma}}x)f_{2}(T^{at^{\gamma}}x)f_{3}(S^{Q(t)}x)dt
\\ & \hspace{3cm}- \frac{1}{M_2}\int_{0}^{M_2}f_{1}(T^{t^{\gamma}}x)f_{2}(T^{at^{\gamma}}x)f_{3}(S^{Q(t)}x)dt\Big|d\mu(x)
\\\le  & \int_X \limsup_{K\to\infty}\sup_{M_1\ge K,\atop M_2\ge K}\Big|\frac{1}{\lfloor M_1/\delta \rfloor\delta}\int_{0}^{\lfloor M_1/\delta \rfloor\delta}f_{1}(T^{t^{\gamma}}x)f_{2}(T^{at^{\gamma}}x)f_{3}(S^{Q(t)}x)dt
\\ & \hspace{3cm}- \frac{1}{\lfloor M_2/\delta \rfloor\delta}\int_{0}^{\lfloor M_2/\delta \rfloor\delta}f_{1}(T^{t^{\gamma}}x)f_{2}(T^{at^{\gamma}}x)f_{3}(S^{Q(t)}x)dt\Big|d\mu(x)
%\\ \overset{(\ref{eq11})}\le & \int_X \limsup_{K\to\infty}\sup_{M_1\ge K,\atop M_2\ge K}\Big|	\E_{n\in I_{\lfloor M_1/\delta \rfloor }}f_{1}(T_{1}^{n^{\gamma}}x)f_{2}(T_{1}^{an^{\gamma}}x)\int_{0}^{1}f_{3}(S_{1}^{P(n)+\sum_{i=1}^{s-1}P_{i}(n)t^{i}}x)dt
%\\ & \hspace{1cm}- \E_{n\in I_{\lfloor M_2/\delta \rfloor }}f_{1}(T_{1}^{n^{\gamma}}x)f_{2}(T_{1}^{an^{\gamma}}x)\int_{0}^{1}f_{3}(S_{1}^{P(n)+\sum_{i=1}^{s-1}P_{i}(n)t^{i}}x)dt\Big|d\mu(x) + 6\epsilon
\\ \overset{{(\ref{eq11}),(\ref{eq24}),}\atop \eqref{eq41},\eqref{eq20}}\le &
\int_X \limsup_{K\to\infty}\sup_{M_1\ge K,\atop M_2\ge K}\Big|	\E_{n\in I_{\lfloor M_1/\delta \rfloor }}f_{1}(T_{1}^{n^{\gamma}}x)f_{2}(T_{1}^{an^{\gamma}}x)\E_{\mu}(F|\mathcal{I}(S_1))(x)
\\ & \hspace{1cm}- \E_{n\in I_{\lfloor M_2/\delta \rfloor }}f_{1}(T_{1}^{n^{\gamma}}x)f_{2}(T_{1}^{an^{\gamma}}x)\E_{\mu}(F|\mathcal{I}(S_1))(x)\Big|d\mu(x) + 6\epsilon.
%\\ \overset{\eqref{eq37}}\le &
%\int_X \limsup_{K\to\infty}\sup_{M_1\ge K,\atop M_2\ge K}\Big|	\E_{n\in I_{\lfloor M_1/\delta \rfloor }}f_{1}(T_{1}^{qn^{\gamma}}x)f_{2}(T_{1}^{pn^{\gamma}}x)\big(\E_{i\in I_{k_0}}S_{1}^{i}F\big)(S_{1}^{P(n)}x)
%\\ & \hspace{1cm}- \E_{n\in I_{\lfloor M_2/\delta \rfloor }}f_{1}(T_{1}^{qn^{\gamma}}x)f_{2}(T_{1}^{pn^{\gamma}}x)\big(\E_{i\in I_{k_0}}S_{1}^{i}F\big)(S_{1}^{P(n)}x)\Big|d\mu(x)	\\ & \hspace{1cm} + 4\epsilon(\norm{f_1}_{\infty}\norm{f_2}_{\infty}+\norm{f_2}_{\infty}\norm{f_3}_{\infty}+\norm{f_1}_{\infty}\norm{f_3}_{\infty})
%\\ \overset{\eqref{eq39},\eqref{eq40}}\le &
%\int_X \limsup_{K\to\infty}\sup_{M_1\ge K,\atop M_2\ge K}\Big|	\E_{n\in I_{\lfloor M_1/\delta \rfloor }}f_{1}(T_{1}^{qn^{\gamma}}x)f_{2}(T_{1}^{pn^{\gamma}}x)\E_{\mu}(F|\mathcal{I}(S_1))(S_{1}^{\lfloor P(n)\rfloor}x)
%\\ & \hspace{1cm}- \E_{n\in I_{\lfloor M_2/\delta \rfloor }}f_{1}(T_{1}^{qn^{\gamma}}x)f_{2}(T_{1}^{pn^{\gamma}}x)\E_{\mu}(F|\mathcal{I}(S_1))(S_{1}^{\lfloor P(n)\rfloor}x)\Big|d\mu(x)	\\ & \hspace{1cm} + 8\epsilon(\norm{f_1}_{\infty}\norm{f_2}_{\infty}+\norm{f_2}_{\infty}\norm{f_3}_{\infty}+\norm{f_1}_{\infty}\norm{f_3}_{\infty}).
\end{align*}

For ease of reference, we refer to the calculation outlined above as the \textbf{Reduction Calculation}.

By the above calculations, to verify that the limit in (\ref{EQ}) exists for $\mu$-a.e. $x\in X$, it suffices to show that
\begin{equation}\label{eq25} \lim\limits_{N\to\infty}\E_{n\in I_N}f_{1}(T_{1}^{n^{\gamma}}x)f_{2}(T_{1}^{an^{\gamma}}x)
\end{equation}
exists for $\mu$-a.e. $x\in X$.

 First, we consider the case where $\gamma=1$. By (\textbf{S1}) and \eqref{eq6},  for any $k\in \N$, there exists $L_k\in \N$ such that for any $x\in X_0$,
 $$\left|\E_{n\in I_N}f_{1}(T_{1}^{n}x)f_{2}(T_{1}^{an}x)-\E_{n\in I_N}f_{1}((T^{\delta/L_k})^{L_{k}n}x)f_{2}((T^{\delta/L_k})^{\lfloor (aL_{k})n\rfloor}x)\right|\le \frac{1}{k}.$$
 By combining Theorem \ref{thm-K} and the above fact, we have that
 the limit in \eqref{eq25} exists for $\mu$-a.e. $x\in X$.

Next, we consider the case $\gamma\in (0,1)$. Choose $\tau>0$ arbitrarily.
By Theorem \ref{thm6} and Lemma \ref{lem1},  the limit
\begin{equation}\label{eq59}
  \lim\limits_{M\to\infty}\frac{1}{M}\int_{0}^{M}f_{1}(T_{1}^{t^{\gamma}}x)
f_{2}(T_{1}^{at^{\gamma}}x)dt
\end{equation}
exists almost everywhere.

As mentioned before, by Mean Value Theorem, we have that $(n+t)^\gamma-n^\gamma\le {\gamma}/{n^{1-\gamma}}$ for any $t\in [0,1],n\in\N$. Thus there exists $N_2\in \N$ such that for any $n\ge N_2$ and any $t\in [0,1]$, $(n+t)^{\gamma}-n^{\gamma}<\min\{1,|1/a|\}\eta(\tau)/\delta^{\gamma}$.

 Then for any $x\in X_0\cap X_1$,
\begin{align}
& \limsup_{M\to\infty} \Big| \frac{1}{M}\int_{0}^{M}f_{1}(T_{1}^{t^{\gamma}}x) f_{2}(T_{1}^{at^{\gamma}}x)dt- \E_{n\in I_{\lfloor M\rfloor}}f_{1}(T_{1}^{n^{\gamma}}x) f_{2}(T_{1}^{an^{\gamma}}x) \Big| \notag
\\\le & \limsup_{M\to\infty}\Big|\E_{n\in I_{\lfloor M\rfloor}}\int_{0}^{1}f_{1}(T_{1}^{(n+t)^{\gamma}}x)f_{2}(T_{1}^{a(n+t)^{\gamma}}x)dt-\E_{n\in I_{\lfloor M\rfloor}}f_{1}(T_{1}^{n^{\gamma}}x)f_{2}(T_{1}^{an^{\gamma}}x)\Big|\notag
%		    	\\ \overset{(\textbf{S1}),}\le & \norm{f_1}_{\infty}\limsup_{M\to\infty}\E_{n\in I_{\lfloor M\rfloor}\backslash I_{N_2}}\int_{0}^{1} |f_{2}(T_{1}^{p(n+t)^{\gamma}}x)dt-f_{2}(T_{1}^{pn^{\gamma}}x)|dt \label{eq72}
%		    	\\ & \hspace{1cm} + \norm{f_2}_{\infty}\limsup_{M\to\infty} \E_{n\in I_{\lfloor M\rfloor}\backslash I_{N_2}}\int_{0}^{1} |f_{1}(T_{1}^{q(n+t)^{\gamma}}x)dt-f_{1}(T_{1}^{qn^{\gamma}}x)|dt \notag\\ & \hspace{2cm} + \limsup_{M\to\infty} \frac{2N_2}{\lfloor M\rfloor} \norm{f_1}_{\infty}\norm{f_2}_{\infty}\notag
		    	\\ \overset{(\textbf{S1}), (\ref{eq6})} \le &
		    	2\tau.   	\label{eq72}
		    \end{align}
		    Since $\tau$ is arbitrary and the limit in \eqref{eq59} exists almost everywhere, the limit in (\ref{eq25}) exists for $\mu$-a.e. $x\in X_0\cap X_1$. Therefore, the limit in (\ref{EQ}) exists for $\mu$-a.e. $x\in X$.
		
\noindent \textbf{Step III. Verify that the equality in \eqref{LB} holds almost everywhere.}
%\noindent \textbf{Part I. Reduce \eqref{LB} to $\mathcal{A}((T^t)_{t\in \R})$ and $\mathcal{A}((S^t)_{t\in \R})$. }

By Theorem \ref{thm2} and Lemma \ref{lem4}, it suffices to prove that the equality in \eqref{LB} holds almost everywhere for all $f_1,f_2\in \mathcal{A}((T^t)_{t\in \R})$ and $g\in \mathcal{A}((S^t)_{t\in \R})$.

Let $f_1,f_2\in\mathcal{A}((T^t)_{t\in \R})$ and $f_3\in\mathcal{A}((S^t)_{t\in \R})$ $\norm{f_1}_{\infty}\le 1$, $\norm{f_2}_{\infty}\le 1$, and $\norm{f_3}_{\infty}\le 1$. Next, we verify that
\begin{equation}\label{eq67}
L(f_1,f_2,f_3)=\E_{\mu}(f_3|\mathcal{I}((S^t)_{t\in\R}))(x)\lim\limits_{M\to\infty}\frac{1}{M}\int_{0}^{M}f_{1}(T^{t}x)f_{2}(T^{at}x)dt
\end{equation}
almost everywhere.

By (\textbf{S2}), for any $\epsilon>0$, there exist $\delta,\eta>0$ with $0<\delta<\eta/4, |a\delta|<\eta$ and $Q([0,\delta])\subset (-\eta,\eta)$ such that for any $x_0\in X_0$, any $x_1\in X_1$ and any $t\in (-\eta,\eta)$, we have
$$
	|f_1(T^{t}x_0)-f_1(x_0)|<\ep,|f_2(T^{t}x_0)-f_2(x_0)|<\ep,|f_3(S^{t}x_1)-f_3(x_1)|<\ep.
$$
Let $\hat{T}=T^{\delta^{\gamma}/q},\hat{S}=S^{\delta^{s}}$. For any $x\in X_0\cap X_1$, let $\hat{F}(x)=\displaystyle \int_{0}^{1}f_{3}(\hat{S}^{t}x)dt$.  By repeating the related arguments in \textbf{Step II}, we can establish the corresponding equations \eqref{eq11}, \eqref{eq24}, \eqref{eq41}, and \eqref{eq20} for $f_1,f_2,f_3,\ep,\delta,\hat{T}, \hat{S}$ and $\hat{F}$.
Then we have
\begin{align*}
	&\int_{X}\limsup_{M\to\infty}\Big|\frac{1}{M}\int_{0}^{M}f_{1}(T^{t^{\gamma}}x)f_{2}(T^{at^{\gamma}}x)f_{3}(S^{Q(t)}x)dt \\ & \hspace{3cm} -
	\E_{\mu}(\hat{F}|\mathcal{I}(\hat{S}))(x)\E_{n\in I_{\lfloor M/\delta \rfloor }}f_{1}(\hat{T}^{n^{\gamma}}x)f_{2}(\hat{T}^{an^{\gamma}}x)
	\Big|d\mu(x)
	\\\le &
	\int_{X}\limsup_{M\to\infty}\Big|\frac{1}{\lfloor M/\delta\rfloor\delta}\int_{0}^{\lfloor M/\delta\rfloor\delta}f_{1}(T^{t^{\gamma}}x)f_{2}(T^{at^{\gamma}}x)f_{3}(S^{Q(t)}x)dt \\ & \hspace{3cm} -
\E_{\mu}(\hat{F}|\mathcal{I}(\hat{S}))(x)\E_{n\in I_{\lfloor M/\delta \rfloor }}f_{1}(\hat{T}^{n^{\gamma}}x)f_{2}(\hat{T}^{an^{\gamma}}x)
	\Big|d\mu(x)
%	\\ \overset{\eqref{eq11}}\le &
%		\int_{X}\limsup_{M\to\infty}\Big|\E_{n\in I_{\lfloor M/\delta \rfloor}}f_{1}(\hat{T}^{qn^{\gamma}}x)f_{2}(\hat{T}^{pn^{\gamma}}x)\int_{0}^{1}f_{3}(\hat{S}^{P(n)+\sum_{i=1}^{s-1}P_{i}(n)t^{i}}x)dt \\ & \hspace{3cm} -
%\E_{\mu}(\hat{F}|\mathcal{I}(\hat{S}))(x)\E_{n\in I_{\lfloor M/\delta \rfloor }}f_{1}(\hat{T}^{qn^{\gamma}}x)f_{2}(\hat{T}^{pn^{\gamma}}x)
%	\Big|d\mu(x)+B\ep
%	\\ \overset{{\eqref{eq24}, \eqref{eq41}, }\atop\eqref{eq20}}\le &
%		\int_{X}\limsup_{M\to\infty}\Big|\E_{n\in I_{\lfloor M/\delta \rfloor}}f_{1}(\hat{T}^{qn^{\gamma}}x)f_{2}(\hat{T}^{pn^{\gamma}}x)	\E_{\mu}(\hat{F}|\mathcal{I}(\hat{S}))(\hat{S}^{P(n)}x) \\ & \hspace{3cm} -
%\E_{\mu}(\hat{F}|\mathcal{I}(\hat{S}))(x)\E_{n\in I_{\lfloor M/\delta \rfloor }}f_{1}(\hat{T}^{qn^{\gamma}}x)f_{2}(\hat{T}^{pn^{\gamma}}x)
%	\Big|d\mu(x)+B\ep
	\\ \overset{{\eqref{eq11}, \eqref{eq24}, }\atop\eqref{eq41},\eqref{eq20}}\le &
	32\ep.
\end{align*}

Note that the limit in \eqref{eq25} exists almost everywhere. By the above calculations, for any $k\in \N$, we can select proper positive real number $\delta_k$ such that
\begin{align}
	&\int_{X}\limsup_{M\to\infty}\Big|\frac{1}{M}\int_{0}^{M}f_{1}(T^{t^{\gamma}}x)f_{2}(T^{at^{\gamma}}x)f_{3}(S^{Q(t)}x)dt \label{eq65}\\ & \hspace{3cm} -
\E_{\mu}(F_k|\mathcal{I}(S_k))(x)\E_{n\in I_{\lfloor M\rfloor}}f_{1}(T_{k}^{n^{\gamma}}x)f_{2}(T_{k}^{an^{\gamma}}x)
	\Big|d\mu(x)<\frac{1}{4^k}, \notag
\end{align}
where $T_k=T^{\delta_{k}^{\gamma}},S_k=S^{\delta_{k}^{s}}$ and for any $x\in X_0\cap X_1$, $F_{k}(x)=\displaystyle \int_{0}^{1}f_{3}(S_{k}^{t}x)dt$. Moreover, we require that the sequence $\{\delta_k\}_{k\in\N}$ satisfies the following:
\begin{itemize}
	\item $\lim_{k\to\infty}\delta_k=0$.
	\item For any $k\in\N$, $\delta_k>\delta_{k+1}$ and $\delta_k/\delta_{k+1}\in \N$.
	\item For any $k\in \N$ and any $x\in X_0\cap X_1$,
	 $|F_{k}(x)-F_{k+1}(x)|<\frac{1}{4^{k}}$.
	 \item For any $k\in\N$, any $x\in X_0$ and any $t\in [0,1]$, we have
	 \begin{equation}\label{eq104}
	 |f_{1}(T^{\delta_{k}t}x)-f_{1}(x)|<\frac{1}{8^{k}(\norm{f_2}_{\infty}+1)},\quad |f_{2}(T^{a\delta_{k}t}x)-f_{2}(x)|<\frac{1}{8^{k}(\norm{f_1}_{\infty}+1)}.
	 \end{equation}
\end{itemize}
Clearly, there exists $F\in L^{\infty}(\mu)$ such that
\begin{equation}\label{eq68}
	\lim_{k\to\infty}F_k(x)=F(x)
\end{equation}
uniformly on $X_0\cap X_1$. Let $\displaystyle\mathcal{I}(S_{\infty})=\bigcap_{k\ge 1}\mathcal{I}(S_k)$. Then by Martingle Theorem and the fact that for any $k\in \N$, $\mathcal{I}(S_k)\supset \mathcal{I}(S_{k+1})$, we have
\begin{equation}\label{eq69}
	\lim_{k\to\infty}\E_{\mu}(F|\mathcal{I}(S_k))(x)=\E_{\mu}(F|\mathcal{I}(S_{\infty}))(x)
\end{equation} almost everywhere.

When $\gamma\in(0,1)$, by \eqref{eq59} and \eqref{eq72}, for $\mu$-a.e. $x\in X$ and any $k\in\N$,
\begin{equation}\label{eq71}
	\begin{split}
		\lim_{M\to\infty}\frac{1}{M}\int_{0}^{M}f_{1}(T_{k}^{t^{\gamma}}x)f_{2}(T_{k}^{at^{\gamma}}x)dt=
		\lim_{M\to\infty}\E_{n\in I_{\lfloor M\rfloor}}f_{1}(T_{k}^{n^{\gamma}}x)f_{2}(T_{k}^{an^{\gamma}}x).
	\end{split}
\end{equation}

When $\gamma=1$, by \eqref{eq104}, for any $k\in\N$,
\begin{equation}\label{eq105}
	\int_{X}\limsup_{M\to\infty}\Big|\E_{n\in I_{\lfloor M\rfloor}}f_{1}(T_{k}^{n}x)f_{2}(T_{k}^{an}x)-\frac{1}{M}\int_{0}^{M}f_{1}(T_{k}^{t}x)f_{2}(T_{k}^{at}x)dt
	\Big|d\mu(x)<\frac{1}{4^k}.
\end{equation}

For any $k\in \N$, we have
\begin{align}
	& \int_{X}\limsup_{M\to\infty}\Big|\frac{1}{M}\int_{0}^{M}f_{1}(T^{t^{\gamma}}x)f_{2}(T^{at^{\gamma}}x)f_{3}(S^{Q(t)}x)dt \notag\\ & \hspace{3cm} -
\E_{\mu}(F|\mathcal{I}(S_{\infty}))(x)\frac{1}{M}\int_{0}^{M}f_{1}(T_{k}^{t^{\gamma}}x)f_{2}(T_{k}^{at^{\gamma}}x)dt\Big|d\mu(x) \notag
	\\ \le &
	\int_{X}\limsup_{M\to\infty}\Big|\frac{1}{M}\int_{0}^{M}f_{1}(T^{t^{\gamma}}x)f_{2}(T^{at^{\gamma}}x)f_{3}(S^{Q(t)}x)dt \label{eq73}\\ & \hspace{3cm} -
\E_{\mu}(F_k|\mathcal{I}(S_k))(x)\E_{n\in I_{\lfloor M\rfloor}}f_{1}(T_{k}^{n^{\gamma}}x)f_{2}(T_{k}^{an^{\gamma}}x)
	\Big|d\mu(x)\notag
	\\ & +
	\int_{X}\limsup_{M\to\infty}\Big|\Big(\E_{\mu}(F|\mathcal{I}(S_k))-\E_{\mu}(F|\mathcal{I}(S_{\infty}))\Big)(x)\notag\\ & \hspace{3cm} \times\E_{n\in I_{\lfloor M\rfloor}}f_{1}(T_{k}^{n^{\gamma}}x)f_{2}(T_{k}^{an^{\gamma}}x)\Big|d\mu(x)\notag
	\\  & +
	\int_{X}\limsup_{M\to\infty}\Big|\E_{\mu}(F_k-F|\mathcal{I}(S_k))(x)\E_{n\in I_{\lfloor M\rfloor}}f_{1}(T_{k}^{n^{\gamma}}x)f_{2}(T_{k}^{an^{\gamma}}x)\Big|d\mu(x)\notag
	\\ & +
	\int_{X}\limsup_{M\to\infty}\Big|\E_{\mu}(F|\mathcal{I}(S_{\infty}))(x)\E_{n\in I_{\lfloor M\rfloor}}f_{1}(T_{k}^{n^{\gamma}}x)f_{2}(T_{k}^{an^{\gamma}}x) \notag\\ & \hspace{3cm} -
\E_{\mu}(F|\mathcal{I}(S_{\infty}))(x)\frac{1}{M}\int_{0}^{M}f_{1}(T_{k}^{t^{\gamma}}x)f_{2}(T_{k}^{at^{\gamma}}x)dt
	\Big|d\mu(x)\notag
	\\ \overset{{\eqref{eq65};}\atop\eqref{eq71}\ \text{or}\ \eqref{eq105}}\le &
	\norm{F_{k}-F}_{2}+\norm{\E_{\mu}(F|\mathcal{I}(S_k))-\E_{\mu}(F|\mathcal{I}(S_{\infty}))}_{2}+\frac{8}{4^k}.\notag
\end{align}
Note that by Theorem \ref{thm6} and Lemma \ref{lem1}, for any $k\in \N$ and $\mu$-a.e. $x\in X$, we have $$\lim_{M\to\infty}\frac{1}{M}\int_{0}^{M}f_{1}(T^{t}x)f_{2}(T^{at}x)dt=\lim_{M\to\infty}\frac{1}{M}\int_{0}^{M}f_{1}(T_{k}^{t^{\gamma}}x)f_{2}(T_{k}^{at^{\gamma}}x)dt.$$
Therefore, by \eqref{eq73}, \eqref{eq68} and \eqref{eq69}, we have
$$L(f_1,f_2,f_3)=\E_{\mu}(F|\mathcal{I}(S_{\infty}))(x)\lim\limits_{M\to\infty}\frac{1}{M}\int_{0}^{M}f_{1}(T^{t}x)f_{2}(T^{at}x)dt$$ almost everywhere.

Now, it is left to verify $\mathcal{I}(S_{\infty})=\mathcal{I}((S^t)_{t\in\R})\ (\text{mod}\ \mu)$ and $\E_{\mu}(F|\mathcal{I}((S^t)_{t\in\R}))=\E_{\mu}(f_{3}|\mathcal{I}((S^t)_{t\in\R}))$ almost everywhere.

Let $X\to \mathcal{M}(X,\X),x\mapsto\mu_x$ be the disintegration of $\mu$ with respect to $\mathcal{I}((S^t)_{t\in\R})$. Then for $\mu$-a.e. $y\in X$, $\mu_y$ is $(S^t)_{t\in\R}$-invariant. By \eqref{eq68}, for $\mu$-a.e. $y\in X$, we have
\begin{align}
	& \E_{\mu}(F|\mathcal{I}((S^t)_{t\in\R}))(y)=\int_{X}F(x)d\mu_{y}(x)=\lim_{k\to\infty}\int_{X}\int_{0}^{1}f_{3}(S_{k}^{t}x)dtd\mu_{y}(x) \notag
	\\ = &
	\lim_{k\to\infty}\int_{0}^{1}\int_{X}f_{3}(S_{k}^{t}x)d\mu_{y}(x)dt=\int_{X}f_{3}(x)d\mu_{y}(x)=\E_{\mu}(f_{3}|\mathcal{I}((S^t)_{t\in\R}))(y). \label{eq74}
\end{align}

By the definitions of $\mathcal{I}(S_{\infty}),S_1,S_2,\ldots$, we have $\mathcal{I}(S_{\infty})\supset\mathcal{I}((S^t)_{t\in\R})(\text{mod}\ \mu)$. To verify that $\mathcal{I}(S_{\infty})\subset\mathcal{I}((S^t)_{t\in\R})(\text{mod}\ \mu)$, it suffices to show that for any $A\in \mathcal{I}(S_{\infty})$ and any $t\in \R$, $\displaystyle\mu(S^{t}A\backslash A)=\int_{X}1_{S^{t}A}(1_{S^{t}A}-1_{A})d\mu=0$.
Note that for any $t\in\R,k\in\N$ and any $A\in \mathcal{I}(S_{\infty})$, by the definition of $\mathcal{I}(S_{\infty})$, we have $$\mu(S^{t}A\backslash A)=\mu(S^{\text{sgn}(t)(|t|-\lfloor |t|/\delta_{k}^{s}\rfloor \delta_{k}^{s})}A\backslash A),$$ where $\text{sgn}(t)$ denotes the sign of $t$.
This means that if for any $A\in \mathcal{I}(S_{\infty})$ and any real sequence $\{c_n\}_{n\in\N}$ with $\lim_{n\to\infty}c_n=0$, the following holds: $$\lim_{n\to\infty}\int_{X}1_{S^{c_n}A}1_{A}d\mu=\mu(A),$$
then it can be concluded that $\mathcal{I}(S_{\infty})\subset\mathcal{I}((S^t)_{t\in\R})(\text{mod}\ \mu)$.
Note that $\mathcal{A}((S^t)_{t\in \R})$ is dense in $L^{2}(\mu)$. Then it is left to show that for any real-valued $g\in \mathcal{A}((S^t)_{t\in \R})$ and any real sequence $\{c_n\}_{n\in\N}$ with $\lim_{n\to\infty}c_n=0$, $$\lim_{n\to\infty}\int_{X}g(S^{c_n}x)g(x)d\mu(x)=\int_{X}(g(x))^{2}d\mu(x).$$

Choose $\rho>0$  and  real-valued $g\in \mathcal{A}((S^t)_{t\in \R})$ arbitrarily. By (\textbf{S2}), there exists $n_0\in \N$ such that for any $n\ge n_0$, and we have
$$\Big|\int_{X}g(x)\big(g(S^{c_n}x)-g(x)\big)d\mu(x)\Big|\le \norm{g}_{\infty}\rho.$$
Since $\rho$ is arbitrary, $$\lim_{n\to\infty}\int_{X}g(S^{c_n}x)g(x)d\mu(x)=\int_{X}(g(x))^{2}d\mu(x).$$
Therefore, we have
\begin{equation}\label{eq75}
	\mathcal{I}(S_{\infty})=\mathcal{I}((S^t)_{t\in\R})\ (\text{mod}\ \mu).
\end{equation}
The whole proof is complete. \hfill $\square$
\begin{rem}
	In fact, by \eqref{eq68}, \eqref{eq69}, \eqref{eq75} and \eqref{eq74}, we have shown that
	\begin{equation}\label{eq76}
		\lim_{k\to\infty}\E_{\mu}(F_k|\mathcal{I}(S_k))(x)=\E_{\mu}(F|\mathcal{I}(S_{\infty}))(x)=\E_{\mu}(f_{3}|\mathcal{I}((S^t)_{t\in\R}))(x)
	\end{equation}
	almost everywhere.
\end{rem}

\subsection{Proof of Theorem C}
\medskip

Given that the assumptions underlying Theorem B and Theorem C are identical, we will utilize certain notions and statements from the proof of Theorem B in our proof of Theorem C.

By Theorem \ref{thm2}, to prove Theorem C, it suffices to show that for any $g_1\in \mathcal{A}((T^t)_{t\in \R})$, any $g_2\in \mathcal{A}((S^t)_{t\in \R})$ and $\mu$-a.e. $x\in X$,
\begin{equation}\label{eq28}
\begin{split}
&\lim\limits_{\min\{M_1,\ldots,M_k\}\to\infty}\frac{1}{M_{1}\cdots M_{k}}\int_{\prod_{j=1}^{k}[0, M_j]}g_{1}(T^{|{\bf t}|}x)g_{2}(S^{{|{\bf t}|}^{2}+cP({\bf t})}x)d{\bf t}\\&\hspace{5cm}=\E_{\mu}(g_1|\mathcal{I}((T^t)_{t\in\R}))(x)\E_{\mu}(g_2|\mathcal{I}((S^t)_{t\in\R}))(x).
\end{split}
\end{equation}
	
Let $g_1\in\mathcal{A}((T^t)_{t\in \R})$ and $g_2\in\mathcal{A}((S^t)_{t\in \R})$ such that $\norm{g_1}_{\infty}\le 1$ and $\norm{g_2}_{\infty}\le 1$. For any $\theta>0$, by (\textbf{S2}), there exists $\eta(\theta)\in (0,1)$ such that for any $x_0\in X_0$, $x_1\in X_1$, and any $t\in (-\eta(\theta),\eta(\theta))$, we have
\begin{equation}\label{eq12}
|g_1(T^{t}x_0)-g_1(x_0)|<\theta,\quad |g_2(S^{t}x_1)-g_2(x_1)|<\theta.
\end{equation}	

The rest of the proof is divided into three steps.

\noindent\textbf{Step I. Reduce the ergodic averages on the left-hand side of \eqref{eq28} to discrete-time form.}

Choose $\ep>0$ arbitrarily. We write $P(t_1,\ldots,t_k)$ as $\sum_{j=1}^{k}l_jt_j$. Choose $\delta$ such that $0<\delta<\eta(\ep)/16k$, $|(l_1+\cdots+l_k)c\delta|<\eta(\ep)/4$ and $c/\delta\in\Z$. Let $Q({\bf n})=Q(n_1,\ldots,n_k)= (\sum_{j=1}^{k}n_{j})^{2}+ (c/\delta)(\sum_{j=1}^{k}l_{j}n_{j})$.

Let $T_{1}=T^{\delta}, S_{1}=S^{\delta^2}$. Now, we need the following ergodic averages for an approximation argument:
\begin{equation}\label{eq53}
	 \E_{{\bf n}\in \prod_{j=1}^{k}I_{N_j}} g_{1} (T_{1}^{|{\bf n}|}x)\int_{[0,1]^{k}} g_{2}(S_{1}^{Q({\bf n})+2 |{\bf n}|{|{\bf t}|}}x)d{\bf t}.
\end{equation}

 Note that for any $x\in X_0\cap X_1$ and any $N_1,\ldots,N_k\in\N$,
\begin{align}
	& \frac{1}{\delta^{k}N_{1}\cdots N_{k}}\int_{ \prod_{j=1}^{k}[0,N_j\delta]}g_{1} (T^{{|{\bf t}|}}x)g_{2}(S^{{|{\bf t}|}^{2}+cP({\bf t})}x)d{\bf t} \notag
%	\\ = & \E_{{\bf n}\in \prod_{j=1}^{k}I_{N_j}}\frac{1}{\delta^{k}}\int_{ [0,\delta]^{k}}g_{1}(T^{\sum_{j=1}^{k}(n_{j} \delta+t_j)}x)g_{2} (S^{(\sum_{j=1}^{k}(n_{j}\delta+t_j))^{2}+c(\sum_{j=1}^{k}l_{j} (n_{j}\delta+t_j))}x)d{\bf t}
	\\ = & \E_{{\bf n}\in \prod_{j=1}^{k}I_{N_j}}\frac{1}{\delta^{k}}\int_{ [0,\delta]^{k}}g_{1}(T^{|{\bf n}|\delta+|{\bf t}|}x) g_{2}(S^{Q({\bf n})\delta^{2}+2\delta |{\bf n}|{|{\bf t}|}+|{\bf t}|^{2}+c(\sum_{j=1}^{k}l_{j} t_{j})}x)d{\bf t}. \label{eq54}
\end{align}

By \eqref{eq54}, the definitions of $T_1,S_1$ and the setting of $\delta$, we have that for any $x\in X_0\cap X_1$ and any $N_1,\ldots,N_k\in\N$,
\begin{align}
& \Big|\frac{1}{\delta^{k}N_{1}\cdots N_{k}}\int_{ \prod_{j=1}^{k}[0,N_j\delta]}g_{1}(T^{{|{\bf t}|}}x) g_{2}(S^{{|{\bf t}|}^{2} +cP({\bf t})} x)d{\bf t} \notag\\ &\hspace{2.0cm}-
			\E_{{\bf n}\in \prod_{j=1}^{k}I_{N_j}} g_{1} (T_{1}^{|{\bf n}|}x)\int_{ [0,1]^{k}} g_{2}(S_{1}^{Q({\bf n})+2 |{\bf n}|{|{\bf t}|}}x)d{\bf t}\Big| \notag
	\\ \overset{\eqref{eq54}} = &
	\Big| \E_{{\bf n}\in \prod_{j=1}^{k}I_{N_j}}\frac{1}{\delta^{k}}\int_{[0,\delta]^{k}}g_{1}(T^{|{\bf n}|\delta+|{\bf t}|}x) g_{2}(S^{Q({\bf n})\delta^{2}+2\delta |{\bf n}|{|{\bf t}|}+|{\bf t}|^{2}+c(\sum_{j=1}^{k}l_{j} t_{j})}x)d{\bf t} \notag\\ &\hspace{2.0cm}-
	\E_{{\bf n}\in \prod_{j=1}^{k}I_{N_j}}\frac{1}{\delta^{k}}\int_{ [0,\delta]^{k}}g_{1}(T^{|{\bf n}|\delta}x) g_{2}(S^{Q({\bf n})\delta^{2}+2\delta |{\bf n}|{|{\bf t}|}}x)d{\bf t}
	 \Big| \label{eq55}
%\\ \overset{(\textbf{S1})}\le &
%			\norm{g_2}_{\infty}\E_{{\bf n}\in \prod_{j=1}^{k}I_{N_j}}\frac{1}{\delta^{k}} \int_{ [0,\delta]^{k}}\Big| g_{1}(T^{|{\bf n}|\delta+|{\bf t}|}x)-g_{1} (T^{|{\bf n}|\delta}x) \Big|d{\bf t}\notag
%\\ & \hspace{0.5cm}+ \norm{g_1}_{\infty}\E_{{\bf n}\in \prod_{j=1}^{k}I_{N_j}}\frac{1}{\delta^{k}}\int_{ [0,\delta]^{k}}\Big|g_{2}(S^{Q({\bf n})\delta^{2}+2\delta |{\bf n}|{|{\bf t}|}+|{\bf t}|^{2}+c(\sum_{j=1}^{k}l_{j} t_{j})}x)\notag \\ &\hspace{1.0cm}- g_{2}(S^{Q({\bf n})\delta^{2}+2\delta |{\bf n}|{|{\bf t}|}}x)\Big|d{\bf t} \notag
			\\ \overset{(\textbf{S1}), (\ref{eq12})}\le &
			2\epsilon. \notag
\end{align}

For any $x\in X_0\cap X_1$, let $\displaystyle F(x)=\int_{[0,1]^{k}}g_{2}(S^{|{\bf t}|}x)d{\bf t}$. Then for any $x\in X_0\cap X_1$ and any $N_1,\ldots,N_k\in\N$, we have
\begin{align}
	&\E_{{\bf n}\in \prod_{j=1}^{k}I_{N_j}} g_{1} (T_{1}^{|{\bf n}|}x)\int_{[0,1]^{k}} g_{2}(S_{1}^{Q({\bf n})+2 |{\bf n}|{|{\bf t}|}}x)d{\bf t} \label{eq77}
	\\ = &
	\frac{N_{1}\cdots N_{k}-1}{N_{1}\cdots N_{k}}\E_{{\bf n}\in \prod_{j=1}^{k}I_{N_j}\backslash \{{\bf 0}\}} g_{1}(T_{1}^{|{\bf n}|}x) \Big(\E_{{\bf i}\in (I_{2|\bf n|})^{k}} S_{1}^{|\bf i|} F\Big)(S_{1}^{Q({\bf n})}x)+\frac{1}{N_{1}\cdots N_{k}}g_{1}(x)g_{2}(x). \notag
\end{align}

By \eqref{eq55}, \eqref{eq77} and a calculation analogous to the
\textbf{Reduction Calculation} in the proof of Theorem B, to show that the limit on the left-hand side of \eqref{eq28} exists almost everywhere, it suffices to show that the limit
\begin{equation}\label{eq29}
\lim\limits_{\min\{N_1,\ldots,N_k\}\to\infty}\E_{{\bf n}\in \prod_{j=1}^{k}I_{N_j}\backslash \{{\bf 0}\}} g_{1}(T_{1}^{|{\bf n}|}x) \Big(\E_{{\bf i}\in (I_{2|\bf n|})^{k}} S_{1}^{|\bf i|} F\Big)(S_{1}^{Q({\bf n})}x)
\end{equation}
exists for $\mu$-a.e. $x\in X$.
		
\noindent\textbf{Step II. Verify that the limit in \eqref{eq29} exists almost everywhere.}

By Theorem \ref{thm3-2}, we have
\begin{equation}\label{eq34}
  \lim\limits_{N\to\infty}\E_{{\bf n}\in (I_N)^{k}}F(S_{1}^{|{\bf n}|}x)=\E_{\mu}(F|\mathcal{I}(S_1))(x)
\end{equation}
for $\mu$-a.e. $x\in X$. For any $r,l\in\N$, let
$$E_{l}^{r}=\left \{x\in X:\sup_{\lambda\ge l}\Big|\E_{{\bf n}\in (I_{\lambda})^{k} }F(S_{1}^{|{\bf n}|}x)-\E_{\mu}(F|\mathcal{I}(S_1))(x)\Big|<\frac{1}{r}\right\}.$$
Choose $r\in \N$ arbitrarily. By \eqref{eq34}, there is $l_r\in \N$ such that for any $l\ge l_r$, $\mu(E_{l}^{r})>1-\frac{1}{r}$. Then for $\mu$-a.e. $x\in X_0\cap X_1$ and any $N_1,\cdots,N_k\ge 100l_r$, we have
\begin{align}
& \Big|\E_{{\bf n}\in \prod_{j=1}^{k}I_{N_j}\backslash \{{\bf 0}\}} g_{1}(T_{1}^{|{\bf n}|}x) \Big(\E_{{\bf i}\in (I_{2|\bf n|})^{k}} S_{1}^{|\bf i|} F-\E_{\mu}(F|\mathcal{I}(S_1)) \Big)(S_{1}^{Q({\bf n})}x)\Big| \notag
\\ \le & \frac{2}{N_{1}\cdots N_{k}}\sum_{{\bf n}\in \prod_{j=1}^{k}I_{N_j}\backslash \{{\bf 0}\},\atop |{\bf n}|\le l_r}  \Big| g_{1}(T_{1}^{|{\bf n}|}x) \Big(\E_{{\bf i}\in (I_{2|\bf n|})^{k}} S_{1}^{|\bf i|} F-\E_{\mu}(F|\mathcal{I}(S_1)) \Big)(S_{1}^{Q({\bf n})}x)\Big| \label{eq56}
\\ &\hspace{0.5cm} + \frac{2}{N_{1}\cdots N_{k}}\sum_{{\bf n}\in \prod_{j=1}^{k}I_{N_j}\backslash \{{\bf 0}\},\atop |{\bf n}|> l_r,S_{1}^{Q({\bf n})}x\in E_{l_r}^{r}} \Big| g_{1}(T_{1}^{|{\bf n}|}x) \Big(\E_{{\bf i}\in (I_{2|\bf n|})^{k}} S_{1}^{|\bf i|} F-\E_{\mu}(F|\mathcal{I}(S_1)) \Big)(S_{1}^{Q({\bf n})}x)\Big| \notag\\ &\hspace{1.0cm} +
				\frac{2}{N_{1}\cdots N_{k}}\sum_{{\bf n}\in \prod_{j=1}^{k}I_{N_j}\backslash \{{\bf 0}\},\atop |{\bf n}|> l_r, S_{1}^{Q({\bf n})}x\notin E_{l_r}^{r}} \Big| g_{1}(T_{1}^{|{\bf n}|}x) \Big(\E_{{\bf i}\in (I_{2|\bf n|})^{k}} S_{1}^{|\bf i|} F-\E_{\mu}(F|\mathcal{I}(S_1)) \Big)(S_{1}^{Q({\bf n})}x)\Big| \notag
			\\ \overset{(\textbf{S1})}\le &
			\frac{4(1+l_r)^{k}}{N_{1}\cdots N_{k}}+ \frac{2}{r}+ 4\E_{{\bf n}\in \prod_{j=1}^{k}I_{N_j}}1_{(E_{l_r}^{r})^{c}}(S_{1}^{Q({\bf n})}x). \notag
\end{align}
By \eqref{eq56} and Theorem \ref{thm3-2}, we have
\begin{align*}
& \int_X \limsup\limits_{\min\{N_1,\ldots,N_k\}\to\infty}\Big|\E_{{\bf n}\in \prod_{j=1}^{k}I_{N_j}\backslash \{{\bf 0}\}} g_{1}(T_{1}^{|{\bf n}|}x) \\ & \hspace{4cm}\cdot \Big(\E_{{\bf i}\in (I_{2|\bf n|})^{k}} S_{1}^{|\bf i|} F-\E_{\mu}(F|\mathcal{I}(S_1)) \Big)(S_{1}^{Q({\bf n})}x)\Big| d\mu(x)
\\ \le & 4\int \limsup\limits_{\min\{N_1,\ldots,N_k\}\to\infty}\E_{{\bf n}\in \prod_{j=1}^{k}I_{N_j}}1_{(E_{l_r}^{r})^{c}}(S_{1}^{Q({\bf n})}x) d\mu(x) + \frac{2}{r}
%			\\ \overset{(\ref{eq8})}= & 4\norm{g_1}_{\infty}\norm{g_2}_{\infty} \lim\limits_{\min\{N_1,\ldots,N_k\}\to\infty}\E_{{\bf n}\in \prod_{j=1}^{k}I_{N_j}}\int 1_{(E_{l_0}^{r})^{c}}(S_{1}^{Q({\bf n})}x)d\mu(x) +
%			\frac{2}{r}\norm{g_1}_{\infty}
			 \le
\frac{6}{r}.
\end{align*}
Since $r$ is arbitrary, we have
\begin{equation}\label{eq78}
\lim\limits_{\min\{N_1,\ldots,N_k\}\to\infty}\Big|\E_{{\bf n}\in \prod_{j=1}^{k}I_{N_j}\backslash \{{\bf 0}\}}g_{1}(T_{1}^{|{\bf n}|}x) \Big(\E_{{\bf i}\in (I_{2|\bf n|})^{k}} S_{1}^{|\bf i|} F-\E_{\mu}(F|\mathcal{I}(S_1)) \Big)(S_{1}^{Q({\bf n})}x)\Big|=0
\end{equation}
for $\mu$-a.e. $x\in X$. By \eqref{eq78} and Theorem \ref{thm3-2}, the limit in \eqref{eq29}
exists for $\mu$-a.e. $x\in X$. 

Therefore, the limit on the left-hand side of \eqref{eq28} exists almost everywhere.

\noindent \textbf{Step III. Verify that the equality in \eqref{eq28} holds almost everywhere.}

Similar to the establishment of \eqref{eq65}, by \eqref{eq55}, \eqref{eq77}, \eqref{eq78} and Theorem \ref{thm3-2}, we have that for any $s\in\N$, there exists $\delta_s>0$ such that
\begin{equation}\label{eq79}
	\begin{split}
	& \int_{X}\limsup_{\min\{M_1,\ldots,M_k\}\to\infty}\Big|\frac{1}{M_{1}\cdots M_{k}}\int_{\prod_{j=1}^{k}[0,M_j]}g_{1}(T^{|{\bf t}|}x)g_{2}(S^{{|{\bf t}|}^{2}+cP({\bf t})}x)d{\bf t}\\ & \hspace{4.5cm}-\E_{\mu}(g_{1}|\mathcal{I}(T_s))(x)\E_{\mu}(F_{s}|\mathcal{I}(S_s))(x)\Big|d\mu(x)<\frac{1}{4^s},
	\end{split}
\end{equation}
where $T_s=T^{\delta_{s}},S_s=S^{\delta_{s}^{2}}$, and $F_{s}(x)=\displaystyle \int_{[0,1]^{k}}g_{2}(S_{s}^{|{\bf t}|}x)d{\bf t}$ for any $x\in X_0\cap X_1$. Moreover, we require that the following hold:
\begin{itemize}
	\item $\lim_{s\to\infty}\delta_{s}=0$.
	\item For any $s\in\N$, $\delta_{s}>\delta_{s+1}$ and $\delta_{s}/\delta_{s+1}\in\N$.
	\item For any $s\in\N$ and any $x\in X_{0}\cap X_1$, $|F_{s}(x)-F_{s+1}(x)|<\frac{1}{4^s}$.
\end{itemize}	

Similar to the establishments of \eqref{eq75} and \eqref{eq76}, by Martingle Theorem, we have that for $\mu$-a.e. $x\in X$, the following two equalities hold:
\begin{equation}\label{eq80}
	\lim_{s\to\infty}\E_{\mu}(g_{1}|\mathcal{I}(T_s))(x)=\E_{\mu}(g_{1}|\mathcal{I}((T^t)_{t\in\R}))(x);
\end{equation}
\begin{equation}\label{eq81}
\lim_{s\to\infty}\E_{\mu}(F_{s}|\mathcal{I}(S_s))(x)=\E_{\mu}(g_{2}|\mathcal{I}((S^t)_{t\in\R}))(x).
\end{equation}

By \eqref{eq79}, \eqref{eq80} and \eqref{eq81}, the equality in \eqref{eq28} holds almost everywhere. The whole proof is complete.
	\hfill $\square$

\section{Proof of Theorem D}\label{section-D}

In this section, we prove Theorem D. First we introduce an ergodic theorem, which can be regarded as an extension of \cite[Theorem 2]{B2}. The detailed proof of this theorem is provided in Appendix \ref{Ap1}.

\begin{thm}\label{TE}
	Given $d\in\N$, let $T_1,\ldots,T_d:X\rightarrow X$ be invertible, commuting  measure preserving transformations acting on Lebesgue probability space $(X,\X,\mu)$ and $P_1,\ldots,P_d\in \R[n]$. Then for any $f\in L^{2}(\mu)$, the limit
	$$\lim_{N\to\infty}\E_{n\in I_N}f(T_{1}^{\lfloor P_{1}(n)\rfloor}\cdots T_{d}^{\lfloor P_{d}(n)\rfloor}x)$$
	exists almost everywhere.
\end{thm}

We now proceed to prove Theorem D. The approach that we employ is analogous to that used in the proof of Theorem B.

Without loss of generality, we can suppose that $Q(0)=0$ and the leading coefficient of $Q(t)$ is $1$.

Following a similar argument as in \textbf{Step I} of the proof of Theorem B, we can find some dense subsets $\mathcal{A}((T_{1}^t)_{t\in \R})$, $\ldots$, $\mathcal{A}((T_{d}^t)_{t\in \R})$, and $\mathcal{A}((S^{\bf t})_{{\bf t}\in \R^2})$ of $L^2(\mu)$ and some full measure subsets $X_1, \ldots, X_d, X_{S}$ of $X$, where $X_S$ is $(S^{\bf t})_{{\bf t}\in \R^2}$-invariant and for each $1\le i\le d$, $X_i$ is $(T_{i}^t)_{t\in \R}$-invariant, such that the following hold:

%such that $\mathcal{A}((T_{1}^t)_{t\in \R}),\cdots,\mathcal{A}((T_{d}^t)_{t\in \R}),\mathcal{A}((S^t)_{t\in \R^2})$ are dense subsets of $L^{2}(\mu)$ and  $X_S$ is $(S^{t})_{t\in\R^2}$- and for any $1\le j\le d$, $X_j$ is $(T_{j}^{t})_{t\in\R}$-invariant.
\medskip

\noindent	(\textbf{S3}) For any ${\tilde{f}}_1\in\mathcal{A}((T_{1}^t)_{t\in \R}),\ldots,{\tilde{f}}_d\in\mathcal{A}((T_{d}^t)_{t\in \R})$,  $\tilde{g}\in\mathcal{A}((S^{\bf t})_{{\bf t}\in \R^2})$, and any $x_1\in X_1,\ldots,x_d\in X_d$, $x_S\in X_S$,  we have
$$|{\tilde{f}}_1(x_1)|\le \norm{{\tilde{f}}_1}_{\infty},\ldots,|{\tilde{f}}_d(x_d)|\le \norm{{\tilde{f}}_d}_{\infty},|\tilde{g}(x_S)|\le \norm{\tilde{g}}_{\infty}.$$

\medskip

\noindent	(\textbf{S4}) For any $\epsilon>0$, any ${\tilde{f}}_1\in\mathcal{A}((T_{1}^t)_{t\in \R})$, $\ldots$,  ${\tilde{f}}_d\in\mathcal{A}((T_{d}^t)_{t\in \R})$ and any $\tilde{g}\in \mathcal{A}((S^{\bf t})_{{\bf t}\in \R^2})$, there is $\eta>0$ such that for any $x_1\in X_1,\ldots,x_d\in X_d$, $x_S\in X_S$, any $t\in (-\eta,\eta)$ and any ${\bf t}\in\R^2$
with $|{\bf t}|<\eta$, we have
\begin{equation*} |{\tilde{f}}_{1}(T_{1}^{t}x_1)- {\tilde{f}}_{1}(x_1)|<\epsilon, \ldots, |{\tilde{f}}_{d}(T_{d}^{t}x_d)-{\tilde{f}}_{d}(x_d)| <\epsilon, \ |\tilde{g}(S^{{\bf t}}x_S)-\tilde{g} (x_S)|<\epsilon.
\end{equation*}

\subsection{Proof of $(1)$ of Theorem D}

We rewrite (\ref{TB1}) as $$\lim_{M\to\infty}\frac{1}{M}\int_{0}^{M}f_{1}(T_{1}^{(t^{\beta})^{\alpha_1/\beta}}x)\cdots f_{d}(T_{d}^{(t^{\beta})^{\alpha_d/\beta}}x)g(S^{Q(t^{\beta})\vec{e}_1}S^{t^{\beta}\vec{e}_2}x)dt.$$
By Lemma \ref{lem1}, we can assume that $\beta=1$. At this point, we need to prove the following:

Let $0<\gamma_1<\cdots<\gamma_d<1$. For any $f_1,\ldots,f_d,g\in L^{\infty}(\mu)$, we have
\begin{equation}\label{eq32}
\begin{split}
  &\lim\limits_{M\to\infty}\frac{1}{M}\int_{0}^{M} f_{1}(T_{1}^{t^{\gamma_1}}x)\cdots f_{d}(T_{d}^{t^{\gamma_d}}x)g(S^{Q(t)\vec{e}_1}S^{t\vec{e}_2}x)dt
  \\ & \hspace{4cm} =
 \E_{\mu}(g|\mathcal{I}((S^{\bf t})_{{\bf t}\in\R}))(x)\prod_{j=1}^{d}\E_{\mu}(f_{j}|\mathcal{I}((T_{j}^{t})_{t\in\R}))(x)
  \end{split}
\end{equation}
almost everywhere.

By Lemma \ref{lem4} and Theorem \ref{thm2}, to verify that the equality in \eqref{eq32} holds for $\mu$-a.e. $x\in X$, it suffices to show that for any $f_1\in\mathcal{A}((T_{1}^t)_{t\in \R}),\ldots,f_d\in\mathcal{A}((T_{d}^t)_{t\in \R})$, and any $g\in\mathcal{A}((S^{\bf t})_{{\bf t}\in \R^2})$,
\begin{equation}\label{eq33}
	\begin{split}
	&\lim\limits_{M\to\infty}\frac{1}{M}\int_{0}^{M} f_{1}(T_{1}^{t^{\gamma_1}}x)\cdots f_{d}(T_{d}^{t^{\gamma_d}}x)g(S^{Q(t)\vec{e}_1}S^{t\vec{e}_2}x)dt
	\\ & \hspace{4cm} =
\E_{\mu}(g|\mathcal{I}((S^{\bf t})_{{\bf t}\in\R}))(x)\prod_{j=1}^{d}\E_{\mu}(f_{j}|\mathcal{I}((T_{j}^{t})_{t\in\R}))(x)
	\end{split}
\end{equation}
almost everywhere.

The rest of the proof is divided into three steps.

\noindent\textbf{Step I. Reduce the left-hand side of \eqref{eq33} to some discrete-time ergodic averages.}
		
Let $f_1\in\mathcal{A}((T_{1}^t)_{t\in \R}), \ldots, f_d\in\mathcal{A}((T_{d}^t)_{t\in \R})$, and  $g\in\mathcal{A}((S^{\bf t})_{{\bf t}\in \R^2})$ such that $\norm{f_i}_{\infty}\le 1,1\le i\le d$ and $\norm{g}_{\infty}\le 1$. Then by (\textbf{S4}), we have that
for any $\theta>0$, there exists $\eta(\theta)\in (0,1)$ such that for any $x_1 \in X_1, \ldots,x_d\in X_d$, $x_S\in X_S$ and any $t\in (-\eta(\theta),\eta(\theta))$, ${\bf t}\in \R^2$ with $|{\bf t}|<\eta(\theta)$, \begin{equation}\label{eq14} |f_{1}(T_{1}^{t}x_1)-f_{1}(x_1)| <\theta, \ldots, |f_{d}(T_{d}^{t}x_d)-f_{d}(x_d)|<\theta, \ |g(S^{\bf t}x_S)-g(x_S)|<\theta.
\end{equation}
			
Choose $\ep>0$ arbitrarily. Choose $\d\in (0,1)$ such that $0<\delta<\eta(\ep)/4$ and $Q([0,\delta])\subset (-\eta(\ep)/4,\eta(\ep)/4)$.

Since $Q\in \R[t]$ with $\deg Q=s\ge 2,Q(0)=0$ and the leading coefficient of  $Q(t)$ is $1$, there exist $\tilde{P}(n), \tilde{P}_1,\ldots, \tilde{P}_{s-1}\in \R[n]$ such that the following hold:
\begin{itemize}
	\item $\tilde{P}(0)=0$.
	\item The leading coefficient of $\tilde{P}_{1}(n)$ is $s$.
	\item For each $i\in \{1,\ldots,s-1\}$, $\deg \tilde{P}_{i}(n)=s-i$.
	\item Let $P(n,t)=\tilde{P}(n)+\sum_{i=1}^{s-1}\tilde{P}_{i}(n)t^{i}$. Then for any $n\in\Z,t\in\R$, we have
	\begin{equation}\label{eq57}
	Q(n\delta+t)=Q(t)+P(n,t/\delta)\delta^{s},
	\end{equation}
\end{itemize}
%for any $n\in\Z,t\in\R$, we have
%\begin{equation}\label{eq57}
%	Q(n\delta+t)=Q(t)+P(n,t/\delta)\delta^{s},
%\end{equation}
%where  $P(n,t)=\tilde{P}(n)+\sum_{i=1}^{s-1}\tilde{P}_{i}(n)t^{i}$, $\tilde{P}(n), \tilde{P}_1,\ldots, \tilde{P}_{s-1}\in \R[n],\tilde{P}(0)=0$, leading coefficient of $\tilde{P}_{1}(n)$ is $s$ and $\deg \tilde{P}_{i}(n)=s-i$ for each $i\in \{1,\ldots,s-1\}$.

Let $\tilde{T_j}=T_{j}^{\delta^{\gamma_{j}}}$ for each $j\in \{1,\ldots,d\}$, $S_1=S^{\delta^s\vec{e}_1}$ and $S_2=S^{\delta\vec{e}_2}$. Next, we aim to reduce the pointwise existence of the limit on the left-hand side of \eqref{eq33} to the pointwise existence of the following limit for an arbitrary natural number $k$: $$\lim_{N\to\infty}\E_{n\in I_N}f_{1}(\tilde{T_1}^{n^{\gamma_1}}x)\cdots f_{d}(\tilde{T_d}^{n^{\gamma_d}}x) \int_{0}^{1}(\frac{1}{k}\sum_{i=0}^{k-1}S_{1}^{i}g)(S_{2}^{n+t}x)dt.$$ 

Now, we need the following ergodic averages for an approximation argument:
\begin{equation}
\E_{n\in I_N}f_{1}(\tilde{T_1}^{n^{\gamma_1}}x)\cdots f_{d}(\tilde{T_d}^{n^{\gamma_d}}x)\int_{0}^{1} g(S_{1}^{P(n,t)}S_{2}^{n}x)dt.
\end{equation}

For any $x\in X_S\cap X_1\cap \cdots \cap X_d$ and any $N\in \N$, we have
\begin{align}
	& \frac{1}{N\delta}\int_{0}^{N\delta}f_{1}(T_{1}^{t^{\gamma_1}}x)\cdots f_{d}(T_{d}^{t^{\gamma_d}}x)g(S^{Q(t)\vec{e}_1}S^{t\vec{e}_2}x)dt\notag
%	\\ = &
%	\E_{n\in I_N}\frac{1}{\delta}\int_{0}^{\delta}f_{1}(T_{1}^{(n\delta+t)^{\gamma_1}}x)\cdots f_{d}(T_{d}^{(n\delta+t)^{\gamma_d}}x)g(S^{Q(n\delta+t)\vec{e}_1}S^{(n\delta+t)\vec{e}_2}x)dt \label{eq58}
	\\ \overset{\eqref{eq57}}= &
	\E_{n\in I_N}\int_{0}^{1}f_{1}((T_{1}^{\delta^{\gamma_{1}}})^{(n+t)^{\gamma_1}}x)\cdots f_{d}((T_{d}^{\delta^{\gamma_{d}}})^{(n+t)^{\gamma_d}}x)g(S^{(P(n,t)\delta^{s}+Q(\delta t))\vec{e}_1}S^{(n+ t)\delta\vec{e}_2}x)dt.\label{eq58}
\end{align}

%Let $A=\prod_{j=1}^{d}\norm{f_j}_{\infty},G_j=\norm{f_1}_{\infty}\cdots \norm{f_{j-1}}_{\infty}\norm{f_{j+1}}_{\infty}\norm{f_d}_{\infty}$ for each $j\in \{1,\ldots,d\}$.

As previously stated, invoking Mean Value Theorem, there exists a natural number $N_0\in \N$ such that for any $n\ge N_0$, any $t\in [0,1]$ and any $1\le j\le d$, the inequality $(n+t)^{\gamma_j}-n^{\gamma_j}<\delta$ holds.

By  \eqref{eq58}, the definitions of $\tilde{T_1},\ldots,\tilde{T_d},S_1,S_2$ and the setting of $\delta$, for any $x\in X_S\cap X_1\cap \cdots \cap X_d$ and any $N\ge 100N_0$, we have
\begin{align}
 & \Big|\frac{1}{N\delta}\int_{0}^{N\delta}f_{1}(T_{1}^{t^{\gamma_1}}x)\cdots f_{d}(T_{d}^{t^{\gamma_d}}x)  g(S^{Q(t)\vec{e}_1}S^{t\vec{e}_2}x)dt \notag\\ &\hspace{1cm}-\E_{n\in I_N}f_{1}(\tilde{T_1}^{n^{\gamma_1}}x)\cdots f_{d}(\tilde{T_d}^{n^{\gamma_d}}x)\int_{0}^{1} g(S_{1}^{P(n,t)}S_{2}^{n}x)dt
		 	\Big|\notag
		 	\\ \overset{\eqref{eq58}}= &
		 \Big| 	\E_{n\in I_N}\int_{0}^{1}f_{1}((T_{1}^{\delta^{\gamma_{1}}})^{(n+t)^{\gamma_1}}x)\cdots f_{d}((T_{d}^{\delta^{\gamma_{d}}})^{(n+t)^{\gamma_d}}x)g(S^{(P(n,t)\delta^{s}+Q(\delta t))\vec{e}_1}S^{(n+ t)\delta\vec{e}_2}x)dt \notag\\&\hspace{1cm}-
		 \E_{n\in I_N}f_{1}((T_{1}^{\delta^{\gamma_{1}}})^{n^{\gamma_1}}x)\cdots f_{d}((T_{d}^{\delta^{\gamma_{d}}})^{n^{\gamma_d}}x)\int_{0}^{1}g(S^{P(n,t)\delta^{s}\vec{e}_1}S^{n\delta\vec{e}_2}x)dt
		 	\Big|\label{eq18}
%\\ \overset{(\textbf{S3})}\le & \sum_{j=1}^{d}\norm{g}_{\infty}G_j \E_{n\in I_N\backslash I_{N_0}}\int_{0}^{1}|f_{j}((T_{j}^{\delta^{\gamma_{j}}})^{(n+t)^{\gamma_j}}x)-f_{j}((T_{j}^{\delta^{\gamma_{j}}})^{n^{\gamma_j}}x)|dt +  \frac{2N_0}{N}A\norm{g}_{\infty}\notag\\ &\hspace{0.5cm} +
%		 	A \E_{n\in I_N\backslash I_{N_0}} \int_{0}^{1}|g(S^{(P(n,t)\delta^{s}+Q(\delta t))\vec{e}_1}S^{(n+t)\delta \vec{e}_2}x)
%- g(S^{P(n,t)\delta^{s}\vec{e}_1} S^{n\delta\vec{e}_2}x)|dt\notag
		 	\\ \overset{(\textbf{S3}), \atop (\ref{eq14})}\le & \frac{2N_0}{N}+2d\epsilon.\notag
\end{align}

For any $x\in X_S\cap X_1\cap \cdots \cap X_d$, let $\displaystyle F(x)=\int_{0}^{1}g(S_{1}^{t}x)dt$. By employing a calculation  analogous to \eqref{eq24}, the arguments similar to those used in establishing \eqref{eq41}, and the \textbf{Reduction Process} in the proof of Theorem B (we use Theorem \ref{TE} instead of Theorem \ref{BPET} here), we have
\begin{equation}\label{eq15}
\begin{split}
& \limsup_{N\to\infty}\Big|\E_{n\in I_N}\prod_{j=1}^{d}f_{j} (\tilde{T_j}^{n^{\gamma_j}}x)\Big(\E_{\mu}(F|\mathcal{I}(S_1))(S_{1}^{\lfloor\tilde{P}(n)\rfloor}S_{2}^{n}x)-\int_{0}^{1}g(S_{1}^{P(n,t)}S_{2}^{n}x)dt\Big)\Big|\le \ep.
\end{split}
\end{equation}
for $\mu$-a.e. $x\in X$.

%Similar to establishments of \eqref{eq37}, \eqref{eq39} and \eqref{eq40}, by Birkhoff's ergodic theorem, Theorem \ref{thm7}, \eqref{eq14}, Theorem \ref{TE} and $\E_{\mu}(F|\mathcal{I}(S_1))(x)=\E_{\mu}(F|\mathcal{I}(S_1))(S_{1}x)$ almost everywhere, we have
%\begin{equation}\label{eq102}
%\int_X \limsup_{N\to\infty}\Big|\E_{n\in I_N}\prod_{j=1}^{d}f_{j} (\tilde{T_j}^{n^{\gamma_j}}x)\Big(\big(S_{1}^{\tilde{P}(n)}-Id_{X}\big)\E_{\mu}(F|\mathcal{I}(S_1))\Big)(S_{2}^{n}x)\Big|d\mu(x)\le
%3A\ep.
%\end{equation}

By the defintion of $S_1$ and $S_2$, we have that for any $x\in X_S\cap X_1\cap \cdots \cap X_d$ and any $k\in \N$,
\begin{align}
		& \limsup_{N\to\infty}\Big|\E_{n\in I_N} f_{1}(\tilde{T_1}^{n^{\gamma_1}}x)\cdots f_{d}(\tilde{T_d}^{n^{\gamma_d}}x)\int_{0}^{1}\Big(\frac{1}{k} \sum_{i=0}^{k-1}S_{1}^{i}S_{2}^{t}g- \frac{1}{k}\sum_{i=0}^{k-1}S_{1}^{i+t}g\Big)(S_{2}^{n}x)dt\Big|
%		\\ \le & A\limsup_{N\to\infty} \E_{n\in I_N}\E_{i\in I_k}\Big|\int_{0}^{1} \label{eq17} g(S_{2}^{n+t}S_{1}^{i}x)dt- \int_{0}^{1}g(S_{2}^{n}S_{1}^{t+i}x)dt \Big|
		 \overset{(\ref{eq14})} \le  2\epsilon. \label{eq17}
\end{align}

By Birkhoff's ergodic theorem and Theorem \ref{thm7}, there exists $k_0\in\N$ such that
\begin{equation}\label{eq22}
\norm{\sup_{N\ge 1}\E_{n\in I_N} \Big|\int_{0}^{1}(\frac{1}{k_0}\sum_{i=0}^{k_{0}-1} S_{1}^{i}g)(S_{1}^{t}S_{2}^{n}x)dt-\E_{\mu} (F|\mathcal{I}(S_1))(S_{2}^{n}x)\Big|}_{2}< \epsilon.
\end{equation}
By (\ref{eq22}), we have
\begin{align}
&\int_X \limsup_{N\to\infty}\Big|\E_{n\in I_N}f_{1}(\tilde{T_1}^{n^{\gamma_1}}x)\cdots f_{d}(\tilde{T_d}^{n^{\gamma_d}}x)\notag\\ & \hspace{1cm} \cdot\Big( \E_{\mu}(F|\mathcal{I}(S_1))(S_{2}^{n}x)-  \int_{0}^{1}(\frac{1}{k_0}\sum_{i=0}^{k_{0}-1} S_{1}^{i}g)(S_{1}^{t}S_{2}^{n}x)dt\Big)\Big|d\mu(x)
%\notag\\ \overset{(\textbf{S3})}\le &
%		A\int_X \limsup_{N\to\infty}\E_{n\in I_N}\Big|\int_{0}^{1}(\frac{1}{k_0} \sum_{i=0}^{k_{0}-1}S_{1}^{i}g)(S_{1}^{t}S_{2}^{n}x)dt- \E_{\mu}(F|\mathcal{I}(S_1))(S_{2}^{n}x)\Big|d\mu(x)\notag
%\\ \le &
%		A\int_X  \sup_{N\ge 1}\E_{n\in I_N}\Big|\int_{0}^{1} (\frac{1}{k_0}\sum_{i=0}^{k_{0}-1}S_{1}^{i}g) (S_{1}^{t}S_{2}^{n}x)dt-\E_{\mu}(F|\mathcal{I}(S_1))(S_{2}^{n}x)\Big|d\mu(x)\label{eq21}
%		\\ \overset{(\textbf{S3}),(\ref{eq22})}\le &
%		A\norm{\sup_{N\ge 1}\E_{n\in I_N}\Big|\int_{0}^{1}(\frac{1}{k_0}\sum_{i=0}^{k_{0}-1}S_{1}^{i}g)(S_{1}^{t}S_{2}^{n}x)dt-\E_{\mu}(F|\mathcal{I}(S_1))(S_{2}^{n}x)\Big|}_{2}\notag
		\overset{(\textbf{S3}),(\ref{eq22})}\le
		\epsilon. \label{eq21}
\end{align}

By (\ref{eq18}), (\ref{eq15}), (\ref{eq21}) and (\ref{eq17}), we have
\begin{align*}
& \int_X \limsup_{K\to\infty}\sup_{M_1\ge K,\atop M_2\ge K}\Big|\frac{1}{M_1}\int_{0}^{M_1}f_{1} (T_{1}^{t^{\gamma_1}}x) \cdots f_{d}(T_{d}^{t^{\gamma_d}}x) g(S^{Q(t)\vec{e}_1}S^{t\vec{e}_2}x)dt \\ & \hspace{2cm}- \frac{1}{M_2}\int_{0}^{M_2}f_{1} (T_{1}^{t^{\gamma_1}}x)\cdots f_{d}(T_{d}^{t^{\gamma_d}}x) g(S^{Q(t)\vec{e}_1}S^{t\vec{e}_2}x)dt\Big|d\mu(x)
\\\le & \int_X \limsup_{K\to\infty}\sup_{M_1\ge K,\atop M_2\ge K}\Big|\frac{1}{\lfloor M_1/\delta\rfloor\delta} \int_{0}^{\lfloor M_1/\delta\rfloor\delta}f_{1}(T_{1}^{t^{\gamma_1}}x)\cdots f_{d}(T_{d}^{t^{\gamma_d}}x) g(S^{Q(t)\vec{e}_1}S^{t\vec{e}_2}x)dt \\ & \hspace{2cm} - \frac{1}{\lfloor M_2/\delta\rfloor\delta}\int_{0}^{\lfloor M_2/\delta\rfloor\delta}f_{1}(T_{1}^{t^{\gamma_1}}x)\cdots f_{d}(T_{d}^{t^{\gamma_d}}x) g(S^{Q(t)\vec{e}_1}S^{t\vec{e}_2}x)dt\Big|d\mu(x)
\\ \overset{{(\ref{eq18}), (\ref{eq15})}, \atop (\ref{eq21}), (\ref{eq17})} \le &
		\int_X \limsup_{K\to\infty}\sup_{M_1\ge K,\atop M_2\ge K}\Big|\E_{n\in I_{\lfloor M_1/\delta\rfloor}}f_{1} (\tilde{T_1}^{n^{\gamma_1}}x)\cdots f_{d} (\tilde{T_d}^{n^{\gamma_d}}x) \int_{0}^{1}(\frac{1}{k_0}\sum_{i=0}^{k_{0}-1}S_{1}^{i}g)(S_{2}^{n+t}x)dt \\ & \hspace{0.5cm} - \E_{n\in I_{\lfloor M_2/\delta\rfloor}}f_{1}(\tilde{T_1}^{n^{\gamma_1}}x)\cdots f_{d}(\tilde{T_d}^{n^{\gamma_d}}x)\int_{0}^{1}(\frac{1}{k_0}\sum_{i=0}^{k_{0}-1}S_{1}^{i}g)(S_{2}^{n+t}x)dt\Big|d\mu(x)+24d\epsilon.
\end{align*}

Thus, to verify that the limit on the left-hand side of \eqref{eq33} exists almost everywhere, it suffices to show that for any $k\in \N$, the limit
\begin{equation}\label{eq30}
\lim_{N\to\infty}\E_{n\in I_N}f_{1}(\tilde{T_1}^{n^{\gamma_1}}x)\cdots f_{d}(\tilde{T_d}^{n^{\gamma_d}}x) \int_{0}^{1}(\frac{1}{k}\sum_{i=0}^{k-1}S_{1}^{i}g)(S_{2}^{n+t}x)dt
\end{equation}
exists for $\mu$-a.e. $x\in X$.

\noindent\textbf{Step II. Verify that the limit in \eqref{eq30} exists almost everywhere.}

By Theorem \ref{thm4}, there exists a full measure subset $X_0$ of $X_{S}\cap X_1\cap \cdots \cap X_d$ such that for any $x\in X_0$, $k\in\N$, the limit
\begin{equation}\label{eq60}
	\lim_{M\to\infty}\frac{1}{M}\int_{0}^{M}f_{1}(\tilde{T_1}^{t^{\gamma_1}}x)\cdots f_{d}(\tilde{T_d}^{t^{\gamma_d}}x)(\frac{1}{k}\sum_{i=0}^{k-1}S_{1}^{i}g)(S_{2}^{t}x)dt
\end{equation}
exists.

 Choose $\tau>0$ arbitrarily. As mentioned before, by Mean Value Theorem, there exists $N_1\in\N$ such that for any $n\ge N_1$, any $t\in [0,1]$ and any $1\le j\le d$, $(n+t)^{\gamma_j}-n^{\gamma_j}<\eta(\tau)/\delta^{\gamma_{j}}$.

 Then for any $x\in X_0$ and any $i\in \N\cup \{0\}$, we have
\begin{align}
& \limsup_{M\to\infty}\Big| \frac{1}{M}\int_{0}^{M}f_{1}(\tilde{T_1}^{t^{\gamma_1}}x)\cdots f_{d}(\tilde{T_d}^{t^{\gamma_d}}x)(S_{1}^{i}g)(S_{2}^{t}x)dt\notag \\ & \hspace{3cm} - \E_{n\in I_{\lfloor M \rfloor}}f_{1}(\tilde{T_1}^{n^{\gamma_1}}x)\cdots f_{d}(\tilde{T_d}^{n^{\gamma_d}}x)\int_{0}^{1}(S_{1}^{i}g)(S_{2}^{n+t}x)dt\Big| \notag
			\\ \le & \limsup_{M\to\infty}\Big| \E_{n\in I_{\lfloor M \rfloor}}\int_{0}^{1} f_{1}(\tilde{T_1}^{(n+t)^{\gamma_1}}x)\cdots f_{d}(\tilde{T_d}^{(n+t)^{\gamma_d}}x)(S_{1}^{i}g)(S_{2}^{n+t}x)dt\label{eq82}\\ & \hspace{3cm} - \E_{n\in I_{\lfloor M \rfloor}}f_{1}(\tilde{T_1}^{n^{\gamma_1}}x)\cdots f_{d}(\tilde{T_d}^{n^{\gamma_d}}x)\int_{0}^{1}(S_{1}^{i}g)(S_{2}^{n+t}x)dt\Big|\notag
%			\\ \le & \norm{g}_{\infty}\Big(\limsup_{M\to\infty}\frac{2N_1}{\lfloor M \rfloor}A  +\sum_{j=1}^{d} G_j\limsup_{M\to\infty}\E_{n\in I_{\lfloor M \rfloor}\backslash I_{N_1}} \int_{0}^{1}|f_{j}(\tilde{T_j}^{(n+t)^{\gamma_j}}x)-f_{j}(\tilde{T_j}^{n^{\gamma_j}}x)|dt\Big)\notag
			\overset{(\textbf{S3}), (\ref{eq14})}\le
			d\tau.\notag
\end{align}

Note that $\tau$ is arbitrary and the limit in \eqref{eq60} exists for any $x\in X_0$. This means that for any $k\in \N$, the limit in
\eqref{eq30}
exists for $\mu$-a.e. $x\in X_0$. Therefore, the limit on the left-hand side of \eqref{eq33} exists almost everywhere.

\noindent \textbf{Step III. Verify that the equality in \eqref{eq33} holds almost everywhere.}

Note that by \eqref{eq60} and \eqref{eq82}, the limit in \eqref{eq30} exists almost everywhere and the related limit function is equal to one of \eqref{eq60} almost everywhere.

Let $f_1\in\mathcal{A}((T_{1}^t)_{t\in \R}), \ldots, f_d\in\mathcal{A}((T_{d}^t)_{t\in \R})$, and  $g\in\mathcal{A}((S^{\bf t})_{{\bf t}\in \R^2})$ such that $\norm{f_i}_{\infty}\le 1,1\le i\le d$ and $\norm{g}_{\infty}\le 1$. Similar to the establishment of \eqref{eq65}, by \eqref{eq18}, \eqref{eq15}, \eqref{eq21}, \eqref{eq17}, and Theorem \ref{thm4}, we have that for any $k\in \N$, there exist $\delta_k>0$ and $s_k\in\N$ such that
\begin{equation}\label{eq83}
	\begin{split}
	&\int_{X}\limsup_{M\to\infty}\Big|\frac{1}{M}\int_{0}^{M} f_{1}(T_{1}^{t^{\gamma_1}}x)\cdots f_{d}(T_{d}^{t^{\gamma_d}}x)g(S^{Q(t)\vec{e}_1}S^{t\vec{e}_2}x)dt
	\\ & \hspace{0.5cm} -
\E_{\mu}(\E_{i\in I_{s_k}}S_{1,k}^{i}g|\mathcal{I}((S_{2,k}^{t})_{t\in\R}))(x)\prod_{j=1}^{d}\E_{\mu}(f_{j}|\mathcal{I}((T_{j,k}^{t})_{t\in\R}))(x)
	\Big|d\mu(x)<\frac{1}{8^k},
	\end{split}
\end{equation}
where $T_{1,k}=T_{1}^{\delta_{k}^{\gamma_1}},\ldots,T_{d,k}=T_{d}^{\delta_{k}^{\gamma_d}},S_{1,k}=S^{\delta_{k}^{s}\vec{e}_1},S_{2,k}=S^{\delta_{k}\vec{e}_2}$. And we require that the following hold:
\begin{itemize}
	\item $\lim_{k\to\infty}\delta_{k}=0,\lim_{k\to\infty}s_{k}=\infty$.
	\item For any $k\in\N$, $\delta_k>\delta_{k+1},\delta_{k}/\delta_{k+1}\in\N,s_{k+1}>s_{k}$.
	\item For any $k\in\N$, we have \begin{equation}\label{eq84}
		\norm{\E_{i\in I_{s_k}}S_{1,k}^{i}g-\E_{\mu}(g|\mathcal{I}(S_{1,k}))}_{2}<\frac{1}{8^k(\norm{f_1}_{\infty}\cdots \norm{f_d}_{\infty} +1)}.
	\end{equation}
\end{itemize}
By \eqref{eq83} and \eqref{eq84}, for any $k\in\N$,
\begin{equation}\label{eq85}
	\begin{split}
	&\int_{X}\limsup_{M\to\infty}\Big|\frac{1}{M}\int_{0}^{M} f_{1}(T_{1}^{t^{\gamma_1}}x)\cdots f_{d}(T_{d}^{t^{\gamma_d}}x)g(S^{Q(t)\vec{e}_1}S^{t\vec{e}_2}x)dt
	\\ & \hspace{1cm} -
\E_{\mu}(\E_{\mu}(g|\mathcal{I}(S_{1,k}))|\mathcal{I}((S_{2,k}^{t})_{t\in\R}))(x)\prod_{j=1}^{d}\E_{\mu}(f_{j}|\mathcal{I}((T_{j,k}^{t})_{t\in\R}))(x)
	\Big|d\mu(x)<\frac{1}{4^k}.
	\end{split}
\end{equation}
Note that for any $k\in \N$, $\mathcal{I}((S_{2,k}^{t})_{t\in\R})=\mathcal{I}((S^{t\vec{e}_{2}})_{t\in\R})$ and $\mathcal{I}((T_{j,k}^{t})_{t\in\R})=\mathcal{I}((T_{j}^{t})_{t\in\R})$ for each $j\in\{1,\ldots,d\}$. Then for any $k\in\N$, we can rewrite \eqref{eq85} as
\begin{equation}\label{eq86}
\begin{split}
&\int_{X}\limsup_{M\to\infty}\Big|\frac{1}{M}\int_{0}^{M} f_{1}(T_{1}^{t^{\gamma_1}}x)\cdots f_{d}(T_{d}^{t^{\gamma_d}}x)g(S^{Q(t)\vec{e}_1}S^{t\vec{e}_2}x)dt
\\ & \hspace{1cm} -
\E_{\mu}(\E_{\mu}(g|\mathcal{I}(S_{1,k}))|\mathcal{I}((S^{t\vec{e}_{2}})_{t\in\R}))(x)\prod_{j=1}^{d}\E_{\mu}(f_{j}|\mathcal{I}((T_{j}^{t})_{t\in\R}))(x)
\Big|d\mu(x)<\frac{1}{4^k}.
\end{split}
\end{equation}

Similar to the proof of \eqref{eq75}, by Martingle Theorem, we have that for $\mu$-a.e. $x\in X$,
\begin{equation}\label{eq87}
	\lim_{k\to\infty}\E_{\mu}(g|\mathcal{I}(S_{1,k}))(x)=\E_{\mu}(g|\mathcal{I}((S^{t\vec{e}_{1}})_{t\in\R}))(x).
\end{equation}

Therefore, by \eqref{eq86} and \eqref{eq87}, the equality in \eqref{eq33} holds almost everywhere.
The whole proof is complete.
\hfill $\square$

\subsection{Proof of $(2)$ of Theorem D}

Without loss of generality, we can suppose that $c\neq 0$. By Lemma \ref{lem1}, to prove $(2)$ of Theorem D, it suffices to show that for any $f,g\in L^{\infty}(\mu)$ and $\mu$-a.e. $x\in X$, we have
\begin{equation}\label{eq88}
\begin{split}
&\lim\limits_{M\to\infty}\frac{1}{M} \int_{0}^{M}f(S^{ct\vec{e}_2}x)g(S^{Q(t)\vec{e}_1}S^{t\vec{e}_2}x)dt\\ & \hspace{4cm}=\lim\limits_{M\to\infty}\frac{1}{M} \int_{0}^{M}f(S^{ct\vec{e}_2}x)\E_{\mu}(g|\mathcal{I}((S^{t\vec{e}_1})_{t\in\R}))(S^{t\vec{e}_2}x)dt.
\end{split}
\end{equation}
By Theorem \ref{thm2}, to verify that the equality in \eqref{eq88} holds almost everywhere, it suffices to prove that for any $f,g\in \mathcal{A}((S^{\bf t})_{{\bf t}\in \R^2})$ and $\mu$-a.e. $x\in X$, we have
\begin{equation}\label{eq89}
\begin{split}
&\lim\limits_{M\to\infty}\frac{1}{M} \int_{0}^{M}f(S^{ct\vec{e}_2}x)g(S^{Q(t)\vec{e}_1}S^{t\vec{e}_2}x)dt\\ & \hspace{4cm}=\lim\limits_{M\to\infty}\frac{1}{M} \int_{0}^{M}f(S^{ct\vec{e}_2}x)\E_{\mu}(g|\mathcal{I}((S^{t\vec{e}_1})_{t\in\R}))(S^{t\vec{e}_2}x)dt.
\end{split}
\end{equation}

The rest of the proof is divided into two steps.

\noindent \textbf{Step I. Verify that the limit on the left-hand side of \eqref{eq89} exists almost everywhere.}

Let $f,g\in\mathcal{A}((S^{\bf t})_{{\bf t}\in \R^2})$ such that $\norm{f}_{\infty}\le 1$ and $\norm{g}_{\infty}\le 1$. By (\textbf{S4}), for any $\theta>0$, there exists $\eta(\theta)\in (0,1)$ such that for any $x_S\in X_S$ and any ${\bf t}\in \R^2$ with $|{\bf t}|<\eta(\theta)$, we have
\begin{equation}\label{eq27}
|f(S^{\bf t}x_S)-f(x_S)|<\theta,\quad |g(S^{\bf t}x_S)-g(x_S)|<\theta.
\end{equation}

Choose $\ep>0$ arbitrarily. Let $\d>0$ such that $0<\delta<\eta(\ep)/4$, $|c\delta|<\eta(\ep)$, and $Q([0,\delta])\subset (-\eta(\ep)/2,\eta(\ep)/2)$.
Here, for the polynomial $Q$, we still use the notation mentioned in \eqref{eq57}.
%Since $Q\in \R[t]$ with $\deg Q=s\ge 2,Q(0)=0$ and leading coefficient of $Q(t)$ is $1$, for any $n\in\Z,t\in\R$, we have
%\begin{equation}\label{eq61}
%Q(n\delta+t)=Q(t)+P(n,t/\delta)\delta^{s},
%\end{equation}
%where  $P(n,t)=\tilde{P}(n)+\sum_{i=1}^{s-1}\tilde{P}_{i}(n)t^{i}$, $\tilde{P}(n), \tilde{P}_1,\ldots, \tilde{P}_{s-1}\in \R[n],\tilde{P}(0)=0$, leading coefficient of $\tilde{P}_{1}(n)$ is $s$ and $\deg \tilde{P}_{i}(n)=s-i$ for each $i\in \{1,\ldots,s-1\}$.

Let $S_1=S^{\delta^s\vec{e}_1},S_2=S^{\delta\vec{e}_2}$. Now, we need the following ergodic averages for an approximation argument:
\begin{equation}\label{eq62}
\E_{n\in I_N}f(S_{2}^{cn}x) \int_{0}^{1}g(S_{1}^{P(n,t)}S_{2}^{n}x)dt.
\end{equation}

For any $x\in X_S$ and any $N\in \N$, we have
\begin{align}
& \frac{1}{N\delta} \int_{0}^{N\delta} f(S^{ct\vec{e}_2}x)g(S^{Q(t)\vec{e}_1}S^{t\vec{e}_2}x)dt\notag
% 	\\ = &\E_{n\in I_N} \frac{1}{\delta} \int_{0}^{\delta}f(S^{c(n\delta+t)\vec{e}_2}x) g(S^{Q(n\delta+t)\vec{e}_1}S^{(n\delta+t)\vec{e}_2}x)dt\label{eq63}
 	\\ \overset{\eqref{eq57}}= & \E_{n\in I_N}\frac{1}{\delta}\int_{0}^{\delta}f(S^{c(n\delta+t) \vec{e}_2}x)g(S^{(P(n,t/\delta) \delta^{s}+Q(t))\vec{e}_1}S^{(n\delta+t)\vec{e}_2}x)dt.\label{eq63}
\end{align}

By \eqref{eq63}, the definitions of $S_1,S_2$ and the setting of $\delta$, for any $x\in X_S$ and any $N\in\N$, we have
\begin{align}
		& \Big|\frac{1}{N\delta}\int_{0}^{N\delta} f(S^{ct\vec{e}_2}x)g(S^{Q(t)\vec{e}_1}S^{t\vec{e}_2}x)dt
		-
		\E_{n\in I_N}f(S_{2}^{cn}x) \int_{0}^{1}g(S_{1}^{P(n,t)}S_{2}^{n}x)dt
		\Big| \notag
		\\ \overset{\eqref{eq63}} = &
		\Big| \E_{n\in I_N}\frac{1}{\delta}\int_{0}^{\delta}f(S^{c(n\delta+t) \vec{e}_2}x)g(S^{(P(n,t/\delta) \delta^{s}+Q(t))\vec{e}_1}S^{(n\delta+t)\vec{e}_2}x)dt \label{eq64}
		\\&\hspace{1.0cm} -
		\E_{n\in I_N}\frac{1}{\delta} \int_{0}^{\delta}f(S^{cn\delta\vec{e}_2}x)g(S^{P(n,t/\delta)\delta^{s}\vec{e}_1}S^{n\delta\vec{e}_2}x)dt 	\Big|
%		\\ \overset{(\textbf{S4})}\le &
%		\norm{f}_{\infty}\E_{n\in I_N}\frac{1}{\delta} \int_{0}^{\delta}|g(S^{(P(n,t/\delta)\delta^{s}+Q(t))\vec{e}_1}S^{(n\delta+t)\vec{e}_2}x)- g(S^{P(n,t/\delta)\delta^{s}\vec{e}_1}S^{n\delta\vec{e}_2}x)|dt\notag \\&\hspace{4.0cm}+
%		\norm{g}_{\infty}\E_{n\in I_N}\frac{1}{\delta} \int_{0}^{\delta}|f(S^{c(n\delta+t)\vec{e}_2}x)- f(S^{cn\delta\vec{e}_2}x)|dt\notag
		\overset{(\textbf{S3}), (\ref{eq27})}\le
		2\epsilon.\notag
\end{align}

For any $x\in X_S$, let $\displaystyle F(x)=\int_{0}^{1}g(S_{1}^{t}x)dt$. Similar to the establishment of \eqref{eq15}, we have that for $\mu$-a.e. $x\in X$,
\begin{align}
\limsup\limits_{N\to\infty}\Big|\E_{n\in I_N}f(S_{2}^{cn}x)\int_{0}^{1} g(S_{1}^{P(n,t)}S_{2}^{n}x)dt-\E_{n\in I_N}f(S_{2}^{cn}x)\E_{\mu}(F|\mathcal{I}(S_1))(S_{1}^{\lfloor \tilde{P}(n)\rfloor}S_{2}^{n}x)\Big|\le \ep. \label{eq90}
\end{align}

%Similar to establishment of \eqref{eq102}, by  Birkhoff's ergodic theorem, Theorem \ref{thm7}, \eqref{eq27}, Theorem \ref{TE} and $\mathbb{E}(F|\mathcal{I}(S_1))(x)=\mathbb{E}(F|\mathcal{I}(S_1))(S_{1}x)$ almost everywhere, we have
%\begin{align}
%	&\int_{X}\limsup_{N\to\infty}\Big|\E_{n\in I_N}f(S_{2}^{pn}x)\Big(\big(S_{1}^{\tilde{P}(n)}-Id_{X}\big)\E_{\mu}(F|\mathcal{I}(S_1))\Big)(S_{2}^{qn}x)\Big|d\mu(x)<3\ep\norm{f}_{\infty}. \label{eq103}
%\end{align}
Note that for $\mu$-a.e. $x\in X_S$ and any $N\in \N$,
\begin{equation}\label{eqk}
\left|\E_{n\in I_N}f(S_{2}^{cn}x)\E_{\mu}(F|\mathcal{I}(S_1))(S_{2}^{n}x)-\E_{n\in I_N}f(S_{2}^{\lfloor cn\rfloor}x)\E_{\mu}(F|\mathcal{I}(S_1))(S_{2}^{n}x) \right|\le \ep.
\end{equation}

Based on \eqref{eq64}, \eqref{eq90} and \eqref{eqk}, we perform a calculation analogous to the \textbf{Reduction Calculation} in the proof of Theorem B. This allows us to conclude that in order to establish the existence of the limit on the left-hand side of \eqref{eq89} for almost every point, it suffices to show that the limit
 \begin{equation}\label{eq31}
 	\lim\limits_{N\to\infty}\E_{n\in I_N}f(S_{2}^{\lfloor cn\rfloor}x)\E_{\mu}(F|\mathcal{I}(S_1))(S_{2}^{n}x)
 \end{equation} exists for $\mu$-a.e. $x\in X_S$.

By Theorem \ref{thm-K}, the limit in (\ref{eq31}) exists almost everywhere. Therefore, the limit on the left-hand side of \eqref{eq89} exists almost everywhere.

\noindent \textbf{Step II. Verify that the equality in \eqref{eq89} holds almost everywhere.}

Let $f,g\in\mathcal{A}((S^{\bf t})_{{\bf t}\in \R^2})$ such that $\norm{f}_{\infty}\le 1$ and $\norm{g}_{\infty}\le 1$. Similar to the establishment of \eqref{eq65}, by \eqref{eq64}, \eqref{eq90}, \eqref{eqk} and Theorem \ref{thm-K}, for any $k\in\N$, we can find $\delta_k>0$ such that
\begin{equation}\label{eq91}
	\begin{split}
	&\int_{X}\limsup_{M\to\infty}\Big|\frac{1}{M} \int_{0}^{M}f(S^{ct\vec{e}_2}x)g(S^{Q(t)\vec{e}_1}S^{t\vec{e}_2}x)dt \\ & \hspace{3cm}-
	\E_{n\in I_{\lfloor M\rfloor}}f(S_{2,k}^{cn}x)\E_{\mu}(F_{k}|\mathcal{I}(S_{1,k}))(S_{2,k}^{n}x)
	\Big|d\mu(x)<\frac{1}{8^k},
	\end{split}
\end{equation}
where $S_{1,k}=S^{\delta_{k}^{s}\vec{e}_1},S_{2,k}=S^{\delta_{k}\vec{e}_2}$, and $\displaystyle F_{k}(x)=\int_{0}^{1}g(S_{1,k}^{t}x)dt$ for any $x\in X_S$. And we require that the following hold:
\begin{itemize}
	\item $\lim_{k\to\infty}\delta_k=0$.
	\item For any $k\in\N$, $\delta_k>\delta_{k+1}$ and $\delta_k/\delta_{k+1}\in \N$.
	\item For any $k\in \N$ and any $x\in X_0\cap X_1$,
	$|F_{k}(x)-F_{k+1}(x)|<\frac{1}{4^{k}}$.
	\item For any $k\in\N$, any $x\in X_S$ and any $t\in [0,1]$, we have
	\begin{equation}\label{eq92}
		|f(S_{2,k}^{ct}x)-f(x)|<\frac{1}{8^{k}(\norm{g}_{\infty}+1)},\quad |g(S_{2,k}^{t}x)-g(x)|<\frac{1}{8^{k}(\norm{f}_{\infty}+1)}.
	\end{equation}
\end{itemize}

For any $k\in\N$, by Birkhoff's ergodic theorem, Theorem \ref{thm7} and Theorem \ref{thm2}, there exists $s_k\in\N$ such that the following two inequalities hold:
\begin{equation}\label{eq93}
	\norm{\sup_{n\ge 1}\E_{n\in I_{N}}\Big|\E_{i\in I_{s_k}}F_{k}(S_{1,k}^{i}S_{2,k}^{n}x)-\E_{\mu}(F_{k}|\mathcal{I}(S_{1,k}))(S_{2,k}^{n}x)\Big|}_{2}<\frac{1}{8^{k}(\norm{f}_{\infty}+1)};
\end{equation}
\begin{equation}\label{eq94}
	\norm{\sup_{M\in \R_{+}}\frac{1}{M}\int_{0}^{M}\Big|\E_{i\in I_{s_k}}S_{1,k}^{i}F_{k}-\E_{\mu}(F_{k}|\mathcal{I}(S_{1,k}))\Big|(S^{t\vec{e}_2}x)dt}_{2}<\frac{1}{8^{k}(\norm{f}_{\infty}+1)}.
\end{equation}

By \eqref{eq92}, for any $k\in \N$, we have
\begin{align}
	& \int_{X}\limsup_{M\to\infty}\Big|\E_{n\in I_{\lfloor M\rfloor}}f(S_{2,k}^{cn}x)\E_{i\in I_{s_k}}F_{k}(S_{1,k}^{i}S_{2,k}^{n}x) \notag
	\\ & \hspace{1.5cm} -
	\frac{1}{M}\int_{0}^{M}f(S_{2,k}^{ct}x)\E_{i\in I_{s_k}}F_{k}(S_{1,k}^{i}S_{2,k}^{t}x)dt
	\Big|d\mu(x)
%	\\ \overset{\eqref{req}}\le &
%	\int_{X}\limsup_{M\to\infty}\Big|\E_{n\in I_{\lfloor M\rfloor}}f(S_{2,k}^{pn}x)\E_{i\in I_{s_k}}F_{k}(S_{1,k}^{i}S_{2,k}^{qn}x) \label{eq95}
%	\\ & \hspace{3.5cm} -
%	\E_{n\in I_{\lfloor M\rfloor}}\int_{0}^{1}f(S_{2,k}^{p(n+t)}x)\E_{i\in I_{s_k}}F_{k}(S_{1,k}^{i}S_{2,k}^{q(n+t)}x)dt\Big|d\mu(x) \notag
	\overset{(\textbf{S3}),\eqref{eq92}}\le
	\frac{2}{8^k}. \label{eq95}
\end{align}
%Note that for any $k\in \N$ and $\mu$-a.e. $x\in X$,
%\begin{align}
%	&\lim_{M\to\infty}\frac{1}{M}\int_{0}^{M}f(S_{2,k}^{pt}x)\E_{i\in I_{s_k}}F_{k}(S_{1,k}^{i}S_{2,k}^{qt}x)dt\notag \\ & \hspace{4.0cm}= \lim_{M\to\infty}\frac{1}{M}\int_{0}^{M}f(S^{ct\vec{e}_2}x)\E_{i\in I_{s_k}}F_{k}(S_{1,k}^{i}S^{t\vec{e}_2}x)dt. \label{eq96}
%\end{align}
By \eqref{eq91}, \eqref{eq93}, \eqref{eq95}, and \eqref{eq94}, for any $k\in\N$, we have
\begin{align}
	&\int_{X}\limsup_{M\to\infty}\Big|\frac{1}{M} \int_{0}^{M}f(S^{ct\vec{e}_2}x)g(S^{Q(t)\vec{e}_1}S^{t\vec{e}_2}x)dt \notag\\ & \hspace{3cm}-
	\frac{1}{M}\int_{0}^{M}f(S^{ct\vec{e}_2}x)\E_{\mu}(F_{k}|\mathcal{I}(S_{1,k}))(S^{t\vec{e}_2}x)dt \notag
	\\ \overset{\eqref{eq91}}\le &
	\int_{X}\limsup_{M\to\infty}\Big|\E_{n\in I_{\lfloor M\rfloor}}f(S_{2,k}^{cn}x)\E_{\mu}(F_{k}|\mathcal{I}(S_{1,k}))(S_{2,k}^{n}x) \notag\\ & \hspace{3cm}-
	\frac{1}{M}\int_{0}^{M}f(S^{ct\vec{e}_2}x)\E_{\mu}(F_{k}|\mathcal{I}(S_{1,k}))(S^{t\vec{e}_2}x)dt
	\Big|d\mu(x)+\frac{1}{8^k}\notag
	\\ \overset{\eqref{eq93}}\le &
	\int_{X}\limsup_{M\to\infty}\Big|\E_{n\in I_{\lfloor M\rfloor}}f(S_{2,k}^{cn}x)\E_{i\in I_{s_k}}F_{k}(S_{1,k}^{i}S_{2,k}^{n}x) \label{eq97}\\ & \hspace{3cm}-
	\frac{1}{M}\int_{0}^{M}f(S^{ct\vec{e}_2}x)\E_{\mu}(F_{k}|\mathcal{I}(S_{1,k}))(S^{t\vec{e}_2}x)dt
	\Big|d\mu(x)+\frac{2}{8^k}\notag
		\\ \overset{\eqref{eq95}}\le &
	\int_{X}\limsup_{M\to\infty}\Big|	\frac{1}{M}\int_{0}^{M}f(S_{2,k}^{ct}x)\E_{i\in I_{s_k}}F_{k}(S_{1,k}^{i}S_{2,k}^{t}x)dt \notag\\ & \hspace{3cm}-
	\frac{1}{M}\int_{0}^{M}f(S^{ct\vec{e}_2}x)\E_{\mu}(F_{k}|\mathcal{I}(S_{1,k}))(S^{t\vec{e}_2}x)dt
	\Big|d\mu(x)+\frac{4}{8^k}\notag
		\\ \overset{\text{Theorem \ref{thm6}}}\le &
	\int_{X}\limsup_{M\to\infty}\Big|	\frac{1}{M}\int_{0}^{M}f(S^{ct\vec{e}_2}x)\E_{i\in I_{s_k}}F_{k}(S_{1,k}^{i}S^{t\vec{e}_2}x)dt \notag\\ & \hspace{3cm}-
	\frac{1}{M}\int_{0}^{M}f(S^{ct\vec{e}_2}x)\E_{\mu}(F_{k}|\mathcal{I}(S_{1,k}))(S^{t\vec{e}_2}x)dt
	\Big|d\mu(x)+\frac{4}{8^k}\notag
		\\ \overset{\eqref{eq94}}\le &
	\frac{5}{8^k}\notag	.
\end{align}

Similar to the proof of \eqref{eq76}, by Martingle Theorem, we have that for $\mu$-a.e. $x\in X$,
\begin{equation}\label{eq98}
	\lim_{k\to\infty}\E_{\mu}(F_{k}|\mathcal{I}(S_{1,k}))(x)=\E_{\mu}(g|\mathcal{I}((S^{t\vec{e}_1})_{t\in\R}))(x).
\end{equation}
By \eqref{eq97}, \eqref{eq98} and Theorem \ref{thm2}, we have
\begin{equation*}
\begin{split}
&\int_{X}\limsup_{M\to\infty}\Big|\frac{1}{M} \int_{0}^{M}f(S^{ct\vec{e}_2}x)g(S^{Q(t)\vec{e}_1}S^{t\vec{e}_2}x)dt \\ & \hspace{3cm}-
\frac{1}{M}\int_{0}^{M}f(S^{ct\vec{e}_2}x)\E_{\mu}(g|\mathcal{I}((S^{t\vec{e}_1})_{t\in\R}))(S^{t\vec{e}_2}x)dt
\Big|d\mu(x)=0
\end{split}
\end{equation*}

Therefore, the equality in \eqref{eq89} holds almost everywhere. The whole proof is complete.
\hfill $\square$

\section{A question}\label{section-ques}
%
%\subsection{Ergodic averages along some fractional polynomials}
%
%Let $(X,\X,\mu,(T^t)_{t\in\R})$ be a measurable flow. In fact, the proof of Theorem B tells us the following fact: There exists a dense subset $\mathcal{A}((T^t)_{t\in \R})$ of $L^{2}(\mu)$ such that for any $f,g\in \mathcal{A}((T^t)_{t\in \R})$, any $p,q\in \Z\backslash \{0\}$ and any $\alpha\in (0,1)$, the limit $$\lim_{N\to\infty}\E_{n\in I_N}f(T^{pn^{\alpha}}x)g(T^{qn^{\alpha}}x)$$ exists almost everywhere. Based on this, we ask the following question naturally.
% \begin{ques}
% 	Let $(X,\X,\mu,(T^t)_{t\in\R})$ be a measurable flow. Let $p,q\in \Z\backslash \{0\}$ and $\alpha\in (0,1)$. Is it true that for any $f,g\in L^{2}(\mu)$, the limit $$\lim_{N\to\infty}\E_{n\in I_N}f(T^{pn^{\alpha}}x)g(T^{qn^{\alpha}}x)$$ exists almost everywhere?
% \end{ques}
%\subsection{Further direction of Theorem B}
Based on the proof of Theorem B, we know that a key reason why the commutativity of measurable flows can not influence the pointwise existence of (\ref{TA1}) is the degree of $Q$. From this point of view, the following question arises spontaneously.
\begin{ques}
Let $(X,\X,\mu, (T^{t})_{t\in \R})$ and $(X,\X,\mu, (S^{t})_{t\in \R})$ be two measurable flows. Let $k_1,k_2\in \N$. Let $P_1,\ldots,P_{k_1},Q_1, \ldots,Q_{k_2}\in \R[t]$ with $$\max\{\deg P_1,\ldots,\deg P_{k_1}\}<\min \{\deg Q_1, \ldots, \deg Q_{k_2}\}.$$ Is it true that for any $f_1,\ldots,f_{k_1},g_1,\ldots,g_{k_2}\in L^{\infty}(\mu)$, the limit
\begin{equation}\label{eq5A}
	\lim\limits_{M\to\infty}\frac{1}{M}\int_{0}^{M} \prod_{i=1}^{k_1}f_{i}(T^{P_{i}(t)}x) \prod_{j=1}^{k_2}g_{j}(S^{Q_{j}(t)}x)dt
\end{equation}exists in $L^{2}(\mu)$ and almost everywhere?
\end{ques}
Austin \cite{A} proved that  when $(T^{t})_{t\in \R}$ and $(S^{t})_{t\in \R}$ are commuting $\R$-actions, the limit in \eqref{eq5A} exists in $L^{2}(\mu)$. But if there is no any commuting restriction for $\R$-actions $(T^{t})_{t\in \R}$ and $(S^{t})_{t\in \R}$, we do not know whether the limit in \eqref{eq5A} exists in $L^{2}(\mu)$.

\appendix
\section{An ergodic theorem along polynomials with real coefficients}\label{Ap1}
\begin{thm}
	Given $d\in\N$, let $T_1,\ldots,T_d:X\rightarrow X$ be invertible, commuting  measure preserving transformations acting on Lebesgue probability space $(X,\X,\mu)$ and $P_1,\ldots,P_d\in \R[n]$. Then there exists a constant $C$, depending on $d,P_1,\ldots,P_d$, such that  for any $f\in L^{2}(\mu)$,
	\begin{equation}\label{ETEA}
		\norm{\sup_{N\ge 1}\Big|\E_{n\in I_N}f(T_{1}^{\lfloor P_{1}(n)\rfloor}\cdots T_{d}^{\lfloor P_{d}(n)\rfloor}x)\Big|}_{2}\le C\norm{f}_{2}.
	\end{equation}
And the limit
\begin{equation}\label{ETEB}
	\lim_{N\to\infty}\E_{n\in I_N}f(T_{1}^{\lfloor P_{1}(n)\rfloor}\cdots T_{d}^{\lfloor P_{d}(n)\rfloor}x)
\end{equation}
exists almost everywhere.
\end{thm}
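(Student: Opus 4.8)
The plan is to convert the floor--polynomial averages over the commuting family $T_1,\dots,T_d$ into genuine polynomial ergodic averages with \emph{integer} exponents over a larger, but still \emph{abelian}, measure preserving system, and then to invoke Theorem~\ref{thm7} and Theorem~\ref{thm3-3}. Since a commuting family generates an abelian group, which is in particular nilpotent of step two, no increase of the nilpotency class is needed; the only genuine issue is that the exponents $\lfloor P_i(n)\rfloor$ are not polynomials in $n$. I would remove this by a suspension. Writing $P_i(n)=\sum_j a_{ij}n^{j}$, first observe the monomial fact: if, on $X\times\mathbb{T}$, one lets $\tilde T_{i,j}$ be the skew product over the rotation by $a_{ij}$ whose cocycle records the integer part gained, then $\tilde T_{i,j}^{\,m}(x,0)=(T_i^{\lfloor m a_{ij}\rfloor}x,\ m a_{ij}\bmod 1)$ for every $m\in\Z$, so that sampling at the integer polynomial $m=n^{j}$ produces $T_i^{\lfloor a_{ij}n^{j}\rfloor}$.

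Next I would assemble these on $Y=X\times\mathbb{T}^{K}$ (one circle per monomial), let the $\tilde T_{i,j}$ act on disjoint circle coordinates through powers of the single transformation $T_i$, and note that the whole family $\{\tilde T_{i,j}\}$ commutes (distinct circles, and the $T_i$ commute by hypothesis), hence generates an abelian group. The floor of a \emph{sum} is handled by the bounded integer correction $e_i(n)=\big\lfloor\sum_j\{a_{ij}n^{j}\}\big\rfloor$, which by the elementary identity $\lfloor\sum_j x_j\rfloor-\sum_j\lfloor x_j\rfloor=\lfloor\sum_j\{x_j\}\rfloor$ satisfies $\lfloor P_i(n)\rfloor=\sum_j\lfloor a_{ij}n^{j}\rfloor+e_i(n)$ and depends only on the torus coordinates of the orbit point. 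Folding this twist into the transfer function, I set $\tilde f(x,\mathbf s)=f\big(\prod_i T_i^{e_i(\mathbf s_i)}x\big)$, which is bounded, measurable, and an $L^2$-isometry, $\norm{\tilde f}_{L^2(Y)}=\norm{f}_{L^2(\mu)}$. Along the base fiber $\mathbf s=\mathbf 0$ one then gets exactly $\tilde f\big(\prod_{i,j}\tilde T_{i,j}^{\,n^{j}}(x,\mathbf 0)\big)=f\big(\prod_i T_i^{\lfloor P_i(n)\rfloor}x\big)$, so the averages on $X$ coincide with genuine integer--polynomial ergodic averages on $Y$ restricted to that fiber. Applying Theorem~\ref{thm7} and Theorem~\ref{thm3-3} to the abelian system $(Y,\{\tilde T_{i,j}\})$ with the integer polynomials $n^{j}$ then yields the maximal inequality and the almost everywhere convergence of these averages on all of $Y$.

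The main obstacle is the transference of these statements back to $X$, since the base fiber $\{\mathbf s=\mathbf 0\}$ is null in $Y$. A direct computation shows that for a general torus point $\mathbf u$ the averages $\E_n\tilde f\big(\prod_{i,j}\tilde T_{i,j}^{\,n^{j}}(x,\mathbf u)\big)$ equal $\E_n f\big(\prod_i T_i^{\lfloor P_i(n)+c_i(\mathbf u)\rfloor}V_{\mathbf u}x\big)$ for explicit constants $c_i(\mathbf u)$ and a fixed measure preserving map $V_{\mathbf u}$; thus Fubini delivers the maximal inequality and a.e. convergence for the shifted polynomials $P_i+c_i$ for \emph{almost every} shift, with a constant uniform in the shift. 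To descend to the prescribed polynomials (the fiber $\mathbf u=\mathbf 0$) I would combine three ingredients: the uniform maximal inequality just obtained; the Banach principle, which reduces a.e. convergence to a dense subclass; and an equidistribution comparison showing that $\lfloor P_i(n)\rfloor$ and $\lfloor P_i(n)+c_i\rfloor$ differ only on a set of $n$ of density $O(|c_i|)$ (using that $\{P_i(n)\}$ is well distributed whenever some coefficient of $P_i$ is irrational, the all--rational case being elementary and reducible to Theorem~\ref{BPET}). Letting the shifts tend to $0$ along a good sequence and using the maximal inequality to absorb the error then pins down the limit for the original averages. I expect this fiber--to--fiber descent, rather than the algebra of the suspension, to be the delicate part of the argument.
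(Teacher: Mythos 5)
Your construction is essentially the paper's: the paper also suspends to a torus extension (it uses $Y=X\times[0,1)^d$, one circle per transformation, with $S^{(t_1,\dots,t_d)}(x,\mathbf z)=(T_1^{\lfloor t_1+z_1\rfloor}\cdots T_d^{\lfloor t_d+z_d\rfloor}x,\{t_1+z_1\},\dots)$ rather than one circle per monomial), observes that $S^{(P_1(n),\dots,P_d(n))}=\prod_i S_i^{n^i}$ for commuting $S_i$, and then applies Theorem~\ref{thm7} and Theorem~\ref{thm3-3} on the extension; the final step in both arguments is a comparison between $\lfloor P_i(n)\rfloor$ and $\lfloor P_i(n)+z_i\rfloor$ via equidistribution of $\{P_i(n)\}$ as $z_i\to0$. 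So the algebra of your suspension is fine and matches the paper.

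The one step that would fail as written is your descent to the zero fiber for the \emph{maximal inequality}. You propose to obtain \eqref{ETEA} for almost every shift $c_i$ by Fubini and then invoke the Banach principle together with an equidistribution comparison to handle the shift $c_i=0$; but the Banach principle presupposes the maximal inequality for the operators you are actually studying (shift zero), so this is circular, and the equidistribution error is of size $\norm{f}_\infty\cdot O(\sum_i|c_i|)$, which cannot be absorbed for a general $f\in L^2$. The paper avoids this entirely by a positivity trick: for $f\ge0$ and any $\mathbf z$ in the open cube, $\lfloor P_i(n)\rfloor\in\{\lfloor P_i(n)+z_i\rfloor,\lfloor P_i(n)+z_i\rfloor-1\}$, whence
\begin{equation*}
\E_{n\in I_N}f\big(T_1^{\lfloor P_1(n)\rfloor}\cdots T_d^{\lfloor P_d(n)\rfloor}x\big)\le\sum_{(i_1,\dots,i_d)\in\{0,1\}^d}\E_{n\in I_N}\tilde f\big(S^{(P_1(n),\dots,P_d(n))}(T_1^{-i_1}\cdots T_d^{-i_d}x,\mathbf z)\big),
\end{equation*}
a pointwise domination valid for \emph{every} such $\mathbf z$; taking suprema, integrating in $\mathbf z$, and applying \eqref{eq38} gives \eqref{ETEA} with constant $4^dC$ and no limiting argument. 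Once \eqref{ETEA} is in hand, your descent for the a.e.\ convergence \eqref{ETEB} (reduce to $f\in L^\infty$, pick shifts $z_{i,k}\to0$ for which the extension averages converge a.e., and control the discrepancy by the density of $\{n:\{P_i(n)\}\in[1-z_{i,k},1)\}$, treating rational-coefficient polynomials separately) is exactly what the paper does. So you should replace the Banach-principle descent for the maximal inequality by the positivity domination; the rest of your plan goes through.
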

\begin{proof}
	Let $Y=X\times [0,1)^d$, $m$ be the Lebesgue measure on $[0,1)^d$ and $\mathcal{D}$ be the Borel $\sigma$-algebra of $[0,1)^d$.
	
	For any $t\in \R$, let $\vartheta(t)=t\ (\text{mod}\ 1)$. Define $S:\R^d\times Y\rightarrow Y$ by
	\begin{align*}
		& (t_1,\ldots,t_d,x,z_1,\ldots,z_d)\mapsto S^{(t_1,\ldots,t_d)}(x,z_1,\ldots,z_d)
		\\ & \hspace{5cm} :=(T_{1}^{\lfloor t_1+z_1\rfloor}\cdots T_{d}^{\lfloor t_d+z_d\rfloor}x,\vartheta(t_1+z_1),\ldots,\vartheta(t_d+z_d)).
	\end{align*}
	Then we get a measurable flow $(Y,\X\otimes \mathcal{D},\mu\times m,(S^{\bf t})_{{\bf t}\in \R^d})$. Let $f\in L^{2}(X,\mu)$ such that $f\ge 0$ almost everywhere. We define $\tilde{f}:Y\rightarrow \C$ by putting $(x,z_1,\ldots,z_d)\mapsto f(x)$ for $(\mu\times m)$-a.e. $(x,z_1,\ldots,z_d)\in Y$. For any $s\in (0,1)$, let $L_{0}(s)=[0,1-s)$ and $L_{1}(s)=[1-s,1)$. Then for any $N\in \N$ and $(\mu\times m)$-a.e. $(x,z_1,\ldots,z_d)\in Y\backslash (X\times \{(z_1,\ldots,z_d)\in [0,1)^d:z_{1}\cdots z_d=0 \})$, we have
	\begin{align}
		& \E_{n\in I_N}\tilde{f}(S^{(P_{1}(n),\ldots,P_{d}(n))}(x,z_1,\ldots,z_d))\notag
		\\ = &
		\E_{n\in I_N}f(T_{1}^{\lfloor P_{1}(n)+z_1\rfloor}\cdots T_{d}^{\lfloor P_{d}(n)+z_d\rfloor}x)\label{eq99}
		\\ = &
		\frac{1}{N}\sum_{(i_1,\ldots,i_d)\in\{0,1\}^d}\sum_{n\in I_N,\atop (\vartheta(P_{1}(n)),\ldots,\vartheta(P_{d}(n)))\in L_{i_1}(z_1)\times \cdots \times L_{i_d}(z_1)}f(T_{1}^{\lfloor P_{1}(n)\rfloor}\cdots T_{d}^{\lfloor P_{d}(n)\rfloor}(T_{1}^{i_1}\cdots T_{d}^{i_d}x))\notag.
	\end{align}
	By the fact that $f\ge 0$ almost everywhere and \eqref{eq99}, for any $N\in \N$ and $(\mu\times m)$-a.e. $(x,z_1,\ldots,z_d)\in Y\backslash (X\times \{(z_1,\ldots,z_d)\in [0,1)^d:z_{1}\cdots z_d=0 \})$, we have
	\begin{align}
		&  \sum_{(i_1,\ldots,i_d)\in\{0,1\}^d}\E_{n\in I_N}\tilde{f}(S^{(P_{1}(n),\ldots,P_{d}(n))}(T_{1}^{-i_1}\cdots T_{d}^{-i_d}x,z_1,\ldots,z_d)) \label{eq100}
		\\ & \hspace{7cm} \ge \E_{n\in I_N}f(T_{1}^{\lfloor P_{1}(n)\rfloor}\cdots T_{d}^{\lfloor P_{d}(n)\rfloor}x). \notag
	\end{align}
	
	Note that for any $n\in \Z$,
	\begin{equation}\label{eq120}
		S^{(P_{1}(n),\ldots,P_{d}(n))}=\prod_{i=1}^{D}S_{i}^{n^i},
	\end{equation}
	where $D=\max\{\deg P_1,\ldots,\deg P_2\}$ and $S_1,\ldots,S_D$ are invertible, commuting measure preserving transformations acting on $(Y,\B\otimes \mathcal{D},\mu\times m)$. By \eqref{eq100}, \eqref{eq120} and Theorem \ref{thm7}, we have
	\begin{align}
		& \norm{\sup_{N\ge 1}\Big|\E_{n\in I_N}f(T_{1}^{\lfloor P_{1}(n)\rfloor}\cdots T_{d}^{\lfloor P_{d}(n)\rfloor}x)\Big|}_{L^{2}(\mu)} \notag
		\\ \overset{\eqref{eq100}}\le &
		\sum_{(i_1,\ldots,i_d)\in\{0,1\}^d}\norm{\sup_{n\ge 1}\Big|\E_{n\in I_N}\tilde{f}(S^{(P_{1}(n),\ldots,P_{d}(n))}(T_{1}^{-i_1}\cdots T_{d}^{-i_d}x,z_1,\ldots,z_d))\Big|}_{L^{2}(\mu\times m)} \notag
		\\ \overset{\eqref{eq38}}\le &
		C\sum_{(i_1,\ldots,i_d)\in\{0,1\}^d}\norm{\tilde{f}((T_{1}^{-i_1}\cdots T_{d}^{-i_d}x,z_1,\ldots,z_d))}_{L^{2}(\mu\times m)}\label{eq101}
		\\ = &
		2^{d}C\norm{f}_{L^{2}(\mu)}, \notag
	\end{align} where $C$ is an absolute constant, depending on $d,P_1,\ldots,P_d$. By the linear property of ergodic averages, \eqref{eq101} can deduce \eqref{ETEA}.

By \eqref{ETEA}, we only need to prove that the limit in \eqref{ETEB} exists alomst everywhere for all $L^{\infty}(X,\mu)$-functions. Let $g\in L^{\infty}(X,\mu)$ with $\norm{g}_{L^{\infty}(X,\mu)}\le 1$ and we define $\tilde{g}:Y\rightarrow \C$ as we defined $\tilde{f}$ previously. Clearly, \eqref{eq99} holds for $g$ and $\tilde{g}$. By \eqref{eq120} and Theorem \ref{thm3-3}, the limit
$$\lim_{N\to\infty}\E_{n\in I_N}\tilde{g}(S^{(P_{1}(n),\ldots,P_{d}(n))}(x,z_1,\ldots,z_d))$$
exists $(\mu\times m)$-a.e. $(x,z_1,\ldots,z_d)\in Y$. So, there exist the sequences $\{z_{i,k}\}_{k\ge 1}\subset (0,1),1\le i\le d$ such that the following hold:
\begin{itemize}
	\item For each $1\le i\le d$, $z_{i,k}\to 0$ as $k\to\infty$.
	\item For $\mu$-a.e. $x\in X$ and each $k\ge 1$, the limit $$\lim_{N\to\infty}\E_{n\in I_N}\tilde{g}(S^{(P_{1}(n),\ldots,P_{d}(n))}(x,z_{1,k},\ldots,z_{d,k}))$$
	exists.
\end{itemize}
Then we have
\begin{align*}
	& \int_{X}\limsup_{N\to\infty}\sup_{N_1\ge N, \atop N_2\ge N}\Big|\E_{n\in I_{N_1}}g(T_{1}^{\lfloor P_{1}(n)\rfloor}\cdots T_{d}^{\lfloor P_{d}(n)\rfloor}x)-\E_{n\in I_{N_2}}g(T_{1}^{\lfloor P_{1}(n)\rfloor}\cdots T_{d}^{\lfloor P_{d}(n)\rfloor}x)\Big|d\mu(x)
	\\ \le &
	 2\limsup_{k\to\infty}\int_{X}\limsup_{N\to\infty}\Big|\E_{n\in I_{N}}g(T_{1}^{\lfloor P_{1}(n)\rfloor}\cdots T_{d}^{\lfloor P_{d}(n)\rfloor}x)
	 \\ & \hspace{5cm} -
	 \E_{n\in I_{N}}\tilde{g}(S^{(P_{1}(n),\ldots,P_{d}(n))}(x,z_{1,k},\ldots,z_{d,k}))\Big|d\mu(x)
	 \\ \le &
	 4(2^d-1)\limsup_{k\to\infty}\sum_{i=1}^{d}z_{i,k}\hspace{0.5cm}(\text{\eqref{eq99} and Weyl's equidistribution theorem})
	 \\ = &
	 0.
\end{align*}
By the above calculation, the limit $$\lim_{N\to\infty}\E_{n\in I_N}g(T_{1}^{\lfloor P_{1}(n)\rfloor}\cdots T_{d}^{\lfloor P_{d}(n)\rfloor}x)$$ exists almost everywhere.
The proof is complete.
\end{proof}

	 \bibliographystyle{plain}
	 \bibliography{ref}

\end{document}